\theoremstyle{plain}
\newcounter{intro}
\newtheorem{theoint}[intro]{Theorem}
\newcounter{nonumber}
\def\CC{\mathbb{C}}
\def\ZZ{\mathbb{Z}} 
\def\A{{\rm A}}
\def\B{{\rm B}}
\def\E{{\rm E}}
\def\F{{\rm F}}
\def\G{{\rm G}}
\def\H{{\rm H}}
\def\I{{\rm I}}
\def\J{{\rm J}}
\def\K{{\rm K}}
\def\L{{\rm L}}
\def\M{{\rm M}}
\def\N{{\rm N}}
\def\P{{\rm P}}
\def\Q{{\rm Q}}
\def\rS{{\rm S}}
\def\U{{\rm U}}
\def\V{{\rm V}}
\def\W{{\rm W}}
\def\Y{{\rm Y}}
\def\tG{{\widetilde{\G}}}
\def\AA{\mathfrak{A}}
\def\BB{\mathfrak{B}}
\def\HH{\mathfrak{H}}
\def\JJ{\mathfrak{J}}
\def\KK{\mathfrak{K}}
\def\MM{\mathfrak{M}}
\def\PP{\mathfrak{P}}
\def\WW{{\mathbf W}}
\def\Cc{\EuScript{C}}
\def\Gg{\mathscr{G}}
\def\Hh{\mathscr{H}}
\def\Ll{\mathscr{L}}
\def\Mm{\mathscr{M}}   %%%% !!!!
\def\Oo{\EuScript{O}}
\def\Pp{\mathscr{P}}
\def\Rr{\mathfrak{R}}
\def\b{\beta}
\def\e{\varepsilon}
\def\g{\gamma}
\def\k{\kappa}
\def\l{\lambda}
\def\p{\mathfrak{p}}
\def\s{\sigma}
\def\t{\tau}
\def\th{\theta}
\def\v{\upsilon}
\def\w{\varpi}
\def\y{\text{{\rm\Large\calligra y}\,}}
\def\ttau{\til\t}
\def\tth{\til\th}
\def\ee{\mathbf e}
\def\>{\geqslant}
\def\<{\leqslant}
\def\Hom{{\rm Hom}}
\def\End{{\rm End}}
\def\Aut{{\rm Aut}}
\def\GL{{\rm GL}}
\def\SO{{\rm SO}}
\def\O{{\rm O}}
\def\SL{{\rm SL}}
\def\Sp{{\rm Sp}}
\def\tr{{\rm tr}}
\def\Ind{{\rm Ind}}
\def\ss{\mathfrak s}
\def\La{\Lambda}
\def\Si{\Sigma}
\def\cInd{\hbox{{\rm c-Ind}}}
\def\so{{\mathsf o}}
\def\sm{{\mathsf m}}
\def\oe{{\fo_\E}}
\def\of{{\fo_\F}}
\def\pf{{\mathfrak p_\F}}
\def\fo{{\Oo}}
\def\ignore#1{\relax}
\def\til#1{{\widetilde{#1}}}
\def\bs#1{{\boldsymbol{#1}}}
\def\ov#1{{\overline{#1}}}
\def\({\left(}
\def\){\right)}
\def\lprime{{\ell}}
\author{Michitaka Miyauchi}
\address{Faculty of Liberal Arts and Sciences, Osaka Prefecture University,
1-1 Gakuen-cho Nakaku Sakai Osaka 599-8531 Japan} 
\email{michitaka.miyauchi@gmail.com}
\author{Shaun Stevens}
\address{School of Mathematics, University of East Anglia, Norwich Research Park,
  Norwich NR4 7TJ, United Kingdom}
\email{Shaun.Stevens@uea.ac.uk}
\title[Semisimple types for $p$-adic classical groups]{Semisimple types for $\bs p$-adic classical groups}
\begin{abstract}
We construct, for any symplectic, unitary or special orthogonal group over a 
locally compact nonarchimedean local field of odd residual characteristic, a 
type for each Bernstein component of the category of smooth representations, using
Bushnell--Kutzko's theory of covers. Moreover, for a component corresponding to 
a cuspidal representation of a maximal Levi subgroup, we prove that the Hecke 
algebra is either abelian, or a generic Hecke algebra on an infinite dihedral 
group, with parameters which are, at least in principle, computable via results 
of Lusztig.
\end{abstract}
\thanks{
This work was supported by EPSRC grants GR/T21714/01, EP/G001480/1 and EP/H00534X/1.
}
\begin{document}

\maketitle

\ \hfill{\today}

%\tableofcontents

%%%%%%%%%%%%%%%%%%%%%%%%%%%%%%%%%%%%%%%%%%%%%%%%%%%%%%%%%%%%%%%%%%%%%%%%

\section*{Introduction}

The study of the irreducible smooth (complex) representations
of~$p$-adic groups~$\G$ has seen much progress over the last fifty
years, inspired especially by the (local) Langlands programme. A basic
approach, due to Harish--Chandra, is: first classify all the
irreducible representations which do \emph{not} arise as quotients of
representations parabolically induced from representations of a proper
Levi subgroup (these are called \emph{cuspidal}); then classify all
quotients of representations parabolically induced from a
\emph{cuspidal} representation of a Levi subgroup. Because parabolic
induction does not preserve irreducibility, and because its
reducibility is related to the poles and zeros of L-functions, in
following this approach it is both necessary and interesting to study
the full (abelian) category of smooth representations~$\Rr(G)$.

A fundamental general result, for~$\G$ a connected reductive~$p$-adic
group, is the Bernstein decomposition~\cite{Be}, which
splits~$\Rr(G)$ into \emph{blocks} (indecomposable abelian
summands)~$\Rr^{\ss}(\G)$. These are indexed by (equivalence classes
of) pairs~$\ss=[\M,\tau]_\G$, with~$\M$ a Levi subgroup of~$\G$
and~$\tau$ a cuspidal irreducible representation of~$\M$, while the
irreducible objects in~$\Rr^{\ss}(\G)$ are precisely the irreducible
quotients of the parabolically induced representations
$\Ind_{\M,\P}^{\G}\tau\chi$,
for~$\P$ any parabolic subgroup with Levi factor~$\M$, and~$\chi$ any
character ($1$-dimensional representation) of~$\M$ trivial on every
compact subgroup (an \emph{unramified} character).

In~\cite{BK1}, Bushnell--Kutzko give a strategy for understanding any
block~$\Rr^{\ss}(\G)$: one seeks to construct a pair~$(\J,\l)$ (called
an~\emph{$\ss$-type}), consisting of a compact open subgroup~$\J$
of~$\G$ and an irreducible (smooth) representation~$\l$ of~$\J$, which
characterizes the block in the sense that the irreducible objects
in~$\Rr^{\ss}(\G)$ are exactly the irreducible representations~$\pi$
of~$\G$ such that~$\Hom_{\J}(\pi,\l)\ne 0$. (We say that~$\pi$
\emph{contains~$\l$}.) Then the block~$\Rr^{\ss}(\G)$ is equivalent to
the category of modules over the spherical Hecke
algebra~$\Hh(\G,\l)=\End_{\G}(\cInd_\J^\G\l)$, so we are reduced to
computing~$\Hh(\G,\l)$ and its modules. Moreover, Bushnell--Kutzko's theory of~\emph{covers}, which we recall below, gives a technique for trying to construct types (and their Hecke algebras) for general~$\M$ from those in the cuspidal case (that is, when~$\M=\G$). 

This programme has been carried out in its entirety for the
groups~$\GL_N$ and its inner forms, $\SL_N$, and, when the residual
characteristic~$p$ is odd,~$\U(2,1)$ and~$\Sp_4$. It has also been
completed for an arbitrary connected reductive group for~\emph{level
  zero} blocks, that is, for~$[\M,\tau]_\G$ where~$\tau$ contains the
trivial representation of the pro-$p$-radical of some parahoric
subgroup. For inner forms of~$\GL_N$, the Hecke algebras which arise
are all tensor products of generic Hecke algebras of type~$\A$;
for~$\SL_N$ one gets a similar algebra tensored with the group algebra
of a finite group, but twisted by a cocycle.

In this paper, we largely complete the programme for an arbitrary
classical group~$\G$ when the residual characteristic is odd. More
precisely, let~$\F_\so$ be a locally compact nonarchimedean local
field with residue field of odd cardinality~$q_\so$, and let~$\G$ be
the group of rational points of a symplectic, special orthogonal or
unitary group defined over~$\F_\so$. Our first main result is:

\begin{theoint}\label{thm:main}
Let $\M$ be a Levi subgroup of $\G$, let $\tau$ be a cuspidal
irreducible representation of $\M$, and put $\ss=[\M,\tau]_\G$. There
is an $\ss$-type $(\J,\l)$ which is, moreover, a cover of the
$\ss_\M$-type $(\J\cap\M,\l|\J\cap\M)$.
\end{theoint}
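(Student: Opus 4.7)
The strategy is to follow the Bushnell--Kutzko framework: first construct a cuspidal type $(\J_\M,\l_\M)$ for~$\tau$ in~$\M$, and then exhibit a $\G$-cover of this $\M$-type; the general theory of covers then ensures that such a cover is automatically an $\ss$-type. The construction is guided by the theory of semisimple characters for classical groups, developed to treat the cuspidal case $\M=\G$.

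First, I would decompose $\M$ according to the standard description of Levi subgroups of classical groups, writing
\[
\M\cong\GL_{m_1}(\E)\times\cdots\times\GL_{m_r}(\E)\times\G^-,
\]
where $\E$ is either $\F_\so$ or a quadratic extension of $\F_\so$, and $\G^-$ is a classical group of the same type as $\G$ but of smaller rank; correspondingly $\tau=\tau_1\otimes\cdots\otimes\tau_r\otimes\tau^-$. For each $\GL$-factor, Bushnell--Kutzko theory provides a simple type containing $\tau_i$; for the classical factor, the existing cuspidal theory in odd residual characteristic yields a type for $\tau^-$ built from a semisimple character $\th^-$ on a suitable compact subgroup. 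The exterior tensor product of these constituents is a cuspidal type $(\J_\M,\l_\M)$ for $\tau$ in $\M$.

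Next, I would assemble a semisimple stratum in the ambient Hermitian or symplectic space of $\G$ by gluing together the strata underlying the factor types, and associate to it a semisimple character $\th$ of $\G$ whose restriction to the analogous subgroup of $\M$ is the product of the factor characters. The compact open subgroup $\J\subset\G$ and the representation $\l$ would then be built from the usual ingredients of the theory: a Heisenberg representation of the Heisenberg $p$-group of $\th$, a beta-extension to a larger compact open subgroup, and the inflation of a suitable cuspidal representation of its finite reductive quotient. By construction $\J$ admits an Iwahori-style decomposition
\[
\J=(\J\cap\U^-_\P)(\J\cap\M)(\J\cap\U_\P)
\]
relative to any parabolic $\P$ with Levi factor $\M$, the restriction $\l|\J\cap\M$ recovers $\l_\M$, and $\l$ is trivial on the two unipotent pieces.

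It remains to verify that $(\J,\l)$ is a $\G$-cover of $(\J_\M,\l_\M)$. This reduces to exhibiting a strongly $(\P,\J)$-positive element $z$ in the centre of $\M$ -- which follows readily from the structure of the lattice sequence used to define $\J$ -- together with an invertibility condition for the Hecke algebra element supported on $\J z\J$. I expect the main obstacle to be this last point: the simple characters underlying the $\GL$-blocks and the semisimple character underlying the classical block interact via transfer maps between semisimple characters on different lattice sequences, and controlling how the intertwining of $\l$ crosses between $\GL$-blocks and the classical block will require delicate manipulation of beta-extensions and of the geometric choices made in attaching the stratum to the Levi. Once the covering property is established, that $(\J,\l)$ is indeed an $\ss$-type is a formal consequence of the general Bushnell--Kutzko machinery.
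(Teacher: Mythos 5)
Your proposal has the right general shape (a cover of a cuspidal type built from a semisimple character and $\b$-extension, with the covering property certified by an invertible Hecke element), but it contains a significant gap and, more importantly, runs in the wrong direction compared to the paper, in a way that matters.

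First, the direction. You propose to start from a cuspidal type $(\J_\M,\l_\M)$ in $\M$ and then build a $\G$-cover of it. The paper explicitly does \emph{not} do this and explains why: the representation $\k_\M$ that one needs on $\J\cap\M$ is not an arbitrary $\b$-extension in $\M$, but one satisfying delicate compatibility conditions coming from conjugation in the Levi $\L=\L_\b$ attached to the semisimple element $\b$. Starting from an arbitrary cuspidal type of $\M$, there is no obvious way to ensure these compatibilities; the paper sidesteps this by constructing $(\J_\P,\l_\P)$ directly inside $\G$ (from a self-dual semisimple stratum $[\La,n,0,\b]$ with $\b\in\Mm$) and then restricting to $\M$ to recover a cuspidal type, rather than the other way around. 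Your sentence ``assemble a semisimple stratum in the ambient space by gluing together the strata underlying the factor types'' is exactly where the difficulty lives: the direct sum of semisimple (even simple) strata on the blocks need not be semisimple, so such a gluing is not automatic. This existence statement is precisely Dat's result (Proposition~\ref{prop:dat} in the paper), which is a genuine input and cannot be waved through.

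Second, you omit the $\J^\so$ versus $\J$ subtlety. The reductive quotient $\J/\J^1$ is in general disconnected, and the argument of the paper first proves the covering property for the pair $(\J^\so_\P,\l^\so_\P)$ built from the inverse image of the connected component, and then invokes Morris's lemma (\cite[Lemma~3.9]{M3}) to pass to $(\J_\P,\l_\P)$. Without this intermediate step the intertwining and Hecke-algebra computations do not close up. Finally, the covering property itself is not established in one stroke as you suggest: the paper uses transitivity of covers through a chain of Levi subgroups $\M\subset\cdots\subset\L\subset\G$, peeling off $\GL$-blocks one at a time (sometimes in pairs in the even orthogonal case), with an intertwining argument at each stage and finite Hecke algebra embeddings at the final step. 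Your proposal correctly anticipates that ``controlling how the intertwining of $\l$ crosses between blocks will require delicate manipulation,'' but the plan as stated does not supply the mechanism; the transitivity-of-covers reduction and the Morris lemma are precisely that missing mechanism.
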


At present, we are only able to determine the Hecke algebra in the
case of a \emph{maximal} proper Levi subgroup (though see the comments
below for some implication in other cases); it turns out that the Hecke
algebras which arise are as for the group ${\rm Sp}_4(\F)$~\cite{BB2},
although there are more possibilities for the parameters. 
We also remark that this case
of a maximal Levi subgroup is the most interesting in terms of
implications on poles and zeros of L-functions; in particular, it is
possible to use the results here to compute explicitly the cuspidal
representations in an L-packet. 

\begin{theoint}\label{thm:maximal}
In the situation of Theorem~\ref{thm:main}, suppose moreover that~$\M$
is a maximal proper Levi subgroup of~$\G$ and write~$\N_\G(\ss_\M)$
for the set of~$g\in\G$ such that~$g$ normalizes~$\M$ and~${}^g\tau$
is equivalent to~$\tau\chi$, for some unramified character~$\chi$ of~$\M$.
\begin{enumerate}
\item If~$\N_\G(\ss_\M)=\M$ then the Hecke algebra~$\Hh(\G,\l)$ is
abelian, isomorphic to~$\CC\left[X^{\pm 1}\right]$.
\item If~$\N_\G(\ss_\M)\ne\M$ then the Hecke algebra~$\Hh(\G,\l)$ is a
generic Hecke algebra on an infinite dihedral group; that is, it is
generated by~$T_0,T_1$, each invertible and supported on a single
double coset, with relations
$$
(T_i-q_i)(T_i+1)=0,
$$
for some integer~$q_i\in q_\so^\ZZ$. 
\end{enumerate}
\end{theoint}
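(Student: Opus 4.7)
The plan is to combine the cover property from Theorem~\ref{thm:main} with the Bushnell--Kutzko theory of Hecke algebras for covers. Set~$\l_\M:=\l|\J\cap\M$; since~$(\J,\l)$ covers~$(\J\cap\M,\l_\M)$, there is an injective algebra homomorphism $t_\P\colon\Hh(\M,\l_\M)\hookrightarrow\Hh(\G,\l)$, functions in whose image are supported on~$\J\M\J$, and the support of~$\Hh(\G,\l)$ as a whole lies in~$\J\,\N_\G(\ss_\M)\,\J$. Because~$\M$ is a \emph{maximal} proper Levi subgroup, the lattice of unramified characters of~$\M$ modulo those trivial on the centre of~$\G$ has rank one; hence~$\Hh(\M,\l_\M)\simeq\CC[X^{\pm1}]$ and~$\N_\G(\ss_\M)/\M$ has order at most~$2$.

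In case~(i), $\N_\G(\ss_\M)=\M$ forces~$t_\P$ to be surjective, giving~$\Hh(\G,\l)\simeq\CC[X^{\pm1}]$ immediately. In case~(ii) pick $w\in\N_\G(\ss_\M)\setminus\M$; because~$\M$ is maximal, the conjugation action of~$w$ on the rank one lattice of unramified characters of~$\M$ is inversion. By direct intertwining of~$\l$ and~${}^w\l$ one shows that there is, up to a scalar, a unique nonzero $S\in\Hh(\G,\l)$ supported on~$\J w\J$, that~$S$ together with $t_\P(\Hh(\M,\l_\M))$ generates~$\Hh(\G,\l)$, and that conjugation by~$S$ sends a chosen generator $\Theta:=t_\P(X)$ to a scalar multiple of~$\Theta^{-1}$. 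Setting~$T_0:=S$ and~$T_1:=\Theta S$, each supported on a single double coset, then presents~$\Hh(\G,\l)$ as the generic Hecke algebra of the infinite dihedral group, provided the quadratic relations can be established.

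The main obstacle is establishing the quadratic relations $(T_i-q_i)(T_i+1)=0$ and showing~$q_i\in q_\so^\ZZ$. For this we exploit the explicit structure of~$(\J,\l)$ furnished by Theorem~\ref{thm:main}: $\l$ is induced from a $\b$-extension of a Heisenberg representation attached to a semisimple character. Mackey theory together with the intertwining formulas for semisimple characters (in the spirit of the~$\Sp_4$ computation of~\cite{BB2}) reduces the computation of~$T_i^2$ to the computation of the endomorphism algebra of a cuspidal representation of a maximal Levi~$\ov\M_i$ inside a finite reductive quotient~$\ov\G_i$ arising from a suitable parahoric subgroup. Lusztig's theorem on endomorphism algebras of cuspidal representations of finite classical groups then yields the quadratic relation, with~$q_i$ equal to an index of a maximal parabolic in~$\ov\G_i$, hence a power of~$q_\so$; invertibility of each~$T_i$ follows from~$q_i\ne 0$. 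The bulk of the work lies in carrying out this parahoric reduction uniformly across the possible semisimple character data, the two possible~$\b$-extension choices at each block, and the various quadratic-form signatures that occur in the symplectic, unitary and orthogonal cases.
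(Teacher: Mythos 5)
Your case~(i) is fine, and your overall strategy for case~(ii) (reduce the quadratic relations to finite Hecke algebras via parahoric reduction, then invoke Lusztig/Howlett--Lehrer) is the right one. But the specific algebraic set-up for producing the two generators has a genuine gap.

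The formula~$T_1:=\Theta S$ does \emph{not} give an element supported on a single double coset. In the affine picture, if~$\Theta$ is supported on~$\J z\J$ with~$z$ a strongly positive generator (corresponding to a length-two translation, say~$s_1 s_0$) and~$S$ is supported on~$\J s_0\J$, then~$\Theta S$ corresponds to~$T_{s_1 s_0} T_{s_0}$, and since~$\ell(s_1 s_0 \cdot s_0) < \ell(s_1 s_0) + \ell(s_0)$, the product spreads out over two double cosets: in the generic Hecke algebra one computes~$T_{s_1 s_0}T_{s_0}=(q-1)T_{s_1 s_0}+q\,T_{s_1}$. The combination that does collapse is~$\Theta S^{-1}$, but~$S^{-1}$ is itself a linear combination of~$S$ and~$1$, and verifying that~$\Theta S^{-1}$ has single-coset support requires already knowing the quadratic relation on~$S$ --- so this route is circular. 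For the same reason the claim that conjugation by~$S$ sends~$\Theta$ to a scalar multiple of~$\Theta^{-1}$ is also false in general; what holds is a Bernstein--Lusztig-type relation involving the parameter, not a clean conjugation.

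The way the paper avoids this is to construct~$T_0$ and~$T_1$ \emph{independently}, via two injective Hecke algebra maps $\Hh(\Gg_t,\rho_\M^\so\chi_t)\hookrightarrow\Hh(\G,\l_\P^\so)$ for~$t=0,1$, one for each of the two (conjugacy classes of) maximal self-dual~$\oe$-orders~$\AA(\MM_{t,\oe})$ containing the minimal one~$\AA(\La_\oe)$. Each~$T_t$ is then honestly supported on~$\J_\P^\so s_t\J_\P^\so$ and satisfies a quadratic relation coming from the finite group~$\Gg_t$. The product~$T_0T_1$ then lands on the translation double coset and identifies with the image of the strongly positive element of~$\Hh(\M,\l_\M^\so)$; this is how one sees that~$T_0,T_1$ generate and that no further relations occur, following [BB2]. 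Your proposal should construct the second generator this way rather than algebraically from~$\Theta$ and~$S$.

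Two further points you pass over, which occupy much of the paper's case analysis: first, the parahoric subgroup~$\P(\La_\oe)$ and the quotients~$\P(\MM_{t,\oe})/\P_1(\MM_{t,\oe})$ need not be connected, and one must track the element~$p_\La$ generating~$\P(\La^\lprime_{\oe})/\P^\so(\La^\lprime_{\oe})$ and distinguish the cases~$s_t\in\P^\so(\MM_{t,\oe})$ from~$s_t\notin\P^\so(\MM_{t,\oe})$, where in the latter case the quadratic relation may degenerate to~$T_t^2=1$. Second, the two finite representations fed into Lusztig's theorem are~$\rho_\M^\so\chi_0$ and~$\rho_\M^\so\chi_1$ where~$\chi_t$ is a twist coming from the two possible standard~$\b$-extensions; one of these is trivial but the other is a genuinely nontrivial quadratic character, and its computation is precisely what makes the explicit determination of the parameters subtle. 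Finally, the paper works throughout with~$\l_\P^\so$ and only at the end lifts the Hecke algebra isomorphism from~$\Hh(\G,\l_\P^*)$ to~$\Hh(\G,\l_\P)$ through a chain of index-two extensions; your proposal does not address this step.
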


Moreover, in~\S\ref{S.hecke}, we give a recipe which reduces the
calculation of the parameters~$q_i$ in this Hecke algebra to the
computation of a certain quadratic character (which is sometimes known
to be trivial) and of the parameters in two finite Hecke algebras,
which are computable through the work of Lusztig~\cite{L}. We explore
certain cases of this further in work in progress%~\cite{BHS,LS}
, though we emphasise that
the computation of the quadratic character appears, in general, to be
a very subtle matter: see the work of Blondel~\cite{Bl} for more on this.

We also remark that, for symplectic groups, the propagation results of Blondel~\cite{Bl0} together with our Theorem~\ref{thm:maximal} now give the Hecke algebra when~$\M\simeq\GL_r(\F)^s\times\Sp_{2N}(\F)$ and~$\tau=\til\tau^{\otimes s}\otimes\tau_0$. Whether the results there and here could be pushed to give a description of the Hecke algebra in the general case is not clear.

\medskip

We now describe the proofs so we suppose we are in the situation of
Theorem~\ref{thm:main}. The class~$\ss=[\M,\tau]_\G$ determines a (cuspidal) class~$\ss_\M=[\M,\tau]_\M$ for~$\M$, which gives us a block~$\Rr^{\ss_\M}(\M)$ of the category of smooth representations of~$\M$. An~$\ss_\M$-type~$(\J_\M,\l_\M)$ was constructed by the second author in~\cite{S5} (though we take the opportunity here to correct some inaccuracies -- see~\S\ref{S.cuspidal}). We say that a pair~$(\J,\l)$ is \emph{decomposed} over~$(\J_\M,\l_\M)$ if, for any parabolic subgroup~$\P=\M\U$ with Levi factor~$\M$,
\begin{enumerate}
\item $\J$ has an Iwahori decomposition with respect to~$(\M,\P)$ and~$\J\cap\M=\J_\M$; and
\item $\l$ restricts to~$\l_\M$ on~$\J_\M$, and to a multiple of the trivial representation on~$\J\cap\U$.
\end{enumerate}
If a further technical condition on the Hecke algebra~$\Hh(\J,\l)$ is satisfied (it contains an invertible element supported only on the double coset of a strongly positive element of the centre of~$\M$) then~$(\J,\l)$ is a \emph{cover} of~$(\J_\M,\l_\M)$, in which case it is also an~$\ss$-type. Moreover, one gets an embedding of Hecke algebras~$\Hh(\M,\l_\M)\hookrightarrow\Hh(\G,\l)$ and, in certain circumstances, one can also deduce the rank (and other structure) of~$\Hh(\G,\l)$ as an~$\Hh(\M,\l_\M)$-module.

To construct a cover, we do \emph{not} in fact start with the type~$(\J_\M,\l_\M)$ but rather construct~$(\J,\l)$ directly, then observing that it is a cover of its restriction to~$\M$, which is indeed an~$\ss_\M$-type. To this end, the starting point is a result of
Dat~\cite{Dat}, building on work of the second author in~\cite{S4}. In
the latter paper, so-called \emph{semisimple characters} of certain
compact open subgroups of~$\G$ were constructed, generalizing
constructions of Bushnell--Kutzko~\cite{BK}. These come in families
indexed by a semisimple element~$\b$ of the Lie algebra of~$\G$ and a
lattice sequence~$\La$, which can be interpreted as a point in the
building of the centralizer~$\G_\b$ of~$\b$ via~\cite{BrS}. 

Dat proved that, given~$\ss=[\M,\tau]_\G$ as above, there is
a \emph{self-dual semisimple character}~$\th$ of a compact open
subgroup~$\H^1$ of~$\G$ such that~$(\H^1,\th)$ is a decomposed pair
over~$(\H^1\cap\M,\th|_{\H^1\cap\M})$ and~$\tau$
contains~$\th|_{\H^1\cap\M}$. There is considerable flexibility here;
in particular, the associated lattice sequence~$\La$ may be chosen so
that the parahoric subgroup it defines in~$\G_\b$ (that is, the
stabilizer of the point it defines in the building) also has an
Iwahori decomposition with respect to any parabolic subgroup with Levi
factor~$\M$. (Indeed, this is generically the case.) The element~$\b$ also 
has a Levi subgroup~$\L$ attached to it (the minimal Levi subgroup containing~$\G_\b$) and we have~$\M\subseteq\L$.

Our first task is to extend the constructions of~\cite{S4,S5} to the self-dual case, in particular the notion of \emph{(standard)~$\b$-extension~$\k$} and its realization as an induced representation~$\Ind_{\J_\P}^\J\k_\P$, for~$\P$ a parabolic subgroup with Levi component~$\M$. The main property here is that the representation~$\k_\M:=\k_\P|_{\J_\P\cap\M}$ of~$\J_\M:=\J_\P\cap\M$ is a (standard)~$\b$-extension in~$\M$, in the sense of~\cite{S5}, with extra compatibility properties coming from conjugation in~$\L$; indeed, it is ensuring these compatibilities which would make it difficult to start with a type in~$\M$ and build a cover from it. %Moreover, the pair~$(\J_\P,\k_\P)$ is decomposed over~$(\J_\M,\k_\M)$. 

Now our cuspidal representation~$\tau$ of~$\M$ contains a representation of~$\J_\M$ of the form~$\l_\M=\k_\M\otimes\rho_\M$, for~$\rho_\M$ the inflation of a cuspidal representation of the (possibly disconnected) finite reductive quotient~$\J_\M/\J_\M^1$, and~$(\J_\M,\l_\M)$ is an~$\ss$-type. Since~$\J_\P/\J_\P^1\simeq\J_\M/\J_\M^1$, we can also form the representation~$\l_\P=\k_\P\otimes\rho_\M$ and the claim is then that~$(\J_\P,\l_\P)$ is a cover of~$(\J_\M,\l_\M)$. There is a small but important subtlety here: it is in fact the inverse image~$\J_\M^\so$ of the connected component of~$\J_\M/\J_\M^1$ that we work with, along with a representation~$\l_\M^\so=\k_\M\otimes\rho_\M^\so$ contained in~$\l_\M$, and we prove that~$(\J^\so_\P,\l^\so_\P)$ is a cover of~$(\J^\so_\M,\l^\so_\M)$. That~$(\J_\P,\l_\P)$ is also a cover follows from a result of Morris: this phenomenon already arises for level zero representations.

The proof uses transitivity of covers, showing that~$(\J_\P^\so,\l_\P^\so)$ is a cover of~$(\J_\P^\so\cap\M',\l_\P^\so|_{\J_\P^\so\cap\M'})$ for a chain of Levi subgroups~$\M'$ ending with~$\M$. The first step is with~$\M'=\L$, which is straightforward by consideration of intertwining; indeed, the embedding of Hecke algebras in this case is an isomorphism. This reduces us to the case~$\L=\G$, which is the case of a~\emph{skew} semisimple character considered in~\cite{S5}, and the rest of the argument is essentially contained there. By intertwining arguments, we reduce to the case in which there is no proper Levi subgroup of~$\G$ containing the normalizer of~$\rho_\M^\so|_{\J_\M^\so}$. Finally, we pull off the remaining blocks of~$\M$ one at a time; that is, we go in steps with~$\M'=\GL_r(\F)\times\G^0$ a maximal proper Levi subgroup of~$\G$ containing~$\M=\GL_r(\F)\times\M^0$, with~$\M^0$ a Levi subgroup of the classical group~$\G^0$. (In the case of even special orthogonal groups we must sometimes remove blocks in pairs.)

The final step is achieved by producing Hecke algebra embeddings~$\Hh(\Gg_i,\rho_\M^\so\chi_i)\hookrightarrow\Hh(\G,\l_\P^\so)$, for~$i=0,1$, where~$\Gg_i$ is a finite reductive group having~$\J_\M^\so/\J_\M^1$ as a maximal proper Levi subgroup, and~$\chi_i$ is a quadratic character. Each of these finite Hecke algebras is two-dimensional, generated by an element~$T_i$ which is supported on a single double-coset and satisfies a quadratic relation. It is a power of the product of the images of~$T_i$ in~$\Hh(\G,\l_\P^\so)$ which gives the required invertible element of the Hecke algebra. 

In the case that~$\M$ is maximal and~$\N_\G(\ss_\M)\ne\M$, the same argument allows one to describe the Hecke algebra of the cover completely: the images of the two embeddings together generate~$\Hh(\G,\l_\P^\so)$ and there are no further relations by support considerations. Again, there are some additional complications arising from the fact that the finite groups~$\Gg_i$ (which are the reductive quotients of non-connected parahoric subgroups in~$\G_\b$) need not be connected; some care is needed in dealing with these.

\medskip

Finally we summarize the contents of the various sections. The basic objects involved in the construction are recalled in section~\ref{S.notation}, while section~\ref{S.characters} extends the various constructions from the skew case in~\cite{S5} to the case of a self-dual semisimple character. In section~\ref{S.cuspidal} we recall the construction of types in the cuspidal case (and make some corrections), before constructing the cover and proving Theorem~\ref{thm:main} in section~\ref{S.covers}. Finally, the computation of the Hecke algebra is given in section~\ref{S.hecke}.

\subsection*{Acknowledgements} 
This paper has been a long time coming. The second author would like to thank Muthu Krishnamurthy for asking the question which prompted him to get on and finish it. He would also like to thank Laure Blasco and Corinne Blondel for point out some mistakes in~\cite{S5} and especially Corinne Blondel for many useful discussions.

%%%%%%%%%%%%%%%%%%%%%%%%%%%%%%%%%%%%%%%%%%%%%%%%%%%%%%%%%%%%%%%%%%%%%%%%

\section{Notation and preliminaries}\label{S.notation}

Let $\F$ be a nonarchimedean locally compact field of odd residual
characteristic. Let $\l\mapsto \overline{\l}$ denote a (possibly
trivial) galois involution on $\F$ with fixed field $\F_\so$. For $\K$ a
finite extension of $\F_\so$, we denote by $\Oo_{\K}$ its ring of
integers, by $\p_{\K}$ the maximal ideal of $\Oo_{\K}$ and by $k_\K$
its residue field. We also denote by $e(\K/\F_\so)$ and $f(\K/\F_\so)$ the
ramification index and residue class degree of $\K/\F_\so$
respectively, and put $\e_\F=(-1)^{e(\F/\F_\so)+1}$.

We fix $\w_\F$ a uniformizer of $\F$ such that
$\overline{\w_\F}=\e_\F\w_\F$, and put
$\w_\so=\w_\F^{e(\F/\F_\so)}$, a uniformizer of $\F_\so$. We also fix
$\psi_\so$, a character of the additive group of $\F_\so$ with
conductor $\mathfrak p_{\F_\so}$; then we put 
$\psi_\F=\psi_\so\circ\tr_{\F/\F_\so}$, a character of the additive
group of $\F$ with conductor $\pf$. We also denote by~$f\mapsto\ov f$
the involution induced on the polynomial ring~$\F[X]$.

For $u$ a real number, we denote by $\lceil{u}\rceil$ the smallest
integer which is greater than or equal to $u$, and by
$\lfloor{u}\rfloor$ the greatest integer which is smaller than or
equal to $u$, that is, its integer part.

All representations considered here are smooth and complex.

The material of this section is essentially a summary of necessary
definitions and basic results. More details can be found
in~\cite{BK,S4}.

%%%%%%%%%%%%%%%%%%%%%%%%%%%%%%%%%%%%%%%%%%%%%%%%%%%%%%%%%%%%%%%%%%%%%%%%
\subsection{}
Let $\e=\pm 1$ and let $\V$ be a finite-dimensional $\F$-vector space
equipped with a nondegenerate $\e$-hermitian form $h$: thus
$$
\l h(v,w)\ =\ h(\l v,w)\ =\ \e\overline{h(w,\l v)},\qquad v,w\in\V,\ \l\in\F.
$$
Put $\A=\End_\F(\V)$, an $\F$-split simple central $\F$-algebra
equipped with the adjoint anti-involution $a\mapsto \overline a$
defined by
$$
h(av,w) = h(v,\overline a w),\qquad v,w\in\V;
$$
this anti-involution coincides with the galois involution on the
naturally embedded copy of $\F$ in $\A$.

%%%%%%%%%%%%%%%%%%%%%%%%%%%%%%%%%%%%%%%%%%%%%%%%%%%%%%%%%%%%%%%%%%%%%%%%
\subsection{}
Set~$\tG=\Aut_\F(\V)$ and let~$\s$ be the involution given
by~$g\mapsto \overline g^{-1}$, for~$g\in\tG$. We also have an action
of~$\s$ on the Lie algebra~$\A$ given by~$a\mapsto-\overline a$,
for~$a\in\A$. We put~$\Si=\{1,\s\}$, where~$1$ acts as the identity on
both~$\tG$ and~$\A$.

Put $\G^+=\tG^\Sigma=\{g\in\tG:h(gv,gw)=h(v,w)$ for all $v,w\in V\}$,
the $\F_\so$-points of a unitary, symplectic or orthogonal group $\bs\G^+$ 
over $\F_\so$. Let $\G$ be the $\F_\so$-points of the connected component 
$\bs\G$ of $\bs\G^+$, so that $\G=\G^+$ except in the orthogonal case. 
Put $\A_-=\A^\Sigma$, the Lie algebra of $\G$. In general, for 
$\rS$ a subset of $\A$, we will write $\rS_-$ or $\rS^-$ for
$\rS\cap\A_-$, and, for $\til\H$ a subgroup of $\tG$, we will write
$\H$ for $\til\H\cap\G$. 

If~$\F=\F_\so$,~$\e=+1$,~$\dim_\F\V=2$ and~$h$ is isotropic,
then~$\G\simeq\SO(1,1)(\F)\simeq\GL_1(\F)$ so is
well-understood. Consequently, we exclude this case. In particular,
the centre of~$\G^+$ is the naturally embedded copy
of~$\F^1:=\{\l\in\F:\l\ov\l=1\}$, which is compact.

%%%%%%%%%%%%%%%%%%%%%%%%%%%%%%%%%%%%%%%%%%%%%%%%%%%%%%%%%%%%%%%%%%%%%%%%
\subsection{}
An \emph{$\Oo_{\F}$-lattice sequence} on $\V$ is a map 
$$
\La:\ZZ \to \{\hbox{$\Oo_\F$-lattices in $\V$}\}
$$ 
which is decreasing (that is, $\La(k)\supseteq\La(k+1)$ for all
$k\in\ZZ$) and such that there exists a positive integer
$e=e(\La|\Oo_\F)$ satisfying $\La(k+e)=\p_\F\La(k)$, for all $k\in\ZZ$. 
This integer is called the \emph{$\Oo_\F$-period} of $\La$.
If $\La(k)\supsetneq\La(k+1)$ for all $k\in\ZZ$, then the lattice
sequence $\La$ is said to be \emph{strict}. If
$\dim_{k_\F}\La(k)/\La(k+1)$ is independent of $k$, we say that the
lattice sequence is \emph{regular}.

Associated with an $\Oo_{\F}$-lattice sequence $\La$ on $\V$, we have an
$\Oo_{\F}$-lattice sequence on $\A$ defined by
$$
k\mapsto\PP_{k}(\La)=\{a\in\A : a\La(i)\subseteq\La(i+k),\ i\in\ZZ\},
\quad k\in\ZZ.
$$
The lattice $\AA(\La)=\PP_0(\La)$ is a hereditary $\Oo_\F$-order in $\A$, 
and $\mathfrak{P}(\La)=\PP_1(\La)$ is its Jacobson radical;
these two lattices depend only on the set $\{\La(k): k\in\ZZ\}$. 

\medskip

We denote by $\KK(\La)$ the \emph{$\til\G$-normalizer} of $\La$: 
that is, the subgroup of $\til\G$ made of all elements 
$g$ for which there is an integer $n\in\ZZ$ such that 
$g(\La(k))=\La(k+n)$ for all $k\in\ZZ$.
Given $g\in\KK(\La)$, such an integer 
is unique: it is denoted 
$\v_{\La}(g)$ and called the \emph{$\La$-valuation} of $g$. 
This defines a group homomorphism $\v_{\La}$ from $\KK(\La)$ to $\ZZ$.
Its kernel, denoted $\til\P(\La)$, is the group of invertible elements 
of $\AA(\La)$. 
We set $\til\P_0(\La)=\til\P(\La)$ and, for $k\>1$, we set 
$\til\P_k(\La)=1+\PP_k(\La)$.

%%%%%%%%%%%%%%%%%%%%%%%%%%%%%%%%%%%%%%%%%%%%%%%%%%%%%%%%%%%%%%%%%%%%%%%%
\subsection{}
Given $\La$ an $\of$-lattice sequence, the \emph{affine class} of
$\La$ is the set of all $\Oo_{\F}$-lattice sequences on $\V$ of the form:
$$
a\La+b:k\mapsto\La(\lceil(k-b)/a\rceil),
$$
with $a,b\in\ZZ$ and $a\>1$. The $\Oo_\F$-period of $a\La+b$ is $a$
times the period $e(\La|\Oo_\F)$ of $\La$. Note that
$$
\PP_k(a\La+b)=\PP_{\lceil k/a\rceil}(\La)
$$
so that changing $\La$ in its affine class only changes $\PP_k(\La)$
in its affine class, indeed only by a scale in the indices; similarly,
$\til\P_k(\La)$ is only changed by a scale in the indices, while
$\KK(a\La+b)=\KK(\La)$. 

%%%%%%%%%%%%%%%%%%%%%%%%%%%%%%%%%%%%%%%%%%%%%%%%%%%%%%%%%%%%%%%%%%%%%%%%
\subsection{}
We call an $\of$-lattice sequence $\La$ \emph{self-dual\/} if there 
exists $d\in\ZZ$, such that $\{v\in\V:h(v,\La(k))\subseteq\pf\}=\La(d-k)$ 
for all $k\in\ZZ$. By changing a self-dual $\of$-lattice 
sequence in its affine class, we may and do normalize all self-dual
lattice sequences so that $d=1$ and $e(\La|\of)$ is even. 

For $\La$ a self-dual lattice sequence, the $\of$-lattices
$\PP_k(\La)$ are stable under the involution $\s$ (on
$\A$). Similarly, the groups $\til\P_k$ are fixed by $\s$ (on
$\til\G$) and we put $\P^+=\P^+(\La)=\til\P\cap \G^+$, a compact
open subgroup of $\G^+$, and $\P=\P(\La)=\P^+\cap \G$. We have a
filtration of $\P(\La)$ by normal subgroups
$\P_k=\P_k(\La)=\til\P_k^\Sigma=\til\P_k\cap \G$, for $k>0$.
 We also have, for $k>0$, a bijection $\PP_k^-(\La) \to 
\P_k$ given by the Cayley map $x\mapsto (1+\frac x2)(1-\frac
x2)^{-1}$, which is equivariant under conjugation by $\P$.

The quotient group $\Gg=\P/\P_1$ is (the group of rational points
of) a reductive group over the finite field $k_{F_\so}$. However, it is
not, in general, connected. We denote by $\P^\so=\P^\so(\La)$ the
inverse image in $\P$ of (the group of rational points
of) the connected component $\Gg^\so$ of $\Gg$; then $\P^\so$ is a
parahoric subgroup of $\G$.

%%%%%%%%%%%%%%%%%%%%%%%%%%%%%%%%%%%%%%%%%%%%%%%%%%%%%%%%%%%%%%%%%%%%%%%%
\subsection{}
A \emph{stratum} in $\A$ is a quadruple $[\La,n,m,\b]$ made of an 
$\Oo_\F$-lattice sequence $\La$ on $\V$, 
two integers $m,n$ such that $0\<m\<n$, and an element 
$\b\in\PP_{-n}(\La)$.
Two strata $[\La,n,m,\b_i]$, for $i=1,2$, in $\A$ are said to be
\emph{equivalent} if $\b_2-\b_1\in\PP_{-m}(\La)$. A stratum
$[\La,n,m,\b]$ is called \emph{null} if it is equivalent to
$[\La,n,m,0]$, that is, if $\b\in\PP_{-m}(\La)$.

A stratum
$[\La,n,m,\b]$ is called \emph{self-dual\/} if $\Lambda$ is self-dual and
$\b\in \A_-$. (Note that this notion has been called \emph{skew} in
previous papers; here we reserve the term skew for a more precise
situation -- see~\S\ref{S.characters}.)

For $n\ge m\ge \frac n2>0$, an equivalence class of 
strata corresponds to a character of $\til\P_{m+1}(\La)$, by
$$
[\La,n,m,\b]\mapsto (\til\psi_\b:x\mapsto\psi_\F\circ\tr_{\A/\F}(\b(x-1)),
\hbox{ for }x\in \til\P_{m+1}(\La)),
$$
while an equivalence class of self-dual strata corresponds to a
character of $\P_{m+1}(\La)$, by
$$
[\La,n,m,\b]\mapsto \psi_\b=\til\psi_\b|_{\P_{m+1}(\La)}.
$$
A null stratum corresponds to the trivial character.

%%%%%%%%%%%%%%%%%%%%%%%%%%%%%%%%%%%%%%%%%%%%%%%%%%%%%%%%%%%%%%%%%%%%%%%%
\subsection{} 
For $[\La,n,m,\b]$ a stratum in $\A$, we set
$$
\y=\y(\b,\La)=\varpi_\F^{n/g}\b^{e/g},
$$
where $e=e(\La|\Oo_\F)$ and $g=\gcd(n,e)$. The
characteristic polynomial of $\y+\PP_1(\La)$ (considered as an element
of $\AA(\La)/\PP_1(\La)$) is called the \emph{characteristic
polynomial $\varphi_\b(X)\in k_\F[X]$ of the stratum
$[\La,n,m,\b]$}. The stratum $[\La,n,m,\b]$ is said to be \emph{split}
if $\varphi_\b(X)$ has (at least) two distinct irreducible factors.

If $[\La,n,m,\b]$ is self-dual then we have $\ov\y=\e_\b\y$, where
$\e_\b=\e_\F^{n/g}(-1)^{e/g}$, and thus
$\varphi_\b(X)=\ov\varphi_\b(\e_\b X)$. We say that the stratum is
\emph{$\G$-split} if $\varphi_\b(X)$ has an irreducible factor
$\psi(X)$ such that $\psi(X),\ov\psi(\e_\b X)$ are coprime.

%%%%%%%%%%%%%%%%%%%%%%%%%%%%%%%%%%%%%%%%%%%%%%%%%%%%%%%%%%%%%%%%%%%%%%%%
\subsection{}
Let $\E$ be a finite extension of $\F$ contained in $\A$. An
$\Oo_\F$-lattice sequence $\La$ on $\V$ is said to be 
\emph{$\E$-pure} if it is normalized by $\E^{\times}$, in which case
it is also an $\Oo_\E$-lattice sequence. Denote by $\B=\End_\E(\V)$
the centralizer of $\E$ in $\A$ and by $\La_{\Oo_\E}$ the lattice
sequence $\La$ considered as an $\Oo_\E$-lattice sequence.

%%%%%%%%%%%%%%%%%%%%%%%%%%%%%%%%%%%%%%%%%%%%%%%%%%%%%%%%%%%%%%%%%%%%%%%%
\subsection{}
Given a stratum $[\La,n,m,\b]$ in $\A$, we denote by $\E$ the 
$\F$-algebra generated by $\b$. 
This stratum is said to be \emph{pure} if $\E$ is a field, 
if $\La$ is $\E$-pure and if $\v_{\La}(\b)=-n$.
Given a pure stratum $[\La,n,m,\b]$, we denote by $\B$ the centralizer 
of $\E$ in $\A$.
For $k\in\ZZ$, we set:
\begin{equation*}
\mathfrak{n}_k(\b,\La)=\{x\in\AA(\La)\ |\ \b x-x\b\in\PP_k(\La)\}.
\end{equation*}
The smallest integer $k\>\v_{\La}(\b)$ such that $\mathfrak{n}_{k+1}(\b,\La)$ 
is contained in $\AA(\La)\cap\B+\PP(\La)$ is called the 
\emph{critical exponent} of the stratum $[\La,n,m,\b]$, denoted 
$k_0(\b,\La)$.

The stratum $[\La,n,m,\b]$ is said to be \emph{simple} if it is pure
and if we have $m<-k_0(\b,\La)$. 

Given $n\> 0$ and $\La$ an $\Oo_\F$-lattice sequence, there is another
stratum which plays a very similar role to simple strata, namely the
\emph{zero stratum} $[\La,n,n,0]$. (Note that this was called a null
stratum in~\cite{S4,S5}.)

%%%%%%%%%%%%%%%%%%%%%%%%%%%%%%%%%%%%%%%%%%%%%%%%%%%%%%%%%%%%%%%%%%%%%%%%
\subsection{} 
Let $[\La,n,m,\b]$ be a stratum in $\A$ and suppose we have a
decomposition $\V=\bigoplus_{i\in\I} \V^i$ into $\F$-subspaces. Let
$\La^i$ be the lattice sequence on $\V^i$ given by $\La^i(k)=\La(k)\cap\V^i$
and put $\b_i= \ee^i\b\ee^i$, where $\ee^i$ is the projection
onto $\V^i$ with kernel $\bigoplus_{j\ne i}\V^j$. We use the block notation
$\A^{ij}=\Hom_\F(\V^j,\V^i)$.

We say that $\V=\bigoplus_{i\in\I}\V^i$ is a \emph{splitting\/} for
$[\La,n,m,\b]$ if $\La(k)=\bigoplus_{i\in\I} \La^i(k)$, for all
$k\in\ZZ$, and $\b=\sum_{i\in\I}\b_i$.

Suppose $\V=\bigoplus_{i\in\I}\V^i$ and $\V=\bigoplus_{j\in\J}\W^j$ are two
decompositions of $\V$. We say that $\bigoplus_{i\in\I}\V^i$ is a
\emph{refinement} of $\bigoplus_{j\in\J}\W^j$ (or
$\bigoplus_{j\in\J}\W^j$ is a \emph{coarsening} of
$\bigoplus_{i\in\I}\V^i$) if, for each $i\in\I$, there exists $j\in\J$
such that $\V^i\subseteq\W^j$.

%%%%%%%%%%%%%%%%%%%%%%%%%%%%%%%%%%%%%%%%%%%%%%%%%%%%%%%%%%%%%%%%%%%%%%%%
\subsection{}
A stratum $[\La,n,m,\b]$ in $\A$ is called \emph{semisimple\/} if
either it is a zero stratum or $\b\not\in\PP_{1-n}(\La)$ and there is a splitting 
$\V=\bigoplus_{i\in\I}\V^i$ for the stratum such that 
\begin{enumerate}
\item for $i\in\I$, $[\La^i,q_i,m,\b_i]$ is a
simple or zero stratum in $A^{ii}$, where $q_i=m$ if $\b_i=0$,
$q_i=-\v_{\La^i}(\b_i)$ otherwise; and 
\item for $i,j\in\I$, $i\ne j$, the stratum 
$[\La^i\oplus\La^j,q,m, \b_i+\b_j]$ is not equivalent
to a simple or zero stratum, with $q=\max \{q_i,q_j\}$.
\end{enumerate}
In this case, the splitting is uniquely determined (up to ordering) by
the stratum and we put $\Ll_\b=\bigoplus_{i\in\I} \A^{ii}$.
We put $\E=\F[\b]=\bigoplus_{i\in\I}\E_i$,
where $\E_i=\F[\b_i]$. We will sometimes write ``$\La$ is an
$\oe$-lattice sequence'' to mean that $\La=\bigoplus_{i\in\I}\La^i$ and
each $\La^i$ is an $\fo_{\E_i}$-lattice sequence on $\V^i$.

Let $\B=\B_\b$ denote the $\A$-centralizer of $\b$, so that
$\B=\bigoplus_{i\in\I}\B_i$, where $\B_i$
is the centralizer of $\b_i$ in $\A^{ii}$. 
We write $\til\G_\E=\B^\times$, $\til\G^i=\Aut_\F(\V^i)$ and
$\til\G_{\E_i}=\B_i^\times=\til\G^i\cap\til\G_\E$, so that
$\til\L_\b=\Ll_\b^\times=\prod_{i\in\I}\til\G^i$ is a Levi subgroup of $\til\G$ and
$\til\G_\E=\prod_{i\in\I}\til\G_{\E_i}\subseteq\til\L_\b$.  
Each $\til\G_{\E_i}$ is (the group of $\F_\so$-points of) the restriction of
scalars to $\F_\so$ of a general linear group over $\E_i$, provided
$\E_i/\F$ is separable; in any case,~$\til\G_{\E_i}$ is isomorphic to
some~$\GL_{m_i}(\E_i)$.
We also write $\PP_k(\La_\oe)=\PP_k(\La)\cap \B$, for $k\in\ZZ$, which gives
the filtration induced on $\B$ by thinking of $\La$ as an $\oe$-lattice
sequence, and $\til\P_k(\La_\oe)=\til\P_k(\La)\cap\B$, for $k\ge 0$.

%%%%%%%%%%%%%%%%%%%%%%%%%%%%%%%%%%%%%%%%%%%%%%%%%%%%%%%%%%%%%%%%%%%%%%%%
\subsection{} 
Let $[\La,n,m,\b]$ be a semisimple stratum in $\A$.
The \emph{affine class} of the stratum $[\La,n,m,\b]$ is the set of
all (semisimple) strata of the form
$$
[\La',n',m',\b],
$$
where $\La'=a\La+b$ is in the affine class of $\La$, $n'=an$ and $m'$
is any integer such that $\lfloor m'/a\rfloor=m$. 
In the course of the paper, there will be several objects associated
to a semisimple stratum $[\La,n,m,\b]$, in particular semisimple characters
(see~\S\ref{S.characters}). By a straightforward induction
(cf.~\cite[Lemma~2.2]{BSS}), these objects depend only on the affine
class of the stratum.

%%%%%%%%%%%%%%%%%%%%%%%%%%%%%%%%%%%%%%%%%%%%%%%%%%%%%%%%%%%%%%%%%%%%%%%%
%%%%%%%%%%%%%%%%%%%%%%%%%%%%%%%%%%%%%%%%%%%%%%%%%%%%%%%%%%%%%%%%%%%%%%%%
%%%%%%%%%%%%%%%%%%%%%%%%%%%%%%%%%%%%%%%%%%%%%%%%%%%%%%%%%%%%%%%%%%%%%%%%
%%%%%%%%%%%%%%%%%%%%%%%%%%%%%%%%%%%%%%%%%%%%%%%%%%%%%%%%%%%%%%%%%%%%%%%%

\section{Self-dual semisimple characters}\label{S.characters}

In this section we recall the notion of \emph{self-dual} semisimple
strata and characters from~\cite{Dat}, generalizing the \emph{skew}
semisimple case from~\cite{S4}. We also develop the theory of
$\b$-extensions in the self-dual situation. The results here are the
expected generalizations of the results in the skew case
from~\cite{S4,S5}. Moreover, most of the proofs follow by taking
fixed points under the involution $\s$ so are essentially identical to
those in the skew case; we will only give details when new phenomena arise.

%%%%%%%%%%%%%%%%%%%%%%%%%%%%%%%%%%%%%%%%%%%%%%%%%%%%%%%%%%%%%%%%%%%%%%%%
\subsection*{}{\bf{Self-dual semisimple strata}}
%%%%%%%%%%%%%%%%%%%%%%%%%%%%%%%%%%%%%%%%%%%%%%%%%%%%%%%%%%%%%%%%%%%%%%%%
\subsection{}
Let $[\La,n,m,\b]$ be a semisimple stratum and denote by $\V =
\bigoplus_{i\in\I}\V^i$ the  
associated splitting and use all the notations introduced
in~\S\ref{S.notation}. If $\Psi_i(X)\in\F[X]$ denotes the minimum 
polynomial of $\b_i$ then, by \cite[Remark~3.2(iii)]{S4},
we have $\V^i=\ker\Psi_i(\b)$.

If $[\La,n,m,\b]$ is also self-dual then, for each $i\in\I$,
there is a unique $j=\s(i)\in\I$ such that $\ov{\b_i} =
-\b_{j}$. Moreover, we see that $\ov\Psi_i(X) = \Psi_{\s(i)}(-X)$,
whence $(\V^i)^\perp = \bigoplus_{j \neq \s(i)}V^j$. Then, using the
usual block notation in $\A$, the action of the involution $\ov{\phantom{a}}$ on
$\A$ is such that $\ov{\A^{ij}}=\A^{\s(j)\s(i)}$.

We set $\I_0=\{i\in\I\mid \s(i)=i\}$ and choose a set of
representatives $\I_+$ for the orbits of $\s$ in
$\I\setminus\I_0$. Then we will write $\I_-=\s(\I_+)$ so that
$\I=\I_-\cup\I_0\cup\I_+$ (disjoint union) and
$$
\V = \bigoplus_{i\in\I_+}(\V^i \oplus \V^{\s(i)}) \oplus \bigoplus_{i\in\I_0} \V^{i}.
$$
It will sometimes be useful to place on ordering on $\I_+$, in which
case we will write $\I_+=\{1,\ldots,l\}$ and put $\s(i)=-i\in\I_-$, for
$i\in\I_+$; in this case we will write $\V^0=\bigoplus_{i\in\I_0}
\V^{i}$ so that $\V=\bigoplus_{i=-l}^l\V^i$, which we call the 
\emph{self-dual decomposition} associated to~$[\La,n,m,\b]$. We will 
also put $\b_0=\ee^0\b\ee^0$, where $\ee^0$ is the projection onto 
$\V^0$ with kernel $\bigoplus_{j\ne 0}\V^j$.

%%%%%%%%%%%%%%%%%%%%%%%%%%%%%%%%%%%%%%%%%%%%%%%%%%%%%%%%%%%%%%%%%%%%%%%%
\subsection{}\label{S.semigroups}
Let $[\La,n,m,\b]$ be a self-dual semisimple stratum and
$\V=\bigoplus_{i=-l}^l\V^i$ as above, with $\V^0=\bigoplus_{i\in\I_0}
\V^{i}$. We put $\til\G^i=\Aut_\F(\V^i)$, $\L^+_\b=\left(\prod_{i=-l}^l
\til\G^i\right)\cap\G^+$ and $\L_\b=\L^+_\b\cap\G$, which is a Levi subgroup of
$\G$. We have $\L_\b=\G^0\times\prod_{i=1}^l\til\G^i$, where $\G^0$ is
the unitary, symplectic or special orthogonal group fixing the
nondegenerate form $h|_{\V^0\times\V^0}$.

Put $\til\G_E=\B^\times$, the centralizer of $\b$, as
in~\S\ref{S.notation}. We put $\G^+_\E=\til\G_E\cap\G^+$ and
$\G_\E=\til\G_E\cap\G$, so that $\G_\E\subseteq\L_\b$. For $i\in\I_0$,
the involution on~$\F$ extends to each~$\E_i$ and we
write~$\E_{i,\so}$ for the subfield of fixed points; it is a subfield
of index $2$ except in the case $\E_i=\F=\F_\so$ (so that
$\b_i=0$).  

We have $\G_\E=\G_{\E_0}\times\prod_{i=1}^l\til\G_{\E_i}$ and
$\G_{\E_0}=\prod_{i\in\I_0}\G_{\E_i}$, where, for $i\in\I_0$, each
$\G_{\E_i}$ is the group of points of a unitary, symplectic or special
orthogonal group over $\E_{i,\so}$. (For each $i\in\I_0$, there is a
nondegenerate $\E_i/\E_{i,\so}$ $\e$-hermitian form $f_i$ on $\V^i$
such that the notions of lattice duality for $\Oo_{\E_i}$-lattices in
$\V^i$ given by $h|_{\V^i\times\V^i}$ and by $f_i$ coincide; then
$\G_{\E_i}$ is the group determined by this form.)

For $k\ge 0$, we write
$\P_k(\La_\oe)=\P_k(\La)\cap\G_\E=\til\P_k(\La_\oe)\cap\G$ and denote
by $\P^\so(\La_\oe)$ the inverse image in $\P(\La_\oe)=\P_0(\La_\oe)$
of the connected component of the reductive quotient
$\P(\La_\oe)/\P_1(\La_\oe)$. 

%%%%%%%%%%%%%%%%%%%%%%%%%%%%%%%%%%%%%%%%%%%%%%%%%%%%%%%%%%%%%%%%%%%%%%%%
\subsection{} 
The following two results are straightforward generalizations of
results from~\cite{S4}.

\begin{lemm}[{cf.~\cite[Proposition~3.4]{S4}}]
Let $[\La,n,0,\b]$ be a self-dual semisimple stratum in $\A$, with
associated splitting $\V=\bigoplus_{i\in\I}\V^i$. For
$0\le m\le n$, there is a self-dual semisimple stratum $[\La,n,m,\g]$
equivalent to $[\La,n,m,\b]$ such that $\g\in\Ll_\b^-$; in particular,
its associated splitting is a coarsening of $\bigoplus_{i\in\I}\V^i$.
\end{lemm}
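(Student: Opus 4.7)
The plan is to reduce to the non-self-dual statement~\cite[Proposition~3.4]{S4} and symmetrize its output under the involution $\s$. First I apply that proposition to $[\La,n,0,\b]$, producing a semisimple stratum $[\La,n,m,\g']$ equivalent to $[\La,n,m,\b]$ with $\g'\in\Ll_\b=\bigoplus_{i\in\I}\A^{ii}$ and associated splitting $\V=\bigoplus_{j\in\J}\W^j$ coarsening $\bigoplus_{i\in\I}\V^i$.

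Next I observe that this coarsening is automatically $\s$-stable. It is governed by an equivalence relation on $\I$ which merges $i$ and $i'$ precisely when $[\La^i\oplus\La^{i'},q,m,\b_i+\b_{i'}]$ is equivalent to a simple (or zero) stratum. Since $\b_{\s(i)}=-\ov{\b_i}$ and the lattice $\PP_{-m}(\La)$ is $\s$-stable (as $\La$ is self-dual), the involution sends such an equivalence to the analogous one for $\s(i),\s(i')$; hence $\s$ permutes these classes, and therefore permutes the $\W^j$.

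Finally, I set $\g=\tfrac12(\g'-\ov{\g'})$, well-defined since $p$ is odd. One has $\ov\g=-\g$, so $\g\in\A_-$, and $\g\in\Ll_\b$ because $\s$ permutes the blocks $\A^{ii}$; moreover
$$
\g-\b\;=\;\tfrac12\bigl((\g'-\b)-\ov{\g'-\b}\bigr)\;\in\;\PP_{-m}(\La),
$$
so $[\La,n,m,\g]$ is equivalent to $[\La,n,m,\b]$. To verify semisimplicity, write $\ee^j$ for the projection onto $\W^j$ along the other summands and $\g'_j=\ee^j\g'\ee^j$; a direct computation gives $\ee^j\g\ee^j=\tfrac12(\g'_j-\ov{\g'_{\s(j)}})$. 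Since $\g'+\ov{\g'}\equiv0$ modulo $\PP_{-m}(\La)$, projecting to $\A(\W^j)$ yields $\g'_j+\ov{\g'_{\s(j)}}\in\PP_{-m}(\La^j)$, whence $\ee^j\g\ee^j\equiv\g'_j$ modulo $\PP_{-m}(\La^j)$. Thus the $j$-th block of $\g$ defines the same simple (or zero) stratum class as the $j$-th block of $\g'$, and the same reasoning on pairs of blocks preserves the non-equivalence to a simple stratum required by semisimplicity. Hence $[\La,n,m,\g]$ is a self-dual semisimple stratum with $\g\in\Ll_\b^-$ whose splitting $\{\W^j\}$ is a coarsening of $\{\V^i\}$, as required. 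The only delicate point is the $\s$-stability of the coarsening; once that is in hand, the symmetrization is a routine device made possible by $p$ being odd.
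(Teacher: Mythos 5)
Your plan --- reduce to~\cite[Proposition~3.4]{S4} and then impose self-duality --- is the right high-level strategy, and matches the paper's blanket remark that the self-dual results ``follow by taking fixed points under the involution''. However, the symmetrization $\g=\frac12(\g'-\ov{\g'})$ does not complete the proof: it produces a self-dual $\g\in\Ll_\b^-$ with $\g\equiv\b\pmod{\PP_{-m}(\La)}$, but $[\La,n,m,\g]$ need not be a \emph{semisimple} stratum. Your computation shows that each block $\ee^j\g\ee^j$ is congruent to $\g'_j$ modulo $\PP_{-m}(\La^j)$, which only makes $[\La^j,q_j,m,\ee^j\g\ee^j]$ \emph{equivalent to} the simple (or zero) stratum $[\La^j,q_j,m,\g'_j]$ --- and equivalence to a simple stratum is strictly weaker than being one. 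Simplicity is a property of the element itself (it must generate a field, the lattice sequence must be pure for that field, and $m<-k_0$), and none of this is preserved under perturbation by $\PP_{-m}$. Concretely, for a $\s$-fixed block $j=\s(j)$ one has $\ee^j\g\ee^j=\frac12\bigl(\g'_j-\ov{\g'_j}\bigr)\in\F[\g'_j]+\ov{\F[\g'_j]}$, and there is no reason for $\F[\g'_j]$ to be $\s$-stable: equivalent simple strata have conjugate, not equal, associated fields. When it is not $\s$-stable, $\F[\ee^j\g\ee^j]$ may fail to be a field, and condition~(i) in the definition of semisimple fails for $[\La,n,m,\g]$.

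The correct argument organises the blocks of the coarsened splitting by $\s$-orbit. For $j\ne\s(j)$, take $\g_j:=\g'_j$ and $\g_{\s(j)}:=-\ov{\g'_j}$: both are simple (as $\s$ preserves simplicity) and the pair is compatible with the involution. For a fixed block $j=\s(j)$ one must invoke the skew-reduction theorem for simple strata: a self-dual stratum equivalent to a simple stratum is equivalent to a \emph{skew} simple stratum, so one may choose $\g_j$ simple with $\ov{\g_j}=-\g_j$ and $\F[\g_j]$ stable under the adjoint involution. This is a genuine additional input, proved by a Skolem--Noether/Glauberman-style fixed-point argument on the pro-$p$ orbit of simple representatives of the equivalence class, and it is precisely here --- not in the harmless division by $2$ --- that the hypothesis of odd residual characteristic is actually consumed. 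As a side remark, the $\s$-stability of the coarsening, which you flag as the delicate point, is more safely obtained by noting that $[\La,n,m,-\ov{\g'}]$ is semisimple and equivalent to $[\La,n,m,\g']$ and appealing to uniqueness of the associated splitting, rather than via the merging rule you state, which is itself unproven.
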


Let $[\La,n,m,\b]$ be a self-dual semisimple stratum in $\A$ and, for
$i\in\I_+\cup\I_0$, let $s_i:\A^{ii}\to\B_i$ be a \emph{tame corestriction}
relative to $\E_i/\F$ (see~\cite[\S1.3]{BK} for the definition); for
$i\in\I_0$ we may and do assume $s_i$ commutes with the involution.

\begin{lemm}[{cf.~\cite[Lemma~3.5]{S4}}]
Let $[\La,n,m,\b]$ be a self-dual semisimple stratum in $\A$, with
associated splitting $\V=\bigoplus_{i\in\I}\V^i$. For
$i\in\I_+\cup\I_0$, let $b_i\in\PP_{-m}(\La)\cap\A^{ii}$ be such that
$[\La^i_{\Oo_{\E_i}},m,m-1,s_i(b_i)]$ is equivalent to a semisimple
stratum, and assume that $b_i\in\A_-$ for $i\in\I_0$. Put
$b_i=-\overline b_{-i}$, for $i\in\I_-$, and
$b=\sum_{i\in\I}b_i$. Then $[\La,n,m-1,\b+b]$ is equivalent to a
self-dual semisimple stratum, whose associated splitting is a
refinement of $\bigoplus_{i\in\I}\V^i$.
\end{lemm}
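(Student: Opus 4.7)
The plan is to follow the inductive structure of the skew analogue~\cite[Lemma~3.5]{S4} (in which the index set is constrained to $\I=\I_0$) and extend it to accommodate the $\s$-paired blocks indexed by $i\in\I_+$ that appear in the general self-dual setting. Since the text preceding the lemma promises that most arguments go through simply by taking fixed points under~$\s$, the genuine task is to identify the pieces that require new input, namely the treatment of the pairs $(\V^i,\V^{-i})$ for $i\in\I_+$.

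First I would verify that $\b+b\in\A_-$: the element $\b$ is self-dual by hypothesis; $b_i\in\A_-$ for $i\in\I_0$ by assumption; and for $i\in\I_+$ the relation $b_{-i}=-\overline{b_i}$ gives $\overline{b_i+b_{-i}}=\overline{b_i}-b_i=-(b_i+b_{-i})$. Next, for each $i\in\I_0$, since the tame corestriction $s_i$ commutes with the involution and $b_i\in\A_-$, the block stratum $[\La^i_{\Oo_{\E_i}},m,m-1,s_i(b_i)]$ is itself self-dual on $\V^i$, and the preceding lemma applied inside $\A^{ii}$ yields a $\s$-stable refinement $\V^i=\bigoplus_k\V^{i,k}$ together with a self-dual correction $b_i'\in\A_-\cap\A^{ii}$. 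For $i\in\I_+$, the non-self-dual refinement procedure in $\A^{ii}$ gives a splitting of $\V^i$ and a correction $b_i'$; applying $\s$ then supplies the splitting of $\V^{-i}$ and the matching correction $b_{-i}':=-\overline{b_i'}$ on the paired block, with no independent choice required. Assembling these pieces produces a $\s$-stable splitting of~$\V$ refining $\bigoplus_{i\in\I}\V^i$; by uniqueness up to ordering of the splitting attached to a semisimple stratum, any semisimple equivalent of $[\La,n,m-1,\b+b]$ has this same splitting, and the modification of $\b+b$ by the sum of the corrections $b_i'$ produces an element of $\A_-$, delivering the required self-dual semisimple stratum.

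The principal obstacle is the inductive bookkeeping at the successive-approximation steps: for each $i\in\I_+$, the correction in $\A^{ii}$ must be chosen so that its $\s$-transport matches the correction on the $(-i)$-block, and this must remain coherent as the approximation is refined. Since the non-self-dual semisimple refinement determines the corrections essentially canonically (up to the finite ambiguity in the ordering of the splitting, which is manifestly stable under $\s$), the consistency is automatic: the $\I_0$ blocks are handled verbatim as in~\cite[Lemma~3.5]{S4}, while the paired $\I_+$/$\I_-$ blocks are handled symmetrically via the involution. This fits the pattern announced at the start of the section and requires no new calculation beyond the cosmetic step of keeping track of the $\s$-action on the block decomposition.
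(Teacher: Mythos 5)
The paper presents this lemma as a ``straightforward generalization'' of the skew case and gives no explicit proof, so the question is whether your sketch captures the intended argument. It does: the verification that $\b+b\in\A_-$ is correct, and the structural idea --- treat the $\I_0$ blocks exactly as in the skew case, and handle each $\I_+$/$\I_-$ pair by performing the refinement on one block and transporting it to the other via $\s$, taking $b'_{-i}=-\ov{b'_i}$ --- is precisely what makes the generalization work. This is indeed the expected route.

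A couple of points of imprecision worth tightening. First, for $i\in\I_0$ the derived stratum $[\La^i_{\Oo_{\E_i}},m,m-1,s_i(b_i)]$ lives in $\B_i=\End_{\E_i}(\V^i)$, not in $\A^{ii}$, and the subspaces in the refined splitting of $\V^i$ are $\E_i$-subspaces. Second, when you say ``the preceding lemma applied inside $\A^{ii}$'', it is unclear which result you mean: the hypothesis only guarantees the derived stratum is equivalent to a \emph{semisimple} stratum, so for $i\in\I_0$ one cannot directly cite the skew Lemma~3.5 of~\cite{S4}; one must first pass from ``self-dual stratum equivalent to a semisimple stratum'' to ``equivalent to a self-dual semisimple stratum'' (a Glauberman/averaging step in the style of~\cite[Proposition~3.4]{S4}, using that the residual characteristic is odd). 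That step is the genuine content beyond the $\GL$ case, and it deserves to be named explicitly rather than folded into ``the preceding lemma''. Finally, one should record (or cite) that the pairwise conditions in the definition of a semisimple stratum hold for the assembled refinement --- this is supplied by the non-self-dual version of the lemma, and the $\s$-equivariance of the construction then ensures the splitting is $\s$-stable and the representative can be taken in $\A_-$. With these clarifications, your argument is sound and coincides with the approach the paper intends.
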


The point of these lemmas is that now all objects associated to a
self-dual semisimple stratum may be defined inductively with all
intermediate strata also self-dual semisimple. In particular, all the
objects will be stable under the involution $\s$.

%%%%%%%%%%%%%%%%%%%%%%%%%%%%%%%%%%%%%%%%%%%%%%%%%%%%%%%%%%%%%%%%%%%%%%%%
\subsection{}
A self-dual semisimple stratum $[\La,n,m,\b]$ is called {\it skew}
if its associated splitting $\V=\bigoplus_{i\in\I}\V^i$ is orthogonal;
equivalently, in the notation above, if $\I=\I_0$.

\begin{lemm} 
Let $[\La,n,0,\b']$ be a self-dual semisimple stratum in $\A$ and
suppose $[\La,n,m,\b']$ is equivalent to a self-dual semisimple stratum
$[\La,n,m,\b]$ with $\b\in\Ll_{\b'}$. Write
$\V=\bigoplus_{i\in\I}\V^i$ for the splitting associated to
$[\La,n,m,\b]$, which is a coarsening of that for $[\La,n,0,\b']$. 
\begin{enumerate}
\item For each $i\in\I_+\cup\I_0$, the derived stratum
  $[\La^i_{\Oo_{\E_i}},m,m-1,s_i(\b'_i-\b_i)]$ is either null or
  equivalent to a semisimple stratum.
\item Suppose $0<m\le n$ is minimal
  such that $[\La,n,m,\b']$ is equivalent to a skew semisimple stratum.
Then~$[\La,n,m,\b]$ is skew and there is an $i\in\I=\I_0$ such that the derived
stratum $[\La^i_{\Oo_{\E_i}},m,m-1,s_i(\b'_i-\b_i)]$ is $\G$-split.
\end{enumerate}
\end{lemm}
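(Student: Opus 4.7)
For Part (1) I invoke the non-self-dual analogue from \cite{S4} block by block: for each $i\in\I_+\cup\I_0$, $[\La^i,n,m,\b'_i]\sim[\La^i,n,m,\b_i]$ is an equivalence of semisimple strata on $\A^{ii}$, and the corresponding result there asserts that the derived stratum $[\La^i_{\Oo_{\E_i}},m,m-1,s_i(\b'_i-\b_i)]$ is null or equivalent to a semisimple stratum. For $i\in\I_0$ the derived stratum is automatically self-dual since $s_i$ commutes with the involution; only semisimplicity is needed in the statement.

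For Part (2) let $[\La,n,m,\b_0]$ be a skew semisimple stratum equivalent to $[\La,n,m,\b']$, which exists by hypothesis. To show $[\La,n,m,\b]$ is skew, observe that $\b$ and $\b_0$ are both self-dual semisimple and equivalent at level $m$. By the self-dual analogue of the intertwining result for equivalent semisimple strata (proved in the non-self-dual case in \cite{S4} and transferred by the $\s$-equivariant arguments that are the leitmotif of this section), there is $g\in\til\P_1(\La)\cap\G$ conjugating the splitting of $\b$ to that of $\b_0$. Since $g\in\G$ preserves the form, it carries self-dual subspaces to self-dual subspaces, so the orthogonality of $\b_0$'s splitting pulls back to orthogonality of $\b$'s splitting. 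Hence $\I=\I_0$ and $\b$ is skew.

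For the $\G$-split derived stratum assertion I argue by contradiction: suppose every derived stratum $[\La^i_{\Oo_{\E_i}},m,m-1,s_i(\b'_i-\b_i)]$ for $i\in\I_0$ fails to be $\G$-split, hence is null or equivalent to a skew semisimple stratum. Set $b=\b'-\b\in\PP_{-m}(\La)$, so that $b_i=\b'_i-\b_i\in\A_-^{ii}$ for each $i\in\I_0$. Part~(1) together with the contradiction hypothesis shows the data $(b_i)$ satisfies the hypotheses of the preceding self-dual analogue of \cite[Lemma~3.5]{S4} with the additional feature that each derived stratum has orthogonal splitting. The conclusion of that lemma produces a self-dual semisimple stratum equivalent to $[\La,n,m-1,\b+b]=[\La,n,m-1,\b']$ whose splitting refines the skew splitting $\bigoplus_{i\in\I_0}\V^i$ of $\b$ by the orthogonal splittings of the $s_i(b_i)$; this refinement is itself orthogonal, so $[\La,n,m-1,\b']$ is equivalent to a skew semisimple stratum, contradicting the minimality of $m$. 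The key technical ingredient throughout Part~(2) is the $\s$-equivariant strengthening of results from the non-self-dual semisimple theory---most crucially, the existence of the conjugating element $g$ inside $\G$ rather than merely inside $\til\G$, which is what allows orthogonality to transfer between equivalent self-dual representatives.
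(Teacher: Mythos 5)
Your overall structure matches the paper's argument, but there are two points worth flagging.

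For part (i), you apply a derived-stratum result \emph{block by block at the level of $\I$}, treating $[\La^i,n,m,\b'_i]\sim[\La^i,n,m,\b_i]$ as an equivalence of strata and invoking an unspecified ``corresponding result'' in~\cite{S4}. The paper instead descends one level further, to the splitting $\V=\bigoplus_{j\in\I'}\V^j$ associated to $[\La,n,0,\b']$ (a refinement of $\bigoplus_{i\in\I}\V^i$): on each $\V^j$ the pair $[\La^j,n,m,\ee^j\b\ee^j]$, $[\La^j,n,m,\ee^j\b'\ee^j]$ is a genuine simple/pure pair and so falls squarely within the scope of~\cite[Theorem~2.4.1]{BK}, after which one observes that a direct sum of simple or null strata is equivalent to a semisimple stratum. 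Your citation is therefore imprecise: at the $\I$-level, $[\La^i,n,m,\b'_i]$ is not a pure stratum in $\A^{ii}$ (its $\F$-algebra is a product of fields), so there is no off-the-shelf simple/pure result to apply directly; the point of passing to $\I'$ is exactly to make each pair pure.

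For part (ii), your contradiction argument (all derived strata skew $\Rightarrow$ the refined stratum at level $m-1$ is skew, contradicting minimality of $m$) is the paper's argument, via the self-dual version of~\cite[Lemma~3.5]{S4}. What you add is an explicit argument that $[\La,n,m,\b]$ is skew, which the paper in fact does not address and which is genuinely needed before applying that lemma (otherwise the refined splitting would be self-dual but not orthogonal and the contradiction would not be reached). You have correctly identified this gap. However, your fix---the existence of $g\in\til\P_1(\La)\cap\G$ conjugating the splitting of $\b$ to that of a skew representative $\b_0$---is itself asserted without a reference and is not a trivial transfer from the $\tG$-case: obtaining a conjugating element inside $\G$ rather than merely inside $\til\G$ is precisely the subtle point. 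It would be cleaner to argue via an invariant of the equivalence class: whether a stratum is $\G$-split is read off the characteristic polynomial $\varphi_\b$, which is constant on the equivalence class, so $[\La,n,m,\b]$ has no $\G$-split pair of blocks if and only if the equivalent skew representative doesn't; since $\b\in\Ll_{\b'}^-$ one then concludes $\I=\I_0$. As written, your proposal replaces one unproven assertion (the paper's silence) by another.
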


\begin{proof}(i) Write~$\V=\bigoplus_{j\in\I'}\V^j$ for the
splitting associated to~$[\La,n,0,\b']$ and~$\ee^j$ for the
associated idempotents; then, for each~$j\in\I'$,
there is a unique~$i\in\I$ such that~$\V^j\subseteq\V^i$. 
Now, applying~\cite[Theorem~2.4.1]{BK} to the simple
stratum~$[\La^j,n,m,\ee^j\b\ee^j]$ and the pure 
stratum~$[\La^j,n,m,\ee^j\b'\ee^j]$, we see 
that~$[\La^j_{\Oo_{\E_i}},m,m-1,\ee^j\(s_i(\b'_i-\b_i)\)\ee^j]$ is
either null or equivalent to a simple stratum. The result follows
since any direct sum of simple or null strata is equivalent to a
semisimple stratum.

(ii) If~$[\La^i_{\Oo_{\E_i}},m,m-1,s_i(\b'_i-\b_i)]$ is not~$\G$-split
then it is skew; thus, if
no~$[\La^i_{\Oo_{\E_i}},m,m-1,s_i(\b'_i-\b_i)]$ is $\G$-split then,
by~\cite[Lemma~3.5]{S4}, the stratum~$[\La,n,m-1,\b']$ is equivalent
to a skew semisimple stratum, contradicting the minimality of~$m$.
\end{proof}

%%%%%%%%%%%%%%%%%%%%%%%%%%%%%%%%%%%%%%%%%%%%%%%%%%%%%%%%%%%%%%%%%%%%%%%%
\subsection*{}{\bf{Self-dual semisimple characters and Heisenberg
extensions}}
%%%%%%%%%%%%%%%%%%%%%%%%%%%%%%%%%%%%%%%%%%%%%%%%%%%%%%%%%%%%%%%%%%%%%%%%
\subsection{}
Let~$[\La,n,0,\b]$ be a semisimple stratum in~$\A$. Associated to this
are certain orders~$\til\HH=\til\HH(\b,\La)$ and~$\til\JJ=\til\JJ(\b,\La)$ 
in~$\A$ (see~\cite[\S3.2]{S4}), along with compact groups with filtration
\[
\til\H=\til\H(\b,\La)=\til\HH\cap\til\P(\La),\qquad 
\til\H^n=\til\H^n(\b,\La)=\til\H\cap\til\P_n(\La),\hbox{ for }n\ge 1,
\]
and similarly for~$\til\J$. For each~$m\ge 0$ there is also a 
set~$\Cc(\La,m,\b)$ of \emph{semisimple characters} of the group~$\til\H^{m+1}$ 
(see~\cite[Definition~3.13]{S4}) with nice properties, some of which we 
recall in Lemma~\ref{lem:tth} below.

Recall that, given a representation~$\rho$ of a subgroup~$\til\K$ 
of~$\tG$ and~$g\in\tG$, the~\emph{$g$-intertwining space} of~$\rho$ is
\[
\I_g(\rho)=\I_g(\rho\mid\til\K)=\Hom_{\til\K\cap{}^g\til\K}(\rho,{}^g\rho),
\]
where~$^g\rho$ is the representation of~$^g\til\K=g\til\K g^{-1}$ given 
by $^g\rho(gkg^{-1}) = \rho(k)$, and the~\emph{$\tG$-intertwining} of~$\rho$ 
is
\[
\I_\tG(\rho) = \I_\tG(\rho\mid\til\K) = \{g\in\tG : \I_g(\rho) \ne\{0\}\}.
\]

\begin{lemm}[{\cite[Theorem~3.22,~Corollary~3.25]{S4}}]
\label{lem:tth}
Let~$\tth\in\Cc(\La,0,\b)$. Then
\begin{enumerate}
\item the intertwining of~$\tth$ is given 
by~$\I_{\tG}(\tth)=\til\J^1\tG_\E\til\J^1$;
\item there is a unique irreducible representation~$\til\eta$ 
of~$\til\J^1$ which contains~$\tth$; 
moreover,~$\I_{\tG}(\til\eta)=\til\J^1\tG_\E\til\J^1$.
\end{enumerate}
\end{lemm}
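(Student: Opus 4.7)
The plan is to follow the arguments in \cite[Theorem~3.22,~Corollary~3.25]{S4}, where the skew semisimple case is treated. Since neither part of the statement involves the involution $\s$ in its conclusion (the intertwining is taken in $\tG$, not in $\G$), the self-dual structure of $\tth$ plays no essential role here; the skew proof carries over once we know that all the underlying objects ($\til\H$, $\til\J$, the set $\Cc(\La,0,\b)$) are defined in the self-dual case, which was arranged inductively via the preceding self-dual versions of the construction lemmas.

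For part~(i), I would proceed by induction on the depth of $\tth$ (equivalently, on $\v_\La(\b)$), using the splitting $\V = \bigoplus_{i \in \I} \V^i$. On each diagonal block, $\tth$ restricts to a simple character of the corresponding $\til\H^1(\b_i,\La^i)$ whose intertwining is given by the Bushnell--Kutzko formula $\til\J^1_i\,\tG_{\E_i}\,\til\J^1_i$. To handle the off-diagonal blocks, I would use the tame corestrictions $s_i$ relative to $\E_i/\F$ together with axiom~(ii) in the definition of a semisimple stratum, which forbids equivalence of the mixed strata $[\La^i \oplus \La^j, q, m, \b_i + \b_j]$ to simple or zero strata. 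A derived-stratum argument on the depth then shows that any element of $\tG$ intertwining $\tth$ must lie in $\til\J^1\,\tG_\E\,\til\J^1$.

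For part~(ii), the quotient $\til\J^1/\til\H^1$ is an abelian $p$-group carrying a well-defined alternating $\mathbb{F}_p$-bilinear form $h_\tth(x, y) = \tth([x,y])$. One checks that $h_\tth$ is non-degenerate using the standard duality between $\til\J$ and $\til\H$ built into their definitions. The general theory of Heisenberg representations then yields a unique irreducible $\til\eta$ of $\til\J^1$ containing $\tth$, of dimension $[\til\J^1 : \til\H^1]^{1/2}$. The equality $\I_\tG(\til\eta) = \I_\tG(\tth)$ follows from Mackey theory combined with the uniqueness of the Heisenberg lift: each space $\I_g(\til\eta)$ is at most one-dimensional, and is non-zero exactly when $g \in \I_\tG(\tth)$, since a nonzero intertwiner of $\til\eta$ restricts to one of $\tth$, and conversely $g$-conjugation of the Heisenberg data on $\til\J^1 \cap {}^g\til\J^1$ extends any intertwiner of $\tth$ uniquely.

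The main obstacle is the off-diagonal intertwining computation in part~(i), where one must carefully exploit the semisimple-stratum axioms to eliminate extraneous intertwining contributions; in the skew case this is done in \cite{S4}, and in the self-dual generalization no new phenomena appear, since taking $\Sigma$-fixed points commutes with all the constructions involved.
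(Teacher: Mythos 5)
The paper offers no proof of this lemma at all: it is stated as a verbatim recall of \cite[Theorem~3.22, Corollary~3.25]{S4}, and those results already concern the group $\tG=\GL_\F(\V)$ for an \emph{arbitrary} semisimple stratum $[\La,n,0,\b]$. Your framing starts from a misreading of the citation: you assert that in \cite{S4} "the skew semisimple case is treated," but Theorem~3.22 and Corollary~3.25 of \cite{S4} are established in the $\GL$-setting \emph{before} the skew specialization is introduced, and apply to any semisimple stratum. Likewise, the lemma here is stated in a paragraph where $[\La,n,0,\b]$ is simply "a semisimple stratum in $\A$" --- the self-dual condition is only imposed in the paragraph that follows. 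So your premise that "the self-dual structure of $\tth$ plays no essential role" is true, but for the stronger reason that no self-dual structure is assumed in the first place: there is nothing to extend, and no need to revisit the construction of $\til\H$, $\til\J$, $\Cc(\La,0,\b)$ in a self-dual context.

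Taken on its own terms, your sketch of how the cited intertwining results are actually proved is reasonable in outline --- the inductive reduction along the splitting $\V=\bigoplus_i\V^i$, the Bushnell--Kutzko formulas on the diagonal blocks, the use of the semisimplicity axiom (ii) to exclude off-diagonal intertwining via derived strata, and the Heisenberg/Mackey argument for part~(ii) are all recognizable as features of the argument in \cite{S4}. But it is a high-level reconstruction of a result the paper takes as known, not an independent or new proof, and it supplies none of the delicate bookkeeping (choice of tame corestrictions, transfer between levels, control of the intertwining as $m$ decreases to $0$) that the cited argument actually consists of. The correct move here is simply to cite \cite[Theorem~3.22, Corollary~3.25]{S4} and observe that the hypotheses are those of a general semisimple stratum, exactly as the paper does.
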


%%%%%%%%%%%%%%%%%%%%%%%%%%%%%%%%%%%%%%%%%%%%%%%%%%%%%%%%%%%%%%%%%%%%%%%%
\subsection{}
Now suppose~$[\La,n,0,\b]$ is a self-dual semisimple stratum and retain 
the notation of the previous paragraph. The associated 
orders and groups are invariant under the action of the involution~$\s$ 
and we put~$\H=\til\H\cap\G$ etc., as usual. The set~$\Cc_-(\La,m,\b)$ 
of \emph{self-dual semisimple characters} is the set of restrictions 
to~$\H^{m+1}$ of the semisimple characters~$\th\in\Cc(\La,m,\b)^\Si$; this 
can also be described in terms of the Glauberman correspondence 
(cf.~\cite[\S3.6]{S4}). The next lemma now follows exactly as 
in~\cite[Proposition~3.27,~Proposition~3.31]{S4}.

\begin{lemm}\label{lem:HeisenbergG}
Let~$\th\in\Cc_-(\La,0,\b)$. Then
\begin{enumerate}
\item the intertwining of~$\th$ is given 
by~$\I_{\G}(\th)=\J^1\G_\E\J^1$;
\item there is a unique irreducible representation~$\eta$ 
of~$\J^1$ which contains~$\th$; if~$\th=\tth|_{\H^1}$, 
for~$\tth\in\Cc(\La,0,\b)^\Si$ and~$\til\eta$ is the corresponding 
representation of~$\til\J^1$, then~$\eta$ is the Glauberman transfer 
of~$\til\eta$.
\end{enumerate}
\end{lemm}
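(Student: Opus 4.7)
The plan is to deduce the statement from Lemma~\ref{lem:tth} by combining $\s$-equivariance with Glauberman's correspondence, transcribing the proofs of \cite[Propositions~3.27 and 3.31]{S4} which treat the strictly skew case. Since those arguments are purely formal once one knows that the stratum is self-dual semisimple (they do not actually use $\I=\I_0$), only cosmetic changes are required.

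First I would choose a lift $\tth\in\Cc(\La,0,\b)^\Si$ with $\tth|_{\H^1}=\th$; such a lift exists by the very definition of $\Cc_-(\La,0,\b)$. Applying Lemma~\ref{lem:tth}(ii) gives the unique irreducible representation $\til\eta$ of $\til\J^1$ containing $\tth$; by uniqueness and the $\s$-stability of $\tth$ we have $\til\eta^\s\simeq\til\eta$. Because $p$ is odd and $\s$ has order $2$, Glauberman's correspondence for the pro-$p$ group $\til\J^1$ under $\Si$ produces an irreducible representation $\eta$ of $(\til\J^1)^\s=\J^1$. To see that $\eta$ contains $\th$, recall that $\til\eta|_{\H^1}$ is a sum of copies of the linear character $\tth$, so
\[
\chi_{\til\eta}|_{\H^1}\ =\ (\dim\til\eta)\,\tth.
\]
Compatibility of the Glauberman correspondence with restriction to $\s$-fixed subgroups of normal pro-$p$ subgroups then gives that $\chi_\eta|_{\H^1}$ is a nonzero integer multiple of $\th$, and Frobenius reciprocity yields $\Hom_{\H^1}(\th,\eta)\ne 0$. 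Uniqueness of $\eta$ is inherited from that of $\til\eta$ via the bijectivity of the correspondence. This proves~(ii).

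For~(i), the inclusion $\J^1\G_\E\J^1\subseteq\I_\G(\th)$ is immediate because each of $\J^1$ and $\G_\E$ intertwines $\tth$ and hence $\th$. Conversely, if $g\in\I_\G(\th)\subseteq\G$, a standard Heisenberg-type argument as in \cite[Proposition~3.27]{S4} promotes $g$ to an element of $\I_{\tG}(\tth)=\til\J^1\tG_\E\til\J^1$, so the crux is the identity
\[
(\til\J^1\tG_\E\til\J^1)\cap\G\ =\ \J^1\G_\E\J^1.
\]
One writes $g=u_1 x u_2$ with $u_i\in\til\J^1$ and $x\in\tG_\E$, then modifies the factorization by $\s$-fixed elements until $u_i\in\J^1$ and $x\in\G_\E$; the modification is controlled via the Cayley map $\PP_k^-\to\P_k$ of Section~\ref{S.notation}.

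The main obstacle is precisely the last product decomposition: one must arrange the factorization $g=u_1 x u_2$ in a manner compatible with the involution. This amounts to showing the vanishing of a $1$-cocycle for $\s$ valued in the pro-$p$ group $\til\J^1\cap{}^g\til\J^1$, which holds because $p$ is odd, and to using that $\tG_\E$ is $\s$-stable with $(\tG_\E)^\s=\G_\E$. Both ingredients hold verbatim in the self-dual case, so the skew-case proofs of \cite{S4} transfer without modification; I anticipate no new difficulty beyond carefully reproducing those arguments with $\I_0\subsetneq\I$ allowed.
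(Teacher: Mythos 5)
Your proposal is correct and follows essentially the same route as the paper, which proves the lemma simply by observing (in the paragraph before it) that the arguments of~\cite[Propositions~3.27 and~3.31]{S4} transfer verbatim from the skew to the self-dual case by taking $\Sigma$-fixed points. You correctly identify the two pillars of that transfer: the Glauberman correspondence for the pro-$p$ group $\til\J^1$ under $\Sigma$ (needing only $p$ odd), and the cohomological vanishing that gives $(\til\J^1\tG_\E\til\J^1)\cap\G=\J^1\G_\E\J^1$; and you correctly note that nothing in~\cite{S4} actually exploits $\I=\I_0$. One cosmetic slip: in the displayed formula you should write $\chi_{\til\eta}|_{\H^1}=(\dim\til\eta)\,\th$ rather than $(\dim\til\eta)\,\tth$, since $\H^1\subseteq\til\H^1$ and $\th=\tth|_{\H^1}$ — but this does not affect the argument.
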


%%%%%%%%%%%%%%%%%%%%%%%%%%%%%%%%%%%%%%%%%%%%%%%%%%%%%%%%%%%%%%%%%%%%%%%%
\subsection*{}{\bf{Transfer}}
%%%%%%%%%%%%%%%%%%%%%%%%%%%%%%%%%%%%%%%%%%%%%%%%%%%%%%%%%%%%%%%%%%%%%%%%
\subsection{} Let~$[\La,n,0,\b]$ and~$[\La',n',0,\b]$ be semisimple 
strata in~$\A$. Then (see~\cite[Proposition~3.26]{S4}) there is a 
canonical bijection (called the~\emph{transfer})
\[
\tau_{\La,\La',\b}:\Cc(\La,0,\b)\to\Cc(\La',0,\b)
\]
such that, for~$\tth\in\Cc(\La,0,\b)$, the 
character~$\tth':=\tau_{\La,\La',\b}(\tth)$ is the unique semisimple character 
in~$\Cc(\La',0,\b)$ such that~$\tG_\E\cap\I_\tG(\tth,\tth')\ne\emptyset$. 
Indeed,~$\tG_\E\subseteq\I_\tG(\tth,\tth')$. 

If the semisimple strata are self-dual then the 
bijection~$\tau_{\La,\La',\b}$ commutes with the involution 
(cf.~\cite[Proposition~3.32]{S4}) so induces a 
bijection~$\tau_{\La,\La',\b}:\Cc_-(\La,0,\b)\to\Cc_-(\La',0,\b)$.

Since, by Lemma~\ref{lem:HeisenbergG}, for each~$\th\in\Cc_-(\La,0,\b)$ 
there is a unique Heisenberg extension~$\eta$, we will also 
write~$\tau_{\La,\La',\b}(\eta)$ for the Heisenberg extension~$\eta'$ 
of~$\th':=\tau_{\La,\La',\b}(\th)$.

%%%%%%%%%%%%%%%%%%%%%%%%%%%%%%%%%%%%%%%%%%%%%%%%%%%%%%%%%%%%%%%%%%%%%%%%
\subsection{} Now suppose~$[\La,n,0,\b]$ and~$[\La',n',0,\b]$ are 
self-dual semisimple strata with the additional property 
that~$\AA(\La_\oe)\subseteq\AA(\La'_\oe)$. Let~$\th\in\Cc_-(\La,0,\b)$, 
denote by~$\eta$ the Heisenberg representation given by 
Lemma~\ref{lem:HeisenbergG}, and put~$\th'=\tau_{\La,\La',\b}(\th)$ 
and~$\eta'=\tau_{\La,\La',\b}(\eta)$. We form the 
group~$\J^1_{\La,\La'}=\P_1(\La_\oe)\J^1(\b,\La')$. As 
in~\cite[Propositions~3.7,~3.12,~Corollary~3.11]{S5} (see also~\cite[Proposition~1.2]{Bl}), we have:

\begin{prop}\label{prop:etaLL'}
There is a unique irreducible representation~$\eta_{\La,\La'}$ 
of~$\J^1_{\La,\La'}$ such that
\begin{enumerate}
\item $\eta_{\La,\La'}|_{\J^1(\b,\La')}=\eta'$, and
\item for any self-dual semisimple stratum~$[\La'',n'',0,\b]$ such 
that~$\AA(\La_{\oe})=\AA(\La''_\oe)$ and $\AA(\La'')\subseteq\AA(\La')$, we 
have that $\eta_{\La,\La'}$ and~$\tau_{\La,\La'',\b}(\eta)$ induce equivalent irreducible representations of~$\P_1(\La'')$.
\end{enumerate}
The intertwining of~$\eta_{\La,\La'}$ is given by
\[
\dim\I_g(\eta_{\La,\La'})\ =\ \begin{cases}
1 &\hbox{ if }g\in\J^1_{\La,\La'}\G^+_E\J^1_{\La,\La'}, \\
0 &\hbox{ otherwise.}
\end{cases}
\]
Moreover, if~$\AA(\La_\oe)$ is a minimal self-dual~$\oe$-order contained 
in~$\AA(\La'_\oe)$ then~$\eta_{\La,\La'}$ is the unique extension 
of~$\eta'$ to~$\J^1_{\La,\La'}$ which is intertwined by all of~$\G_\E$.
\end{prop}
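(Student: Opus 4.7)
The plan is to mirror the strategy of \cite[Propositions~3.7,~3.12,~Corollary~3.11]{S5}, which establish the analogous statement in the skew case (where $\I=\I_0$). Because the objects involved are defined inductively and compatibly with the involution~$\s$, most arguments reduce, by passing to~$\Si$-fixed points and applying the Glauberman correspondence, to the corresponding statements already proved for the ambient group in~\cite{S5}; it is enough to indicate the new features required to handle indices in~$\I_+$.

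For existence and uniqueness of~$\eta_{\La,\La'}$, let~$\tth\in\Cc(\La,0,\b)^\Si$ be a $\s$-invariant lift of~$\th$, let~$\til\eta$ be its Heisenberg extension and~$\til\eta'=\tau_{\La,\La',\b}(\til\eta)$ its transfer, and form~$\til\J^1_{\La,\La'}:=\til\P_1(\La_\oe)\til\J^1(\b,\La')$. The $\tG$-version of the proposition (as in~\cite{S5}) supplies a unique irreducible~$\til\eta_{\La,\La'}$ of~$\til\J^1_{\La,\La'}$ extending~$\til\eta'$ and satisfying the $\tG$-analogue of~(ii). By uniqueness applied to~$\til\eta_{\La,\La'}\circ\s$, this representation is $\s$-invariant; since~$\til\J^1_{\La,\La'}$ is pro-$p$ with~$p$ odd, the Glauberman correspondence produces an irreducible representation~$\eta_{\La,\La'}$ of~$(\til\J^1_{\La,\La'})^\Si$, which equals~$\J^1_{\La,\La'}$ by a standard Iwahori-decomposition argument along the self-dual decomposition~$\V=\bigoplus_{i=-l}^{l}\V^i$. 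Property~(i) follows because Glauberman commutes with restriction to $\s$-stable pro-$p$ subgroups, applied to~$\til\eta_{\La,\La'}|_{\til\J^1(\b,\La')}=\til\eta'$. Property~(ii) follows either by applying Glauberman (at the level of characters) to the $\tG$-induction statement and Mackey, or by running the inductive proof of~\cite[Corollary~3.11]{S5} verbatim on fixed points.

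For the intertwining, the inclusion~$\I_\G(\eta_{\La,\La'})\supseteq\J^1_{\La,\La'}\G^+_\E\J^1_{\La,\La'}$ is obtained as in~\cite[Proposition~3.12]{S5} by descending explicit intertwiners: for~$g\in\G^+_\E$ the $\tG$-intertwiner is unique up to scalar, hence $\s$-equivariant, and so descends to~$\G$. For the reverse inclusion, a~$g\in\G$ intertwining~$\eta_{\La,\La'}$ lifts via Glauberman to a $g$-intertwiner of~$\til\eta_{\La,\La'}$, placing~$g$ in~$\til\J^1_{\La,\La'}\tG_\E\til\J^1_{\La,\La'}\cap\G^+=\J^1_{\La,\La'}\G^+_\E\J^1_{\La,\La'}$ (the last equality by a $\s$-fixed-point computation already carried out in the skew case). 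Multiplicity one on~$\G^+_\E$ is inherited from the $\tG$-case, since Glauberman preserves dimensions of intertwining spaces between $\s$-fixed representations.

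For the final assertion, assume~$\AA(\La_\oe)$ is a minimal self-dual~$\oe$-order in~$\AA(\La'_\oe)$. Any two irreducible extensions of~$\eta'$ to~$\J^1_{\La,\La'}$ differ by a character of the abelian quotient~$\J^1_{\La,\La'}/\J^1(\b,\La')$; requiring that both be intertwined by all of~$\G_\E$ forces this character to be $\G_\E$-invariant, and minimality of~$\AA(\La_\oe)$ implies the $\G_\E$-action on the quotient has no nontrivial invariants, so the character must be trivial. The principal obstacle, as usual in the self-dual case, is the bookkeeping of~$\Si$-fixed points for the $\I_+$-components, where~$\s$ swaps pairs of~$\tG$-factors: in that situation the fixed points are diagonally embedded GL-subgroups on which both Glauberman and the Heisenberg structure are essentially trivial, so no genuinely new phenomena arise beyond those already treated in~\cite{S5}.
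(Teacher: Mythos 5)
Your proposal takes essentially the same route as the paper. The paper's ``proof'' of this proposition consists solely of the citation ``As in~\cite[Propositions~3.7,~3.12,~Corollary~3.11]{S5} (see also~\cite[Proposition~1.2]{Bl}), we have:'', in light of the standing remark at the start of~\S\ref{S.characters} that the self-dual results are obtained by taking~$\Si$-fixed points and running the skew-case arguments of~\cite{S4,S5} through the Glauberman correspondence. Your proposal is a legitimate unpacking of that reduction: lift to a~$\s$-invariant~$\tth$, invoke the~$\tG$-statement, observe~$\s$-invariance via uniqueness, and descend by Glauberman, with the intertwining formula and the induction compatibility~(ii) inherited by the standard fixed-point mechanisms.

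One step, however, is stated too loosely to count as a proof: the final uniqueness assertion in the minimal case. First, the quotient~$\J^1_{\La,\La'}/\J^1(\b,\La')\cong\P_1(\La_\oe)/\P_1(\La'_\oe)$ is the image of the pro-unipotent radical of an Iwahori inside a larger parahoric, hence identifies with the unipotent radical of a Borel in the finite reductive quotient~$\P(\La'_\oe)/\P_1(\La'_\oe)$; this is typically \emph{not} abelian (two extensions of~$\eta'$ still differ by a character, but one factoring through the abelianization). More seriously, ``intertwined by all of~$\G_\E$'' does not translate into ``the character is~$\G_\E$-invariant'' --- intertwining by~$g$ only constrains the character on~$\J^1_{\La,\La'}\cap{}^g\J^1_{\La,\La'}$ --- and the conclusion ``minimality of~$\AA(\La_\oe)$ implies no nontrivial~$\G_\E$-invariants'' is not the relevant statement. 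The actual argument (as in \cite[5.1.15]{BK} and its skew analogue in~\cite{S5}) uses specific Weyl-group representatives in~$\G_\E$ which conjugate the ``positive'' unipotent radical~$\P_1(\La_\oe)/\P_1(\La'_\oe)$ to the opposite one; a character of~$\J^1_{\La,\La'}$ trivial on~$\J^1(\b,\La')$ that is intertwined by such elements is forced to be trivial on the whole unipotent radical. Your overall outline would be correct with this step replaced by that argument.
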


%%%%%%%%%%%%%%%%%%%%%%%%%%%%%%%%%%%%%%%%%%%%%%%%%%%%%%%%%%%%%%%%%%%%%%%%
\subsection*{}{\bf{Standard $\b$-extensions}}
%%%%%%%%%%%%%%%%%%%%%%%%%%%%%%%%%%%%%%%%%%%%%%%%%%%%%%%%%%%%%%%%%%%%%%%%
\subsection{} We continue with the notation of the previous paragraph 
so~$\th\in\Cc_-(\La,0,\b)$ and~$\eta$ is the Heisenberg representation, 
while~$\th',\eta'$ are their transfers to the self-dual semisimple 
stratum~$[\La',n',0,\b]$, with~$\AA(\La_\oe)\subseteq\AA(\La'_\oe)$. 
We form the groups~$\J^+=\til\J(\b,\La)\cap\G^+$ 
and~$\J_{\La,\La'}^+=\P^+(\La_\oe)\J^1(\b,\La')$.

\begin{lemm}[{\cite[Lemma~4.3]{S5}}] 
In this situation, there is a canonical bijection~$\BB_{\La,\La'}$ 
from the set of extensions~$\k$ of~$\eta$ to~$\J^+$ to the set 
of extensions~$\k'$ of~$\eta'$ to~$\J_{\La,\La'}^+$. 

If~$\AA(\La)\subseteq\AA(\La')$ then~$\k'=\BB_{\La,\La'}(\k)$ is the unique 
extension of~$\eta'$ such that~$\k,\k'$ induce equivalent irreducible representations of~$\P^+(\La_\oe)\P_1(\La)$.
\end{lemm}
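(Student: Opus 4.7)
The strategy is to adapt the argument of \cite[Lemma~4.3]{S5} from the skew case to the self-dual semisimple setting. The main input already in hand is Proposition~\ref{prop:etaLL'}, which supplies a canonical extension~$\eta_{\La,\La'}$ of~$\eta'$ to~$\J^1_{\La,\La'}$ together with its intertwining formula~$\I_\G(\eta_{\La,\La'})=\J^1_{\La,\La'}\G^+_\E\J^1_{\La,\La'}$. The bijection~$\BB_{\La,\La'}$ should send~$\k$ to the extension~$\k'$ of~$\eta'$ canonically attached to~$\k$; in particular, any such~$\k'$ will be shown to restrict to~$\eta_{\La,\La'}$ on~$\J^1_{\La,\La'}$, so that matching extensions of~$\eta$ to~$\J^+$ with extensions of~$\eta_{\La,\La'}$ from~$\J^1_{\La,\La'}$ to~$\J^+_{\La,\La'}$ is the crux.

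I first treat the case~$\AA(\La)\subseteq\AA(\La')$. In this situation, $\P_1(\La')\subseteq\P_1(\La)$, and both~$\J^+$ and~$\J^+_{\La,\La'}$ are contained in~$\Qq:=\P^+(\La_\oe)\P_1(\La)$. Given~$\k$, I form~$\pi:=\Ind_{\J^+}^{\Qq}\k$: using the intertwining formula for~$\eta$ from Lemma~\ref{lem:HeisenbergG}(i) together with a Mackey decomposition of~$\Qq$ relative to~$\J^+$, I expect~$\pi$ to be irreducible. The analogous induction~$\Ind_{\J^+_{\La,\La'}}^{\Qq}\k'$ from any extension~$\k'$ of~$\eta_{\La,\La'}$ to~$\J^+_{\La,\La'}$ should likewise be irreducible by the intertwining formula in Proposition~\ref{prop:etaLL'}, and distinct extensions yield inequivalent inductions. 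Since the finite quotients~$\J^+/\J^1$ and~$\J^+_{\La,\La'}/\J^1_{\La,\La'}$ are both canonically identified with~$\P^+(\La_\oe)/\P_1(\La_\oe)$, the sets of extensions on the two sides are torsors under the same character group. The bijection~$\BB_{\La,\La'}$ is then characterised by~$\Ind_{\J^+_{\La,\La'}}^{\Qq}\BB_{\La,\La'}(\k)\simeq\pi$; existence and uniqueness follow from the two irreducibility statements together with the torsor structure, and this also delivers the final clause of the lemma.

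For the general case with only~$\AA(\La_\oe)\subseteq\AA(\La'_\oe)$, I reduce to Case~1 by replacing~$\La$ with a lattice sequence~$\La_1$ in its affine class satisfying~$\AA(\La_1)\subseteq\AA(\La')$; since all the groups, semisimple characters and Heisenberg extensions depend only on the affine class (as recalled at the end of Section~\ref{S.notation}), the Case~1 bijection for~$(\La_1,\La')$ transports to the required bijection for~$(\La,\La')$. Independence of the auxiliary choice~$\La_1$ is then checked by comparing two such choices through a common affine refinement and applying Case~1 to each pair.

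The main obstacle is verifying the irreducibility of both induced representations over~$\Qq$. This rests on a careful Mackey calculation combining both intertwining formulae with the fact that~$\P_1(\La_\oe)$ normalises~$\eta$ through its Heisenberg structure. As for the other results in this section, the self-dual case follows by taking~$\Si$-fixed points in the semisimple (non-self-dual) analogue and invoking the Glauberman correspondence to pass between~$\Si$-fixed representations. The possible disconnectedness of~$\P^+(\La_\oe)/\P_1(\La_\oe)$ as a finite reductive group is harmless at this stage, since we are only parameterising extensions rather than analysing their internal structure; the disconnectedness will only become delicate later, when these extensions are used to build the~$\b$-extensions~$\k$.
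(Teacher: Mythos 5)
Your treatment of the containment case $\AA(\La)\subseteq\AA(\La')$ is essentially the right argument, and it is the one reflected in the second clause of the lemma: both $\J^+$ and $\J^+_{\La,\La'}$ lie in $\P^+(\La_\oe)\P_1(\La)$, induction to this group is irreducible because any element of it intertwining $\k$ (resp.\ $\k'$) already lies in $\J^1\P^+(\La_\oe)\J^1\subseteq\J^+$ (resp.\ in $\J^1_{\La,\La'}\P^+(\La_\oe)\J^1_{\La,\La'}\subseteq\J^+_{\La,\La'}$), and uniqueness of $\k'$ follows. One caveat: \emph{existence} of a $\k'$ with matching induction does not follow from the torsor structure plus irreducibility; you need to know that an irreducible representation of $\P^+(\La_\oe)\P_1(\La)$ containing $\eta$ also contains $\eta_{\La,\La'}$, which rests on the compatibility $\Ind_{\J^1_{\La,\La'}}^{\P_1(\La)}\eta_{\La,\La'}\simeq\Ind_{\J^1}^{\P_1(\La)}\eta$ of Proposition~\ref{prop:etaLL'}(ii) (applied with $\La''=\La$) together with a Mackey/counting argument; you invoke only the intertwining formula.

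The genuine gap is your reduction of the general case. Replacing $\La$ by a lattice sequence $\La_1$ in its affine class can never produce $\AA(\La_1)\subseteq\AA(\La')$: since $\PP_k(a\La+b)=\PP_{\lceil k/a\rceil}(\La)$, every member of the affine class satisfies $\AA(a\La+b)=\AA(\La)$ and $\PP_1(a\La+b)=\PP_1(\La)$, so the containment holds for $\La_1$ if and only if it already held for $\La$. But the hypothesis of the lemma is only $\AA(\La_\oe)\subseteq\AA(\La'_\oe)$, and this weaker situation is the substantive one --- it is exactly what occurs for the pair $(\La,\MM_\La)$ used to define standard $\b$-extensions, where $\AA(\La)\subseteq\AA(\MM_\La)$ generally fails. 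The proof of~\cite[Lemma~4.3]{S5}, which the present paper imports (with the skew-to-self-dual passage by $\s$-fixed points and Glauberman, as you correctly anticipate), handles the general case by a different device: one chooses an auxiliary self-dual semisimple stratum $[\La'',n'',0,\b]$ with $\AA(\La''_\oe)=\AA(\La_\oe)$ and $\AA(\La'')\subseteq\AA(\La')$ --- such a $\La''$ exists but in general lies outside the affine class of $\La$ --- and combines the containment case for $(\La'',\La')$ with a transfer of extensions between $\La$ and $\La''$, which have equal intersections with $\B$; the quantification over all such $\La''$ in Proposition~\ref{prop:etaLL'}(ii) is there precisely to make this independent of the choice. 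Your independence-of-$\La_1$ check inherits the same flaw, so the general case of the lemma is not established by your proposal.
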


%%%%%%%%%%%%%%%%%%%%%%%%%%%%%%%%%%%%%%%%%%%%%%%%%%%%%%%%%%%%%%%%%%%%%%%%
\subsection{} For~$[\La,n,0,\b]$ a self-dual semisimple stratum, we define 
a related self-dual~$\oe$-lattice sequence~$\MM_\La$ as follows. Recall 
that we have the decomposition~$\V=\bigoplus_{i\in\I}\V^i$ 
and~$\I=\I_-\cup\I_0\cup\I_+$. For~$i\in\I$,~$r\in\ZZ$ and~$s=0,1$, we put
\[
\MM_\La^i(2r+s)=\begin{cases}
\p_{\E_i}^r\La^i(0)&\hbox{ if }i\in\I_+,\\
\p_{\E_i}^r\La^i(s)&\hbox{ if }i\in\I_0,\\
\p_{\E_i}^r\La^i(1)&\hbox{ if }i\in\I_-.
\end{cases}
\]
Then~$\MM_\La:=\bigoplus_{i\in\I}\MM_\La^i$ is a self-dual~$\oe$-lattice 
sequence on~$\V$ with the property that~$\AA(\MM_\La)\cap\B_\b$ is a 
maximal self-dual~$\oe$-order in~$\B_\b$.

Now we can define the notion of a standard~$\b$-extension. 

\begin{defi}[{\cite[Definition~4.5]{S5}}]
Let~$[\La,n,0,\b]$ be a self-dual semisimple stratum, 
let~$\th\in\Cc_-(\La,0,\b)$ and let~$\eta$ be the Heisenberg 
representation containing~$\th$.
\begin{enumerate}
\item Suppose~$\AA(\La_\oe)$ is a maximal self-dual~$\oe$-order in~$\B$. 
Then a representation~$\k$ of~$\J^+$ is called 
a~\emph{(standard) $\b$-extension of~$\eta$} if, for~$\La^\sm$ any 
self-dual~$\oe$-lattice sequence such that~$\AA(\La^\sm_\oe)$ is a 
minimal self-dual~$\oe$-order contained in~$\AA(\La_\oe)$,
it is an extension of the representation~$\eta_{\La^\sm,\La}$ of 
Proposition~\ref{prop:etaLL'}.
\item In general, a representation~$\k$ of~$\J^+$ is called 
a~\emph{standard $\b$-extension of~$\eta$} if there is 
a~$\b$-extension~$\k_\MM$ of~$\eta_\MM=\tau_{\La,\MM_\La,\b}(\eta)$ such 
that~$\BB_{\La,\MM_\La}(\k)=\k_\MM|_{\J^+_{\La,\MM_\La}}$. In this case
we say that~$\k_\MM$ is \emph{compatible with~$\k$}.
\end{enumerate}
\end{defi}

We will often say that~$\k$ is a \emph{standard~$\b$-extension of~$\th$}, 
since~$\eta$ is determined by~$\th$. We will also say that the 
restriction to~$\J$ (respectively~$\J^\so$) 
of a standard~$\b$-extension~$\k$ is a~\emph{standard $\b$-extension 
of~$\th$ to~$\J$} (respectively~$\J^\so$).

We also remark that~$\b$-extensions of a semisimple 
character~$\tth\in\Cc(\La,0,\b)$ for~$\tG$ may be defined in the same way. 
This generalizes the construction for simple characters and strict lattice 
sequences in~\cite[\S5.2]{BK}.

%%%%%%%%%%%%%%%%%%%%%%%%%%%%%%%%%%%%%%%%%%%%%%%%%%%%%%%%%%%%%%%%%%%%%%%%
\subsection*{}{\bf Iwahori decompositions}
%%%%%%%%%%%%%%%%%%%%%%%%%%%%%%%%%%%%%%%%%%%%%%%%%%%%%%%%%%%%%%%%%%%%%%%%
\subsection{}
Let~$[\La,n,0,\b]$ be a semisimple stratum in~$\A$ with associated 
splitting~$\V=\bigoplus_{i\in\I}\V^i$ and 
let~$\V=\bigoplus_{j=1}^m \W_j$ be a decomposition into subspaces which 
is~\emph{properly subordinate} to~$[\La,n,0,\b]$ in the sense 
of~\cite[Definition~5.1]{S5}: that is, each~$\W_j\cap\V^i$ is 
an~$\E_i$-subspace of~$\V^i$ 
and~$\W_j=\bigoplus_{i\in\I}\left(\W_j\cap\V^i\right)$, we have
\[
\La(r)=\bigoplus_{j=1}^m \left(\La(r)\cap\W_j\right), 
\quad\hbox{for all }r\in\ZZ,
\]
and, for each~$r\in\ZZ$ and~$i\in\I$, there is at most one~$j$ such that
\[
\left(\La(r)\cap\W_j\cap\V^i\right)\supsetneq 
\left(\La(r+1)\cap\W_j\cap\V^i\right).
\]
Denote by~$\til\M$ the Levi subgroup of~$\tG$ which is the stabilizer 
of the decomposition~$\V=\bigoplus_{j=1}^m \W_j$ and let~$\til\P$ be 
any parabolic subgroup with Levi component~$\til\M$ and unipotent 
radical~$\til\U$.

By~\cite[Proposition~5.2]{S5}, the groups~$\til\J$,~$\til\J^1$ 
and~$\til\H^1$ have Iwahori decompositions with respect 
to~$(\til\M,\til\P)$ and we put
\[
\til\H^1_{\til\P}=\til\H^1\left(\til\J^1\cap\til\U\right),\quad 
\til\J^1_{\til\P}=\til\H^1\left(\til\J^1\cap\til\P\right),\quad
\hbox{ and }\quad 
\til\J_{\til\P}=\til\H^1\left(\til\J\cap\til\P\right).
\]
For~$\tth\in\Cc(\La,0,\b)$ we define the character~$\tth_{\til\P}$ 
of~$\til\H^1_{\til\P}$ by
\[
\tth_{\til\P}(hj)=\tth(h),\quad\hbox{for }h\in\til\H^1,\ 
j\in\til\J^1\cap\til\U.
\]
This is well-defined.

\begin{lemm}[{\cite[Corollary~5.7,~Lemma~5.8]{S5}}]
Let~$\tth\in\Cc(\La,0,\b)$ and let~$\til\eta$ be the corresponding
representation of~$\til\J^1$. Then
\begin{enumerate}
\item the intertwining of~$\tth_{\til\P}$ is given 
by~$\I_{\tG}(\tth_{\til\P})=\til\J_{\til\P}^1\tG_\E\til\J_{\til\P}^1$;
\item there is a unique irreducible representation~$\til\eta_{\til\P}$ 
of~$\til\J^1_{\til\P}$ which contains~$\tth_{\til\P}$; 
moreover,~$\I_{\tG}(\til\eta_{\til\P})=
\til\J^1_{\til\P}\tG_\E\til\J^1_{\til\P}$ 
and~$\til\eta\simeq\Ind_{\til\J_{\til\P}^1}^{\til\J^1}\til\eta_{\til\P}$.
\end{enumerate}
\end{lemm}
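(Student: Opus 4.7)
The plan is to mirror the argument used in the skew case in~\cite[\S5.1]{BK} and~\cite[Corollary~5.7,~Lemma~5.8]{S5}, with Lemma~\ref{lem:tth} furnishing the required input for general semisimple characters. The key technical tool is the Iwahori decomposition~$\til\J^1=(\til\J^1\cap\til\U^-)(\til\J^1\cap\til\M)(\til\J^1\cap\til\U)$ (and similarly for~$\til\H^1$), guaranteed by the properly subordinate hypothesis on~$\V=\bigoplus_j\W_j$, together with the standard Heisenberg structure on the quotient~$\til\J^1/\til\H^1$.

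For part (i), the inclusion~$\til\J^1_{\til\P}\tG_\E\til\J^1_{\til\P}\subseteq\I_{\tG}(\tth_{\til\P})$ is the easy direction: elements of~$\til\J^1_{\til\P}$ normalize~$\tth_{\til\P}$, since~$\til\J^1$ normalizes~$\tth$ and the~$(\til\J^1\cap\til\U)$-factor lies in the kernel of~$\tth_{\til\P}$; meanwhile,~$\tG_\E\subseteq\I_{\tG}(\tth)$ by Lemma~\ref{lem:tth}, and under the properly subordinate condition~$\tG_\E$ preserves each piece of the Iwahori decomposition, giving~$\tG_\E\subseteq\I_{\tG}(\tth_{\til\P})$. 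Conversely, if~$g\in\I_{\tG}(\tth_{\til\P})$, then restriction to~$\til\H^1$ shows~$g\in\I_{\tG}(\tth)=\til\J^1\tG_\E\til\J^1$, so one may write~$g=j_1yj_2$ with~$j_i\in\til\J^1$ and~$y\in\tG_\E$. The Iwahori decomposition of each~$j_i$ together with the fact that~$\tG_\E$ preserves this decomposition allows one to absorb the~$(\til\J^1\cap\til\P)$-components of~$j_1$ and the~$(\til\J^1\cap\til\P^-)$-components of~$j_2$ into the flanking copies of~$\til\J^1_{\til\P}$. The remaining obstruction is to eliminate the stray factors in~$\til\J^1\cap\til\U^-$ on the left and~$\til\J^1\cap\til\U$ on the right: this is handled by a non-vanishing argument, using that a non-zero intertwining operator between~$\tth_{\til\P}$ and~${}^g\tth_{\til\P}$ must vanish on any element that does not conjugate the relevant kernels compatibly; combined with the properly subordinate hypothesis, this forces the remaining factors into~$\til\J^1_{\til\P}$.

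For part (ii), the quotient~$\til\J^1/\til\H^1$ carries a non-degenerate alternating form~$(x,y)\mapsto\tth([x,y])$, and~$\til\eta$ is characterized as the unique irreducible representation of~$\til\J^1$ containing~$\tth$, of dimension~$[\til\J^1:\til\H^1]^{1/2}$. The Iwahori decomposition shows that the images of~$\til\J^1\cap\til\U$ and~$\til\J^1\cap\til\U^-$ in this symplectic space are totally isotropic and in perfect duality, while~$\til\J^1\cap\til\M$ contributes an orthogonal symplectic direct summand; hence the image of~$\til\J^1_{\til\P}/\til\H^1_{\til\P}$ is a maximal totally isotropic subspace therein. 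Applying Stone--von Neumann to the resulting Heisenberg quotient produces a unique irreducible extension~$\til\eta_{\til\P}$ of~$\tth_{\til\P}$ to~$\til\J^1_{\til\P}$. A dimension count gives~$[\til\J^1:\til\J^1_{\til\P}]\dim\til\eta_{\til\P}=\dim\til\eta$, whence~$\Ind_{\til\J^1_{\til\P}}^{\til\J^1}\til\eta_{\til\P}$ is irreducible and contains~$\tth$, so by uniqueness in Lemma~\ref{lem:tth} it is equal to~$\til\eta$. The intertwining of~$\til\eta_{\til\P}$ is then inherited from that of~$\tth_{\til\P}$ computed in (i): any intertwiner of~$\til\eta_{\til\P}$ intertwines~$\tth_{\til\P}$ by restriction, while the uniqueness of~$\til\eta_{\til\P}$ above~$\tth_{\til\P}$ promotes intertwiners of the character to intertwiners of the representation.

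The main obstacle is the refinement step in part (i): upgrading the factorization~$g=j_1yj_2$ from~$\til\J^1$ to~$\til\J^1_{\til\P}$ requires careful bookkeeping of the unipotent components of the Iwahori decomposition, for which the \emph{properly subordinate} hypothesis (rather than merely \emph{subordinate}) is essential. This is the technical heart of the adaptation from the skew simple case, and mirrors the analysis carried out in~\cite[\S5]{S5}.
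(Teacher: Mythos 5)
The paper gives no argument for this lemma: it is quoted directly from~\cite[Corollary~5.7, Lemma~5.8]{S5}, so your sketch has to be measured against the proof in \emph{loc.\ cit.}, and there it has a genuine gap, located exactly at the point which makes the cited result non-trivial. In part~(i) you dispose of the inclusion~$\tG_\E\subseteq\I_{\tG}(\tth_{\til\P})$ as the ``easy direction'' on the grounds that the properly subordinate hypothesis makes~$\tG_\E$ preserve the pieces of the Iwahori decomposition. This is false: $\tG_\E=\B^\times$ lies in~$\til\L_\b$ but not in~$\til\M$ (the subspaces~$\W_j$ are merely sums of~$\E_i$-subspaces and are not stable under the centralizer), and even elements of~$\til\M$ need not normalize~$\til\J^1$, so there is no equivariance of the factors~$\til\J^1\cap\til\U$, $\til\J^1\cap\til\M$, $\til\J^1\cap\til\U^-$ to appeal to. Proving that \emph{every}~$y\in\tG_\E$ --- including elements far from~$\til\M$ --- intertwines~$\tth_{\til\P}$ is precisely the substance of~\cite[\S5.1]{S5} (it is what makes the cover machinery work later), and it is carried out there by an explicit analysis of the one-dimensional intertwining spaces~$\I_y(\tth)$ and of the associated induced representations, not by any compatibility of~$\tG_\E$ with the decomposition. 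The reverse inclusion suffers from the same problem: after writing~$g=j_1yj_2\in\I_\tG(\tth)=\til\J^1\tG_\E\til\J^1$ one cannot simply ``absorb'' the components in~$\til\J^1\cap\til\U^-$ and~$\til\J^1\cap\til\U$, and your appeal to an unspecified ``non-vanishing argument'' is not a proof; this refinement again rests on the explicit description of the intertwining operators.

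Part~(ii) follows the standard route (Heisenberg quotient, Stone--von Neumann, the index identity~$[\til\J^1:\til\J^1_{\til\P}]=[\til\H^1_{\til\P}:\til\H^1]$ giving the dimension count, and promotion of intertwining from~$\tth_{\til\P}$ to~$\til\eta_{\til\P}$ by uniqueness), and would be acceptable once~(i) is available; but note one slip: the image of~$\til\J^1_{\til\P}$ in the symplectic space~$\til\J^1/\til\H^1$ is \emph{not} a maximal totally isotropic subspace. The totally isotropic subspace in play is the image of~$\til\H^1_{\til\P}$, that is of~$\til\J^1\cap\til\U$; what one actually needs is that the pairing induced on~$\til\J^1_{\til\P}/\til\H^1_{\til\P}$ is nondegenerate, which is what produces the unique irreducible representation~$\til\eta_{\til\P}$ containing~$\tth_{\til\P}$. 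As it stands, then, the proposal reproduces the surface structure of the cited proof but omits (and in one place misstates) the intertwining computation that is its technical heart.
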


%%%%%%%%%%%%%%%%%%%%%%%%%%%%%%%%%%%%%%%%%%%%%%%%%%%%%%%%%%%%%%%%%%%%%%%%
\subsection{}
Now suppose~$[\La,n,0,\b]$ is a self-dual semisimple stratum 
and~$\V=\bigoplus_{j=-m}^m\W_j$ is a properly subordinate 
\emph{self-dual} decomposition, that 
is, the orthogonal complement of~$\W_j$ is~$\bigoplus_{k\ne-j}\W_k$, 
for each~$j$. (We allow the possibility that~$\W_0=\{0\}$.) We use 
the notation of the previous paragraph and put~$\M=\til\M\cap\G$, a 
Levi subgroup of~$\G$, and, choosing~$\til\P$ to be a~$\s$-stable 
parabolic subgroup of~$\tG$, put~$\P=\til\P\cap\G=\M\U$, a parabolic 
subgroup of~$\G$. Then~$\H^1$ has an Iwahori decomposition with respect 
to~$(\M,\P)$, while~$\til\H^1_{\til\P}$ is stable under the involution, 
and we 
put~$\H^1_\P=\til\H^1_{\til\P}\cap\G=\H^1\left(\J^1\cap\U\right)$. 
Similarly, we have~$\J^1_\P$,~$\J_\P$ and~$\J_\P^+$, as well 
as~$\J_\P^\so=\H^1\left(\J^\so\cap\P\right)$.

For~$\th\in\Cc_-(\La,0,\b)$, define the character~$\th_\P$ 
of~$\H^1_\P$ by
\[
\th_\P(hj)=\th(h),\quad\hbox{for }h\in\H^1,\ 
j\in\J^1\cap\U;
\]
thus, if~$\th=\tth|_{\H^1}$ for some~$\tth\in\Cc(\La,0,\b)^\Si$, 
then~$\th_\P=\tth_{\til\P}|_{\H^1_\P}$. Exactly as 
in~\cite[Lemma~5.12]{S5}, we get:

\begin{lemm}\label{lem:thetaP}
Let~$\th\in\Cc_-(\La,0,\b)$ and let~$\eta$ be the corresponding
representation of~$\J^1$. Then
\begin{enumerate}
\item the intertwining of~$\th_\P$ is given 
by~$\I_{\G}(\th_\P)=\J_\P^1\G_\E\J_\P^1$;
\item there is a unique irreducible representation~$\eta_\P$ 
of~$\J_\P^1$ which contains~$\th_\P$; if~$\th=\tth|_{\H^1}$, 
for~$\tth\in\Cc(\La,0,\b)^\Si$ and~$\til\eta$ is the corresponding 
representation of~$\til\J^1$, then~$\eta_\P$ is the Glauberman transfer 
of~$\til\eta_{\til\P}$;
\item with~$\eta_\P$ as in~(ii), we 
have~$\eta\simeq\Ind_{\J_\P^1}^{\J^1}\eta_\P$ and
\[
\dim\I_g(\eta_\P)\ =\ \begin{cases}
1 &\hbox{ if }g\in\J_\P^1\G_E^+\J_\P^1, \\
0 &\hbox{ otherwise.}
\end{cases}
\]
\end{enumerate}
\end{lemm}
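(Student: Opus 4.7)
The plan is to deduce Lemma~\ref{lem:thetaP} from the preceding $\tG$-version by taking $\Si$-fixed points and applying the Glauberman correspondence, exactly as Lemma~\ref{lem:HeisenbergG} was obtained from Lemma~\ref{lem:tth}. Since $p$ is odd and the relevant groups $\til\H^1_{\til\P}, \til\J^1_{\til\P}$ are pro-$p$ groups normalised by $\s$ (indeed, these Iwahori-type subgroups are $\s$-stable because we chose $\til\P$ to be $\s$-stable and the splitting to be self-dual), Glauberman provides a bijection between $\s$-fixed irreducible representations of these groups and irreducible representations of their $\Si$-fixed subgroups $\H^1_\P$ and $\J^1_\P$.

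For part~(ii), I would first note that if $\th=\tth|_{\H^1}$ with $\tth\in\Cc(\La,0,\b)^\Si$, then $\tth_{\til\P}$ is manifestly $\s$-stable (since $\tth$ and $\til\U$ are both $\s$-stable). By the preceding lemma, the unique irreducible $\til\eta_{\til\P}$ of $\til\J^1_{\til\P}$ containing $\tth_{\til\P}$ is then also $\s$-stable, so its Glauberman transfer $\eta_\P$ is an irreducible representation of $\J^1_\P$ whose restriction to $\H^1_\P$ contains $\th_\P=\tth_{\til\P}|_{\H^1_\P}$ (the latter equality being immediate from the definitions). Uniqueness follows because any irreducible of $\J^1_\P$ containing $\th_\P$ lifts under Glauberman to a $\s$-stable irreducible of $\til\J^1_{\til\P}$ containing $\tth_{\til\P}$, hence equals $\til\eta_{\til\P}$ by the preceding lemma.

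For part~(i) and the intertwining in part~(iii), I would combine the known intertwining of $\tth_{\til\P}$ and $\til\eta_{\til\P}$ over $\tG$ with two standard ingredients. First, the intertwining over $\G$ is contained in the intersection of $\til\J^1_{\til\P}\tG_\E\til\J^1_{\til\P}$ with $\G$: this gives the upper bound via the Mackey/double-coset argument that works for Iwahori-decomposed subgroups (exactly as in the skew case, using that the Iwahori decomposition is compatible with $\Si$ and that $\tG_\E\cap\G=\G_\E^+$). Second, Glauberman preserves intertwining dimensions for $g\in\G$ because the spaces $\Hom_{\til\J^1_{\til\P}\cap{}^g\til\J^1_{\til\P}}(\til\eta_{\til\P},{}^g\til\eta_{\til\P})$ carry an action of $\Si$ and Glauberman matches $\Si$-fixed vectors. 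Combined with the fact that $\G^+_\E\subseteq\I_\G(\th_\P)$ (since it normalises $\th$ and preserves $\U$), this yields the equalities in (i) and in the intertwining formula of (iii).

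Finally for the induction isomorphism $\eta\simeq\Ind_{\J^1_\P}^{\J^1}\eta_\P$, I would apply Frobenius reciprocity and the uniqueness statement in Lemma~\ref{lem:HeisenbergG}: the induced representation contains $\th_\P$ and hence, on restriction to $\H^1$, contains $\th$, so it contains $\eta$; comparing dimensions (via $[\J^1:\J^1_\P]=[\H^1(\J^1\cap\U^-):\H^1]$, which follows from the Iwahori decomposition) forces equality. The main obstacle I anticipate is the careful bookkeeping for the double-coset/Mackey step that identifies $(\til\J^1_{\til\P}\tG_\E\til\J^1_{\til\P})\cap\G$ with $\J^1_\P\G_\E\J^1_\P$; this requires knowing that every $\s$-fixed double coset has a $\s$-fixed representative, which in turn uses that the relevant quotients are pro-$p$ and $p$ is odd, exactly the hypothesis already used for Glauberman.
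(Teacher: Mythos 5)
Your proposal is correct and follows essentially the same route as the paper: the paper proves this lemma by invoking the skew-case argument of~\cite[Lemma~5.12]{S5} verbatim, which is precisely the deduction from the $\tG$-statement by taking $\s$-fixed points, using the Glauberman correspondence for the pro-$p$ groups $\til\H^1_{\til\P},\til\J^1_{\til\P}$ (odd residual characteristic) and the fact that $\s$-stable double cosets admit $\s$-fixed representatives, together with the dimension count for $\eta\simeq\Ind_{\J^1_\P}^{\J^1}\eta_\P$. Your sketch reproduces exactly these ingredients, so no further comment is needed.
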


%%%%%%%%%%%%%%%%%%%%%%%%%%%%%%%%%%%%%%%%%%%%%%%%%%%%%%%%%%%%%%%%%%%%%%%%
\subsection{} We continue with the notation of the previous paragraph. 
Let~$\k$ be a standard~$\b$-extension of~$\eta$ to~$\J^+$. We form 
the natural representation~$\k_\P$ of~$\J_\P^+$ on the space 
of~$\J\cap\U$-fixed vectors in~$\k$; then~$\k_\P$ is an extension 
of~$\eta_\P$ and~$\Ind_{\J_P^+}^{\J^+}\k_\P\simeq\k$. Similar results 
apply to the restriction of~$\k_\P$ to~$\J_\P$ and to~$\J^\so_\P$.

We can also make the same construction for a~$\b$-extension~$\til\k$ of 
a semisimple character~$\tth$ for~$\tG$, thus obtaining a 
representation~$\til\k_{\til\P}$ of~$\til\J_{\til\P}$.

%%%%%%%%%%%%%%%%%%%%%%%%%%%%%%%%%%%%%%%%%%%%%%%%%%%%%%%%%%%%%%%%%%%%%%%%
\subsection{} Suppose~$[\La,n,0,\b]$ is a self-dual semisimple stratum, 
with associated Levi subgroup~$\L=\L_\b$ as in paragraph~\ref{S.semigroups}, 
which we identify with~$\G^0\times\prod_{i=1}^l\tG^i$. 
Note that the associated decomposition~$\V=\bigoplus_{i=-l}^l\V^i$ is 
properly subordinate to the stratum. Let~$\Q$ be a parabolic subgroup 
of~$\G$ with Levi component~$\L$. We write~$\H^1_\L=\H^1_\Q\cap\L=\H^1\cap\L$; 
then~$\H^1_\L=\H^1(\b_0,\La_0)\times\prod_{i=1}^l\til\H(\b_i,\La_i)$. 
Similarly we have~$\J^1_\L$, etc.

For~$\th\in\Cc_-(\La,0,\b)$ a semisimple character we 
put~$\th_\L=\th|_{\H^1_\L}$. 
Then~$\th_\L=\th_0\otimes\bigotimes_{i=1}^l\tth_i$, 
with~$\th_0$ a skew %self-dual? 
semisimple character in~$\Cc_-(\La_0,0,\b_0)$, 
and~$\tth_i$ a \emph{simple} character in~$\Cc(\La_i,0,2\b_i)$. 

By a~\emph{standard~$\b$-extension of~$\th_\L$}, we mean a 
representation~$\k_\L$ of~$\J^+_\L$ (or~$\J_\L$,~$\J^\so_\L$) of the 
form~$\k_\L=\k_0\otimes\bigotimes_{i=1}^l\til\k_i$, with~$\k_0$ a 
standard~$\b_0$-extension of~$\th_0$ and~$\til\k_i$ 
a~$2\b_i$-extension of~$\tth_i$.

%%%%%%%%%%%%%%%%%%%%%%%%%%%%%%%%%%%%%%%%%%%%%%%%%%%%%%%%%%%%%%%%%%%%%%%%
\subsection{} We continue with notation of the previous paragraph.

Let~$\V=\bigoplus_{j=-m}^m\W_j$ be another self-dual decomposition properly 
subordinate to~$[\La,n,0,\b]$ and~$\M$ the Levi subgroup of~$\G$ 
stabilizing the decomposition. We suppose also that~$\M\subseteq\L$ and 
let~$\P=\M\U\subseteq\Q$ be a parabolic subgroup of~$\G$ with Levi 
component~$\M$. Then~$\P\cap\L=\M(\U\cap\L)$ is a parabolic subgroup of~$\L$ 
with Levi component~$\M$.

Let~$\th\in\Cc_-(\La,0,\b)$ be a semisimple character and let~$\k$ be a 
standard~$\b$-extension to~$\J$. We form the representation~$\k_\P$ 
of~$\J_\P$ as above, and also the representation~$\k_\Q$ of~$\J_\Q$. 
Note that, since~$\M\subseteq\L$, we have~$\J_\P\subseteq\J_\Q$, 
and~$\k_\P$ can be viewed as the natural representation on 
the~$\J\cap\U$-fixed vectors in~$\k_\Q$. 

We also have~$\J_\Q\cap\L=\J\cap\L$, since~$\G_\E\subseteq\L$,
and we can consider the natural representation of~$\J_\P\cap\L$ on 
the~$\J\cap\L\cap\U$-fixed vectors in~$\k_\Q|_{\J\cap\L}$. This is 
naturally isomorphic to the restriction~$\k_{\P}|_{\J_\P\cap\L}$. We 
will need the following compatibility result.

\begin{prop}\label{prop:kappaL}
In the situation above, the restriction
$\k_{\P}|_{\J_\P\cap\L}$ takes the form $\k'_{\P\cap\L}$, where
$\k':=\k_\Q|_{\J\cap\L}$ is a standard $\b$-extension of~$\th_\L$ 
to~$\J_\L=\J\cap\L$.
\end{prop}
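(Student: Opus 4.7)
The proposition has two claims: the identity $\k_\P|_{\J_\P\cap\L} = \k'_{\P\cap\L}$ and the statement that $\k'$ is a standard $\b$-extension of $\th_\L$. The plan is to treat these separately.

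The first claim reduces to taking fixed vectors in stages. Writing $\U_\Q$ for the unipotent radical of~$\Q$, the inclusion $\P\subseteq\Q$ gives the factorization $\U = (\U\cap\L)\U_\Q$, hence $\J\cap\U = (\J\cap\L\cap\U)(\J\cap\U_\Q)$. The space of $\J\cap\U$-fixed vectors in~$\k$ therefore coincides with the space of $\J\cap\L\cap\U$-fixed vectors inside the representation space of~$\k_\Q$. Since $\P\cap\L = \M(\U\cap\L)$ is a parabolic of~$\L$ with unipotent radical $\U\cap\L$, this latter space carries precisely the representation $\k'_{\P\cap\L}$. The action of $\J_\P\cap\L$ agrees on both descriptions, giving the required identification.

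For the second claim, I would first show that $\k'|_{\J^1\cap\L}$ equals the Heisenberg-type representation $\eta_\L = \eta_0\otimes\bigotimes_{i=1}^l\til\eta_i$ associated with $\th_\L$, where $\eta_0$ is the Heisenberg extension of $\th_0$ in $\G^0$ and $\til\eta_i$ is the Heisenberg extension of $\tth_i$ in~$\tG^i$. This follows from the intertwining characterizations (Lemma~\ref{lem:thetaP}(iii) and its $\tG$-analog applied factor by factor to $\L = \G^0\times\prod_{i=1}^l\tG^i$) together with the inclusion $\G_\E \subseteq \L$. Since $\k_\Q$ extends $\eta_\Q$, the restriction $\k'$ extends $\eta_\L$. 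Using the diagonal action of $\G_\E$ on the product $\J\cap\L = \J_0\times\prod\til\J_i$ and the intertwining properties of standard $\b$-extensions, $\k'$ must then factorize as $\k_0\otimes\bigotimes\til\k_i$ where each factor is an extension of the corresponding Heisenberg piece.

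It then remains to verify that $\k_0$ is a standard $\b_0$-extension and each $\til\k_i$ is a $2\b_i$-extension. The structural point is that the lattice sequence $\MM_\La$ is defined componentwise along the full splitting $\V = \bigoplus_{i\in\I}\V^i$, which refines the $\L$-splitting; so its restriction to $\V^0$ and to each $\V^i$ ($i\ge 1$) coincides with the corresponding $\MM_{\La_0}$ or $\MM_{\La_i}$, and similarly any minimal self-dual $\oe$-order $\La^\sm\subseteq\La$ decomposes factor-wise. Proposition~\ref{prop:etaLL'} applied on each factor, together with the uniqueness of the bijection $\BB_{\La,\MM_\La}$, then yields the required defining property. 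The main obstacle is this last compatibility: one must check that $\BB_{\La,\MM_\La}$ commutes with restriction to~$\L$, in the sense that $\BB_{\La,\MM_\La}(\k)$ restricted to the $\L$-part of $\J^+_{\La,\MM_\La}$ matches the tensor product of the corresponding bijections on the factors. This can be deduced from the characterization of $\BB_{\La,\MM_\La}$ as the unique extension whose induced representation on $\P^+(\La_\oe)\P_1(\La)$ agrees with that of the input, but requires careful tracking of the product structure through the various intermediate subgroups.
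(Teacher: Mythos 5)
Your decomposition of the statement into the identity $\k_\P|_{\J_\P\cap\L}=\k'_{\P\cap\L}$ and the standardness of $\k'$ is exactly right, and the argument for the first part---taking $\J\cap\U$-fixed vectors in stages, via $\J\cap\U=(\J\cap\L\cap\U)(\J\cap\U_\Q)$---is sound; the paper treats this as an observation in the paragraph preceding the statement. The factorization of $\k'$ as a tensor product over the factors of $\L$ is also fine (it is an irreducible representation of a direct product).

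The genuine gap is precisely the one you flag yourself as the ``main obstacle'': the compatibility of the bijection $\BB_{\La,\MM_\La}$ with restriction to $\L$. You propose to deduce this from the characterization of $\BB_{\La,\La'}(\k)$ as the unique extension inducing the same irreducible on $\P^+(\La_\oe)\P_1(\La)$, but that characterization (the second clause of the lemma quoted from \cite[Lemma~4.3]{S5}) holds only under the hypothesis $\AA(\La)\subseteq\AA(\La')$, which fails for $\La'=\MM_\La$ in general: one has $\AA(\La_\oe)\subseteq\AA(\MM_{\La,\oe})$ at the level of $\oe$-orders, but not a containment of the full hereditary orders in $\A$. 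Without that nesting, $\BB_{\La,\MM_\La}$ is built as a composition through intermediate lattice sequences, and its compatibility with restriction to $\L$ is not a matter of ``careful tracking'' but a substantive statement. The paper's own proof circumvents this by first handling the case where $\AA(\La_\oe)$ is a maximal self-dual order (citing \cite[Proposition~6.3]{S5}), and then reducing the general case to that one by comparing against $\k_\MM$ on $\MM_\La$ and invoking \cite[Proposition~1.17]{Bl}; the latter citation is exactly the missing ingredient in your argument, and establishing it from scratch would amount to reproving a nontrivial part of Blondel's compatibility analysis.
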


\begin{proof}
We need to check that~$\k':=\k_\Q|_{\J\cap\L}$ is a 
standard~$\b$-extension. If~$\AA(\La_{\oe})$ is a maximal self-dual 
order in~$\B$ (in which case~$\L=\M$) then this follows 
from~\cite[Proposition~6.3]{S5}. 

For the general case, denote by~$\k_\MM$ 
the unique~$\b$-extension of~$\J_\MM=\J(\b,\MM_\La)$ compatible with~$\k$; 
then~$\k_{\MM,\Q}|_{\J_\MM\cap\L}$ is a (standard)~$\b$-extension by the 
previous case, and~$\k_\Q|_{\J\cap\L}$ is compatible 
with~$\k_{\MM,\Q}|_{\J_\MM\cap\L}$, by~\cite[Proposition~1.17]{Bl}. 
Thus~$\k'$ is indeed a standard~$\b$-extension.
\end{proof}

%%%%%%%%%%%%%%%%%%%%%%%%%%%%%%%%%%%%%%%%%%%%%%%%%%%%%%%%%%%%%%%%%%%%%%%%
%%%%%%%%%%%%%%%%%%%%%%%%%%%%%%%%%%%%%%%%%%%%%%%%%%%%%%%%%%%%%%%%%%%%%%%%
%%%%%%%%%%%%%%%%%%%%%%%%%%%%%%%%%%%%%%%%%%%%%%%%%%%%%%%%%%%%%%%%%%%%%%%%
%%%%%%%%%%%%%%%%%%%%%%%%%%%%%%%%%%%%%%%%%%%%%%%%%%%%%%%%%%%%%%%%%%%%%%%%

\section{Cuspidal types}\label{S.cuspidal}

In this section we recall the notions of cuspidal types from~\cite{BK,S5}, 
correcting along the way a mistake in the definition in~\cite{S5} pointed
out by Laure Blasco and Corinne Blondel. 
%Maybe also simple types for $\GL_N$ from~\cite{BK,SS6}, if necessary.

%%%%%%%%%%%%%%%%%%%%%%%%%%%%%%%%%%%%%%%%%%%%%%%%%%%%%%%%%%%%%%%%%%%%%%%%
\subsection{}
We recall from~\cite{BK} the definition of a \emph{simple type} and
of a \emph{maximal simple type} for~$\tG$; we call the latter a 
\emph{cuspidal type}. The generalizations to the case of lattice sequences 
come from~\cite{SS6}.

\begin{defi}%\label{def:simpletype}
A \emph{simple type} for~$\tG$ is a pair~$(\til\J,\til\l)$, 
where~$\til\J=\til\J(\b,\La)$ for some simple stratum~$[\La,n,0,\b]$ 
such that
\begin{itemize}
\item[$\bullet$] $\til\P(\La_{\oe})/\til\P_1(\La_{\oe})\simeq\GL_f(k_\E)^e$, for some 
positive integers~$f,e$,
\end{itemize}
and~$\til\l=\til\k\otimes\til\tau$, for~$\til\k$ a~$\b$-extension of some 
simple character~$\tth\in\Cc(\La,0,\b)$
and~$\til\tau$ the inflation of an irreducible cuspidal 
representation~$\til\tau_0^{\otimes e}$ 
of~$\til\J/\til\J^1\simeq\GL_f(k_\E)^e$.

A \emph{cuspidal type} for~$\tG$ is a simple type for 
which~$\til\P(\La_{\oe})$ is a maximal parahoric subgroup of~$\tG_\E$; 
that is,~$e=1$ in the notation above.
\end{defi}

Every irreducible cuspidal representation~$\til\pi$ of~$\tG$ contains a 
cuspidal type~$(\til\J,\til\l)$. Then~$\til\pi$ is irreducibly compactly 
induced from a representation of~$\E^\times\til\J$ containing~$\til\l$ 
and the cuspidal type~$(\til\J,\til\l)$ is a~$[\tG,\til\pi]_\tG$-type.

The following proposition can be extracted from the results 
in~\cite[\S\S7--8]{BK} (see also~\cite[Proposition~5.15, Corollaire~5.20]{SS4}).

\begin{prop}\label{prop:cuspGL}
Let~$[\La,n,0,\b]$ be a simple stratum,~$\tth\in\Cc(\La,0,\b)$ 
a simple character, and~$\til\k$ a $\b$-extension. Let~$\til\tau$ be 
(the inflation to~$\til\J$ of) an irreducible representation 
of~$\til\P(\La_\oe)/\til\P_1(\La_\oe)$. Suppose a cuspidal 
representation~$\til\pi$ of~$\tG$ contains~$\tth$ 
and~$\til\k\otimes\til\tau$. Then~$\til\P(\La_\oe)$ is a maximal 
parahoric subgroup of~$\tG_\E$, and~$\til\tau$ is cuspidal; that 
is,~$(\til\J,\til\k\otimes\til\tau)$ is a cuspidal type.
\end{prop}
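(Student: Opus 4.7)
The plan is to argue by contrapositive: if $\til\P(\La_\oe)$ is non-maximal in $\tG_\E$, or if $\til\tau$ is non-cuspidal as a representation of the finite reductive quotient $\til\P(\La_\oe)/\til\P_1(\La_\oe)\simeq\GL_f(k_\E)^e$, then no cuspidal representation of $\tG$ can contain $\til\k\otimes\til\tau$. The mechanism is that in either case one exhibits a simple type for a proper Levi $\til\M$ of $\tG$, a suitable extension of which is a cover; containing such a cover forces the Jacquet module of $\til\pi$ with respect to a parabolic with Levi $\til\M$ to be nonzero, contradicting cuspidality.

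Assuming the failure, the Harish--Chandra cuspidal support of $\til\tau$ lies in a proper Levi subgroup $\til L_\circ$ of $\GL_f(k_\E)^e$: there is an irreducible cuspidal $\til\tau_\circ$ of $\til L_\circ$ and a parabolic $\til Q_\circ$ with Levi $\til L_\circ$ such that $\til\tau$ is a constituent of $\Ind_{\til Q_\circ}^{\GL_f(k_\E)^e}\til\tau_\circ$. Using the standard correspondence between parabolics of the reductive quotient of $\til\P(\La_\oe)$ and refinements of $\La$ within its affine class, I would choose a decomposition $\V=\bigoplus_{j=1}^m\W_j$ properly subordinate to $[\La,n,0,\b]$ realizing $\til Q_\circ$; let $\til\M$ be the Levi of $\tG$ stabilizing this decomposition, with parabolic $\til\P=\til\M\til\U$.

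Using the Iwahori decomposition of $\til\J$ from~\cite[Proposition~5.2]{S5} and the $\b$-extension compatibility for $\tG$ discussed after Proposition~\ref{prop:kappaL}, one checks that $\til\k|_{\til\J\cap\til\M}$ is a $\b$-extension in $\til\M$, and that $(\til\J\cap\til\M,\til\k|_{\til\J\cap\til\M}\otimes\til\tau_\circ)$ is a simple type for $\til\M$. The pair $(\til\J_{\til\P},\til\k_{\til\P}\otimes\til\tau_\circ)$ is then a cover of it in the sense of~\cite{BK1}: the positivity condition is verified by producing an invertible element in $\Hh(\tG,\til\k_{\til\P}\otimes\til\tau_\circ)$ supported on the double coset of a strongly positive uniformizer in $\tG_\E$. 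Since $\til\k\otimes\til\tau$ occurs as a constituent of $\Ind_{\til\J_{\til\P}}^{\til\J}(\til\k_{\til\P}\otimes\til\tau_\circ)$, the hypothesis that $\til\pi$ contains $\til\k\otimes\til\tau$ implies $\til\pi$ contains the cover, hence its Jacquet module relative to $\til\P$ is nonzero, contradicting cuspidality. The main obstacle is the verification that the refined pair is a cover and that the constituent relation holds; both are worked out for strict lattice sequences in~\cite[\S\S7--8]{BK} and extended to general lattice sequences in~\cite{SS6}.
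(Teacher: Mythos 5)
The overall strategy is sound and matches the argument the paper defers to (\cite[\S\S7--8]{BK}, \cite[Proposition~5.15, Corollaire~5.20]{SS4}): construct a cover over a proper Levi and deduce that the Jacquet module is nonzero. However, there is a genuine gap in the way you set up the contrapositive.

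You assume that if the conclusion fails, then ``the Harish--Chandra cuspidal support of $\til\tau$ lies in a proper Levi subgroup $\til L_\circ$ of $\GL_f(k_\E)^e$.'' This only covers the case where $\til\tau$ is non-cuspidal. But the conclusion has two parts, and it can fail with $\til\tau$ cuspidal and $e>1$, i.e.\ when $\til\P(\La_\oe)$ is a non-maximal parahoric: then the cuspidal support of $\til\tau$ is $\GL_f(k_\E)^e$ itself, and there is no proper $\til L_\circ$ to work with. This is precisely the case where $(\til\J,\til\k\otimes\til\tau)$ is a simple type which is not a cuspidal type, so it cannot be skipped. The repair is standard but must be made explicit: either (a) note that since $e>1$ there is a self-dual decomposition $\V=\bigoplus_{j=1}^e\W_j$ \emph{exactly} subordinate to the stratum, whose $\tG$-stabilizer $\til\M$ is already a proper Levi, and run the cover/Jacquet-module argument with $\til L_\circ=\GL_f(k_\E)^e$ and $\til\tau_\circ=\til\tau$; or (b) from the start compute the cuspidal support of $\til\tau$ inside $\til\P(\MM_{\La,\oe})/\til\P_1(\MM_{\La,\oe})\simeq\GL_{fe}(k_\E)$, the reductive quotient of a maximal parahoric of $\tG_\E$ containing $\til\P(\La_\oe)$, where $\GL_f(k_\E)^e$ is already a proper Levi whenever $e>1$. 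Either fix restores the implication ``$\til\M$ proper in $\tG$'' from the failure hypothesis. The remaining technical assertions you flag (the correspondence between parabolics of the reductive quotient and subordinate decompositions, the $\b$-extension compatibility on $\til\J\cap\til\M$, and the strong positivity verification for the cover) are indeed the content of \cite[\S\S7--8]{BK} and \cite{SS6}, so deferring them is reasonable.
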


\ignore{
%%%%%%%%%%%%%%%%%%%%%%%%%%%%%%%%%%%%%%%%%%%%%%%%%%%%%%%%%%%%%%%%%%%%%%%%
\subsection{} 
Now suppose~$N=eN_0$ and~$(\til\J_0,\til\l_0)$ is a cuspidal type 
for~$\tG_0\simeq\GL_{N_0}(\F)$, which is a~$[\tG_0,\til\pi_0]_{\tG_0}$-type, 
with associated stratum~$[\La_0,n_0,0,\b_0]$ in~$\End_\F(\V_0)$. We write 
$\til\l_0=\til\k_0\otimes\til\tau_0$, for $\til\k_0$ 
a~$\til\b_0$-extension of a simple character~$\tth_0\in\Cc(\La_0,0,\b_0)$ 
and~$\til\tau_0$ the inflation of an irreducible cuspidal 
representation of~$\til\J/\til\J^1\simeq\GL_f(k_{\E_0})$.
For~$j=1,\ldots,e$, let~$\La_j$ be an~$\of$-lattice sequence in~$\V_j:=\V_0$ 
in the affine 
class of~$\La_0$; we assume these all have the same period, and 
put~$n=\v_{\La_j}(\b_0)$. Put~$\La=\bigoplus_{j=1}^e \La_j$, 
an~$\of$-lattice sequence in~$\V=\bigoplus_{j=1}^e \V_j$; we assume this 
decomposition is properly subordinate to the stratum~$[\La,n,0,\b]$, 
where~$\b=\sum_{j=1}^e\ee_j\b_0\ee_j$ and~$\ee_j$ is the projection 
onto~$\V_j$. (It is possible to achieve this by 
changing the~$\La_j$ in the affine class of~$\La_0$.) Let~$\til\M$ be the 
stabilizer of the decomposition~$\V=\bigoplus_{j=1}^e \V_j$ and~$\til\P$ any
parabolic subgroup with Levi component~$\til\M$.
Now let~$\tth$ be the transfer to~$\La$ of the simple character~$\tth_0$. 
Then there is a unique~$\b$-extension~$\til\k$ of~$\tth$ such 
that~$\til\k_{\til\P}|_{\til\J_{\til\P}\cap\til\M}$ takes the 
form~$\til\k_0\otimes\cdots\otimes\til\k_0$. (Note that the 
group~$\til\J(\b_0,\La_0)$ depends only on the affine class of~$\La_0$ so 
this makes sense.) Moreover,~$\til\J_{\til\P}/\til\J_{\til\P}^1$ is 
isomorphic to the direct product of~$e$ copies 
of~$\til\J(\b_0,\La_0)/\til\J^1(\b_0,\La_0)$ so we can 
form~$\til\l_{\til\P}=\til\k_{\til\P}\otimes\left(\tau_0^{\otimes e}\right)$. 
If we know put~$\til\l=\Ind_{\til\J_{\til\P}}^{\til\J}\til\l_{\til\P}$, 
then~$(\til\J,\til\l)$ is a simple type, and every simple type for~$\tG$ 
as in Definition~\ref{def:simpletype} arises in this way. More importantly, 
we have the following, again from results in~\cite[\S7]{BK} 
(see also~\cite[Proposition~6.7]{SS6}).
\begin{prop}\label{prop:simpletype}
The pair~$(\til\J_{\til\P},\til\l_{\til\P})$ is a cover 
of~$(\til\J_0^{\times e},\til\l_0^{\otimes e})$, thus 
an~$[\til\M,\til\pi_0^{\otimes e}]_\tG$-type.
\end{prop}
Moreover, the Hecke algebra~$\Hh(\til\G,\til\l_{\til\P})$ is a generic 
Hecke algebra of type~$A$ with parameter~$q_\F^{n(\til\pi_0)}$, 
where~$n(\til\pi_0)$ denotes the number of unramified characters~$\til\chi$ 
of~$\tG$ such that~$\til\pi_0\til\chi\simeq\til\pi_0$ (the 
\emph{torsion number of~$\til\pi_0$}).
}

%%%%%%%%%%%%%%%%%%%%%%%%%%%%%%%%%%%%%%%%%%%%%%%%%%%%%%%%%%%%%%%%%%%%%%%%
\subsection{}
Now we recall from~\cite{S5} the (corrected) definition of a 
maximal simple type for~$\G$, which we again call a cuspidal type. Recall 
that we have assumed that~$\G$ is not itself a split two-dimensional special 
orthogonal group; thus its centre is compact.

\begin{defi}\label{def:cuspidaltype}
A \emph{cuspidal type} for~$\G$ is a pair~$(\J,\l)$, 
where~$\J=\J(\b,\La)$ for some skew semisimple 
stratum~$[\La,n,0,\b]$ such that
\begin{itemize}
\item[$\bullet$] $\G_\E$ has compact centre and
\item[$\bullet$] $\P^\so(\La_{\oe})$ a maximal parahoric subgroup 
of~$\G_\E$, 
\end{itemize}
and~$\l=\k\otimes\tau$, for~$\k$ a~$\b$-extension and~$\tau$ the inflation 
of an irreducible cuspidal representation 
of~$\J/\J^1\simeq\P(\La_{\oe})/\P_1(\La_{\oe})$.
\end{defi}

\begin{rema}
We thank Laure Blasco and Corinne Blondel for pointing out the problem 
with the definition in~\cite[Definition~6.17]{S5}. There, the two 
conditions on the stratum~$[\La,n,0,\b]$ in 
Definition~\ref{def:cuspidaltype} are replaced  by the (insufficient) 
condition that~$\AA(\La_{\oe})$ be a maximal self-dual~$\oe$-order 
in~$\B$. 

Firstly, this is not enough to guarantee that~$\P^\so(\La_{\oe})$ 
be a maximal parahoric subgroup of~$\G_\E$: for example, if~$\G_\E$ is a 
quasi-split ramified unitary group in~$2$ variables then, for one of the 
two (up to conjugacy) maximal self-dual~$\oe$-orders, the corresponding 
parahoric subgroup is an Iwahori subgroup, so not maximal. 

Secondly, even if~$\P^\so(\La_{\oe})$ is a maximal parahoric subgroup, it 
can still happen that its normalizer in~$\G_\E$ is not compact: this 
happens precisely when~$\G_\E$ has a factor isomorphic to the 
split torus~$\SO(1,1)(\F)$, which can only happen when~$\G$ is an 
even-dimensional orthogonal group and~$\b_i=0$, $\dim_\F\V^i=2$, 
for some~$i\in\I_0$. The condition that~$\G_\E$ have compact centre 
rules out exactly this possibility.

In particular, with the definition of cuspidal type~$(\J,\l)$ given here, 
the proof of~\cite[Proposition~6.18]{S5} is valid, and~$\cInd_\J^\G \l$ 
is an irreducible cuspidal representation of~$\G$.
\end{rema}

%%%%%%%%%%%%%%%%%%%%%%%%%%%%%%%%%%%%%%%%%%%%%%%%%%%%%%%%%%%%%%%%%%%%%%%%
\subsection{}%\label{sub:cuspproof}
In this paragraph we will indicate the minor changes that must be made 
to~\cite[\S7.2]{S5} in order to correct the proof of  the main result 
there~\cite[Theorem~7.14]{S5}: every irreducible cuspidal 
representation of~$\G$ contains a cuspidal type. This paragraph should 
be read alongside that paper and we will make free use of notations 
from there.

Suppose~$\pi$ is an irreducible representation of~$\G$ and suppose 
that there is a pair~$([\La,n,0,\b],\th)$, consisting of a skew semisimple 
stratum~$[\La,n,0,\b]$ and a semisimple character~$\th\in\Cc_-(\La,0,\b)$, 
such that~$\pi$ contains~$\th$. Suppose moreover that, for fixed~$\b$, we 
have chosen a pair for which the parahoric subgroup~$\P^\so(\La_\oe)$ 
is \emph{minimal amongst such pairs}. 
If~$\k$ is a standard $\b$-extension then~$\pi$ also 
contains a representation~$\vartheta=\k\otimes\rho$ of~$\J^\so$, 
for~$\rho$ an irreducible representation 
of~$\J^\so/\J^1\simeq\P^\so(\La_\oe)/\P_1(\La_\oe)$. 
By~\cite[Lemma~7.4]{S5}, the minimality of~$\P^\so(\La_\oe)$ implies 
that the representation~$\rho$ is cuspidal.

\emph{We suppose that either the parahoric subgroup~$\P^\so(\La_\oe)$ is 
not maximal in~$\G_\E$ or~$\G_\E$ does not have compact centre} and will 
find a non-zero Jacquet module. (This assumption takes the place of 
hypothesis~(H) in~\cite[\S7.2]{S5}.) Most of~\cite[\S7.2]{S5} now goes through 
essentially unchanged, with two small changes in the cases called~(i) and~(ii) 
in~\S7.2.2 (page~350). 

In case~(i), the change happens when the element~$p$~\emph{cannot} be
chosen to normalize the representation~$\rho$, interpreted as a
representation of~$\P^\so(\La_{\oe})$ trivial
on~$\P_1(\La_{\oe})$. (Note that~$p\in\P^+(\La_{\oe})$ so it does
normalize the group~$\P^\so(\La_{\oe})$.) In this
case,~$N_\La(\rho)\subseteq\M'$, where~$\M'$ is the Levi subgroup
of~\emph{loc. cit.} (Note, however, that this would not be the case if
we were working in the non-connected group~$\G^+$, rather than~$\G$.)
Thus, by~\cite[Corollary~6.16]{S5}, we
have~$\I_\G(\vartheta_\P)\subseteq\J_\P^\so\M'\J_\P^\so$ and, as in
the proof of~\cite[Proposition~7.10]{S5}, $(\J_\P^\so,\vartheta_\P)$
is a cover
of~$(\J_\P^\so\cap\M',\vartheta_\P|_{\J^\so_\P\cap\M'})$. (See also
Lemma~\ref{lem:iiia} below.)

In case~(ii), the change happens when~$m=1$. In this 
case~$\P^\so(\La_\oe)$ is a maximal parahoric subgroup but~$\G_\E$ does 
not have compact centre; indeed~$\G_{\E_1}\simeq\SO(1,1)(\F)$ and
we have~$\G_\E\subseteq\M'$. As in case~(ii) above, we get 
that~$\I_\G(\vartheta_\P)\subseteq\J_\P^\so\M'\J_\P^\so$ 
and~$(\J_\P^\so,\vartheta_\P)$ is a 
cover of~$(\J_\P^\so\cap\M',\vartheta_\P|_{\J^\so_\P\cap\M'})$. 

In particular, in all cases, the representation~$\pi$ 
containing~$\vartheta$ cannot be cuspidal. Thus we have the following 
analogue of Proposition~\ref{prop:cuspGL}.

\begin{prop}\label{prop:cuspcusp}
Let~$[\La,n,0,\b]$ be a skew semisimple stratum,~$\th\in\Cc_-(\La,0,\b)$ 
a semisimple character, and~$\k$ a standard $\b$-extension. Let~$\tau$ be 
(the inflation to~$\J$ of) an irreducible representation 
of~$\P(\La_\oe)/\P_1(\La_\oe)$. Suppose a cuspidal 
representation~$\pi$ of~$\G$ contains~$\th$ and~$\k\otimes\tau$. 
Then~$\G_\E$ has compact centre, $\P^\so(\La_\oe)$ is a maximal 
parahoric subgroup of~$\G_\E$, and~$\tau$ is cuspidal; that 
is,~$(\J,\k\otimes\tau)$ is a cuspidal type.
\end{prop}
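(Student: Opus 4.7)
The plan is to argue by contradiction, reducing to the analysis of the preceding paragraph. Suppose~$\pi$ is cuspidal, contains both~$\th$ and~$\k\otimes\tau$, but at least one of the three conclusions fails; the aim is to exhibit a non-zero Jacquet module of~$\pi$ along a proper parabolic subgroup of~$\G$, contradicting cuspidality. First restrict to~$\J^\so$: any irreducible constituent of~$(\k\otimes\tau)|_{\J^\so}$ has the form $\vartheta=\k|_{\J^\so}\otimes\rho$, with~$\rho$ an irreducible representation of the connected finite reductive quotient $\J^\so/\J^1\simeq\P^\so(\La_\oe)/\P_1(\La_\oe)$, and~$\pi$ contains~$\vartheta$; observe that~$\tau$ is cuspidal if and only if~$\rho$ is.

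If~$\rho$ is not cuspidal, then~\cite[Lemma~7.4]{S5} provides a strict refinement~$\La_1$ of the lattice sequence, together with a self-dual semisimple character~$\th_1\in\Cc_-(\La_1,0,\b)$ contained in~$\pi$, satisfying~$\P^\so(\La_{1,\oe})\subsetneq\P^\so(\La_\oe)$. Iterating, we arrive at a minimal pair~$([\La_*,n_*,0,\b],\th_*)$ for which the corresponding representation~$\rho_*$ of~$\J^\so_*/\J^1_*$ is cuspidal and~$\P^\so(\La_{*,\oe})\subseteq\P^\so(\La_\oe)$, with equality if and only if~$\tau$ was already cuspidal. At this minimal pair we invoke the dichotomy from the preceding paragraph: in case~(i), when~$\P^\so(\La_{*,\oe})$ is not a maximal parahoric of~$\G_\E$, the intertwining bound of~\cite[Corollary~6.16]{S5} combined with the cover argument of~\cite[Proposition~7.10]{S5} shows that~$(\J^\so_{*,\P},\vartheta_{*,\P})$ is a cover of its restriction to a proper Levi~$\M'\subsetneq\G$, producing a non-zero Jacquet module of~$\pi$; in case~(ii), the same conclusion is obtained when~$\P^\so(\La_{*,\oe})$ is maximal but~$\G_\E$ has non-compact centre.

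These two cases cover every way the three conclusions could fail. If~$\tau$ is not cuspidal then~$\P^\so(\La_{*,\oe})\subsetneq\P^\so(\La_\oe)$ is strictly contained in a parahoric of~$\G_\E$, hence not maximal, and case~(i) applies. If~$\tau$ is cuspidal then~$\La_*=\La$, and a failure of either maximality of~$\P^\so(\La_\oe)$ or of compactness of the centre of~$\G_\E$ falls directly into case~(i) or case~(ii) respectively. Each situation yields a non-zero Jacquet module of~$\pi$, contradicting cuspidality, so all three assertions hold and~$(\J,\k\otimes\tau)$ is a cuspidal type. The main technical obstacle is the refinement step in the non-cuspidal case: promoting non-cuspidality of~$\rho$ on the finite reductive quotient to a genuine strict refinement of~$\La$ still carrying a self-dual semisimple character contained in~$\pi$. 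This is the content of~\cite[Lemma~7.4]{S5} and relies on Bruhat--Tits theory together with the transfer properties of self-dual semisimple characters to match parabolic subgroups of the finite reductive quotient with parahoric subgroups of~$\G_\E$.
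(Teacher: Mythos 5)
Your proposal is correct and follows essentially the same route as the paper's (very terse) proof, which simply refers back to the preceding paragraph for the first two assertions and cites \cite[Lemma~7.4]{S5} for the third; you have fleshed out what the paper leaves implicit, namely the iterated refinement via \cite[Lemma~7.4]{S5} when $\rho$ is not cuspidal and the exhaustive dichotomy at the end. One small organizational remark: the paper's ``cases (i) and (ii)'' refer to the subcases of the argument in \cite[\S7.2.2]{S5} (about the behaviour of the element $p$ and the case $m=1$), not directly to the pair of alternatives ``parahoric not maximal / centre not compact''; your identification of case (i) with the non-maximal situation and case (ii) with the non-compact-centre situation is a reasonable gloss that captures the logic, but the two decompositions are not literally the same. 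It is also worth noting that the Jacquet-module argument from the preceding paragraph requires only cuspidality of $\rho$, not genuine minimality of the pair, which is what makes it legitimate to apply the dichotomy both at the locally-refined pair $\La_*$ and (when $\tau$ is already cuspidal) directly at $\La$; your write-up does use this, though it helps to make it explicit.
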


\begin{proof}
We have just proved the first two assertions, while the third follows
from~\cite[Lemma~7.4]{S5}.
\end{proof}

In particular, since by~\cite[Theorem~5.1]{S4} every irreducible 
cuspidal representation of~$\G$ does contain a semisimple character, 
and hence a representation of~$\J$ of the form~$\k\otimes\tau$,
this also proves~\cite[Theorem~7.14]{S5}.

%%%%%%%%%%%%%%%%%%%%%%%%%%%%%%%%%%%%%%%%%%%%%%%%%%%%%%%%%%%%%%%%%%%%%%%%
%%%%%%%%%%%%%%%%%%%%%%%%%%%%%%%%%%%%%%%%%%%%%%%%%%%%%%%%%%%%%%%%%%%%%%%%
%%%%%%%%%%%%%%%%%%%%%%%%%%%%%%%%%%%%%%%%%%%%%%%%%%%%%%%%%%%%%%%%%%%%%%%%
%%%%%%%%%%%%%%%%%%%%%%%%%%%%%%%%%%%%%%%%%%%%%%%%%%%%%%%%%%%%%%%%%%%%%%%%

\section{Semisimple types}\label{S.covers}

In this section we will prove Theorem~\ref{thm:main} of the
introduction, explaining how to construct a type for each Bernstein
component, via the theory of covers.

%%%%%%%%%%%%%%%%%%%%%%%%%%%%%%%%%%%%%%%%%%%%%%%%%%%%%%%%%%%%%%%%%%%%%%%%
\subsection{} 
We suppose given a Levi subgroup $\M$ of $\G$, which is the stabilizer
of the self-dual decomposition
\begin{equation}\label{eqn:decomp}
\V\ =\ \W_{-m}\oplus \cdots \oplus\W_{m};
\end{equation}
thus, putting $\tG_j=\Aut_\F(\W_j)$ and $\G_0=\Aut_\F(\W_0)\cap\G$, we have
$\M=\G_0\times\prod_{j=1}^m\tG_j$. Let $\tau$ be a cuspidal
irreducible representation of $\M$, which we 
write~$\tau=\tau_0\otimes\bigotimes_{j=1}^m \ttau_j$. 

Also let $\Mm$ denote the stabilizer of the
decomposition~\eqref{eqn:decomp} in $\A$; thus~$\Mm_-$ is the Lie algebra of
$\M$. For $-m\le j\le m$, we denote by $\ee_j$ the idempotent given by
projection onto $\W_j$.

For each $j>0$, let $[\La_j,n_j,0,\b_j]$ be a simple stratum in $\A_j$
and let~$\til\th_j\in\Cc(\La_j,0,\b_j)$ be such that $\ttau_j$ contains
$\til\th_j$; let also $[\La_0,n_0,0,\b_0]$ be a skew semisimple stratum in $\A_0$
and let~$\th_0\in\Cc_-(\La_0,0,\b_0)$ be such that $\tau_0$ contains
$\th_0$.

\begin{prop}[{\cite[Proposition~8.4]{Dat}}]\label{prop:dat} 
There are a self-dual semisimple stratum $[\La,n,0,\b]$ with
$\b\in\Mm$, and a self-dual semisimple character $\th$ of
$\H^1=\H^1(\b,\La)$ such that: 
\begin{enumerate}
\item The decomposition~\eqref{eqn:decomp} is properly subordinate to
$[\La,n,0,\b]$;
\item $\H^1(\b,\La)\cap\M=\H^1(\b_0,\La_0)\times
\prod_{j=1}^m\til\H^1(\b_j,\La_j)$; and
\item $\th|_{\H^1(\b,\La)\cap\M}=\th_0\otimes\bigotimes_{j=1}^m\til\th_j$. 
\end{enumerate}
\end{prop}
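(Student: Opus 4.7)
The plan is to build $[\La,n,0,\b]$ and $\th$ by gluing the data coming from each block of $\M$, and then correcting them so that the global stratum is semisimple and self-dual. This is essentially what Dat does in~\cite{Dat}, and I would follow the same line.

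First I would construct the lattice sequence. By replacing each $\La_j$ by an element of its affine class, I can arrange that all $\La_j$ have a common $\oe$-period (here~$\E_j=\F[\b_j]$ for~$j\ne 0$ and we also split~$\La_0$ according to the splitting associated to $[\La_0,n_0,0,\b_0]$) and that, setting $\La=\bigoplus_{j=-m}^m\La_j$ with~$\La_{-j}$ the dual of~$\La_j$, the decomposition~\eqref{eqn:decomp} is properly subordinate in the sense of~\cite[Definition~5.1]{S5}. The rescaling only alters the objects in question by a change in indices, by the affine-class remarks in~\S\ref{S.notation}, so none of the given strata/characters are lost. By construction $\La$ is self-dual.

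Next, the candidate $\b'=\sum_{j\ne 0}(\ee_j\b_j+\ee_{-j}\ov{\b_j}\ee_{-j})+\ee_0\b_0$ lies in $\Mm_-$, but the stratum $[\La,n,0,\b']$ (with $n=\max n_j$) may fail to be semisimple because the minimal polynomials of the various $\b_j$ may interact badly, or because cancellations may occur at intermediate levels. I would handle this exactly as in the skew case, by an inductive correction: using the self-dual analogue of~\cite[Lemma~3.5]{S4} recalled above as Lemma~\textit{(cf.~\cite[Lemma~3.5]{S4})}, I can repeatedly add correction terms supported on the blocks $\A^{ii}$ to replace a non-semisimple stratum $[\La,n,m,\b']$ by an equivalent self-dual semisimple stratum at level $m-1$, all corrections being chosen in~$\Mm_-$ since the idempotents~$\ee_j$ commute with the relevant tame corestrictions. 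Pushing this down to $m=0$ produces a self-dual semisimple stratum $[\La,n,0,\b]$ with $\b\in\Mm_-$ and with associated splitting a refinement of the one coming from the blocks of~$\M$; in particular (i) holds and (ii) holds because $\HH(\b,\La)$ is built out of its diagonal blocks $\HH(\b,\La)\cap\A^{jj}$, and on each $\W_j$ with $j\ne 0$ the relevant block coincides with $\til\HH(\b_j,\La_j)$ (and similarly for $j=0$) because~$\ee_j\b\ee_j$ and~$\b_j$ define the same simple (respectively skew semisimple) character data after transfer.

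Finally I would produce $\th$. Because the stratum factorizes through the blocks on the diagonal, the set $\Cc_-(\La,0,\b)$ has a well-defined restriction map to $\Cc(\La_0,0,\b_0)_-\times\prod_j\Cc(\La_j,0,\ee_j\b\ee_j)$, which is compatible with transfer and with the Glauberman correspondence by the results recalled in \S\ref{S.characters}. Using the transfer bijections $\tau_{\La_j,-,\ee_j\b\ee_j}$ to move the given $\til\th_j$ to the blocks of $\b$ that they sit inside, and using that a self-dual semisimple character is determined and can be prescribed on each diagonal block independently (cf.~the inductive construction of~$\Cc(\La,m,\b)$), I obtain a $\th\in\Cc_-(\La,0,\b)$ whose restriction to $\H^1\cap\M$ is exactly $\th_0\otimes\bigotimes_j\til\th_j$, giving (iii).

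The main obstacle is the semisimplification step: ensuring that the naive diagonal $\b'$ can be corrected into a \emph{semisimple} and \emph{self-dual} $\b$ without leaving the Levi subalgebra $\Mm$, and in a way that is compatible with the block data. This is what forces the use of the self-dual analogues of the approximation lemmas from~\cite{S4}, and is the point at which $p$ odd (and the existence of tame corestrictions commuting with the involution) is crucially used.
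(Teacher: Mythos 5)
The paper does not actually prove this proposition: the stated proof is a one-line citation, ``(ii) and (iii) are given by~\cite[Proposition~8.4]{Dat}, and (i) by the comments following its statement.'' So there is no argument in the paper to compare against; what you have written is a sketch of Dat's construction rather than the paper's own reasoning, and the appropriate judgement is whether your reconstruction is a faithful one.

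Broadly it is, but there are a few places where the details are off or glossed.  First, a sign: for $\b'$ to lie in $\A_-$ the $(-j,-j)$--block must be $-\ov{\b_j}$, not $\ov{\b_j}$ (cf.\ the convention $\ov{\b_i}=-\b_{\s(i)}$ in~\S\ref{S.characters}).  Second, a factor of $2$: the remark immediately following Proposition~\ref{prop:dat} in the paper normalizes $\b_j=2\ee_j\b\ee_j$ for $j>0$; with your naive diagonal candidate one would instead get $\ee_j\b'\ee_j=\b_j$, so the matching with the given simple characters $\til\th_j$ is really a matching after a $\s$-fixed-point/Glauberman renormalization of the $\GL$-blocks, and this needs to be said.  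Third, and most substantively, the step ``a self-dual semisimple character can be prescribed on each diagonal block independently'' conflates two different block decompositions: the splitting $\bigoplus_i\V^i$ intrinsic to the semisimple stratum $[\La,n,0,\b]$, and the Levi decomposition $\bigoplus_j\W_j$ coming from $\M$. These generally do not coincide (indeed several $\W_j$ sit inside a single $\V^i$), and it is precisely the proper subordination of~\eqref{eqn:decomp} to $[\La,n,0,\b]$ --- together with the Iwahori decomposition of $\H^1$, $\J^1$ that follows --- that allows one to control $\th|_{\H^1\cap\M}$. Your sketch gestures at this by invoking transfer, but the transfer maps $\tau_{\La_j,\cdot,\cdot}$ operate within a fixed block $\V^i$, not across the $\W_j$, so a little more is needed to get (iii) on the nose. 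None of these is a fatal flaw --- they are the kind of bookkeeping Dat handles --- but they should be made explicit before treating the sketch as a proof.
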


\begin{proof} (ii) and (iii) are given by~\cite[Proposition~8.4]{Dat}, and (i) by the comments following its statement. 
\end{proof}

For $j\ne 0$, we note that (iii) implies that $\til\th_j$ is a simple
character for the simple stratum $[\La_j,n_j,0,2\ee_j\b\ee_j]$; likewise,
$\th_0$ is a skew semisimple character for $[\La_0,n_0,0,\ee_0\b\ee_0]$.
Thus we may, and do, assume that $\b_j=2\ee_j\b\ee_j$, for $j>0$, and
$\b_0=\ee_0\b\ee_0$. Similarly, we may and do assume that the lattice
sequence $\La_j$ is equal to $\La\cap\W_j$.

\begin{rema} The property in Proposition~\ref{prop:dat} 
that~$[\La,n,0,\b]$ is semisimple is strictly stronger than the 
property that each stratum $[\La_j,n_j,0,\ee_j\b\ee_j]$ is (semi)simple. 
In general, the direct sum of (semi)simple strata need not be semisimple.
\end{rema}

Let $\V=\bigoplus_{i=-l}^l \V^i$ be the self-dual decomposition associated
to the stratum $[\La,n,0,\b]$ and let $\L=\L_\b$ be the $\G$-stabilizer of this
decomposition. Since $\b\in\Mm$, this is a coarsening of the
decomposition~\eqref{eqn:decomp}: that is, each $\V^i$ is a sum of
certain $\W_j$, with $\W_0\subseteq\V^0$, so that $\L\supseteq\M$. 

We will abbreviate $\H^1=\H^1(\b,\La)$, and similarly
$\J^1,\J^\so,\J$. By Proposition~\ref{prop:dat}(i), all these groups
have Iwahori decompositions with respect to $(\M,\P)$, for any
parabolic subgroup $\P=\M\U$ with Levi component $\M$; thus we may
form the groups $\H^1_\P,\J^1_\P,\J^\so_\P,\J_\P$ as 
in~\S\ref{S.characters}. 

Write $\G_\E$ for the centralizer of $\b$ in $\G$, so $\G_\E\subseteq\L$. 
We note that, by Propositions~\ref{prop:cuspGL}~and~\ref{prop:cuspcusp}, 
the group
$\J^\so\cap\G_\E\cap\M$ is a maximal parahoric subgroup of $\G_\E\cap\M$. In
particular, the decomposition~\eqref{eqn:decomp} is \emph{exactly
subordinate} to $[\La,n,0,\b]$, in the language
of~\cite[Definition~6.5]{S5}. (In fact, the definition of exactly subordinate in~\emph{loc.\ cit.} should have required that~$\P^\so(\La_\oe)\cap\M$ be a maximal parahoric subgroup of $\G_\E\cap\M$.)

Let $\eta$ be the unique irreducible representation of $\J^1$
containing $\th$, and choose a standard $\b$-extension $\k$ of $\th$.
Denote by $\k_\P$ the natural representation of $\J_\P$ on the
$(\J\cap\U)$-fixed vectors in $\k$, by $\eta_\P$ its restriction to
$\J_\P^1$, and by $\th_\P$ the character of $\H^1_\P$ which extends
$\th$ and is trivial on $\J^1\cap\U$.

Since the decomposition~\eqref{eqn:decomp} is exactly subordinate to 
$[\La,n,0,\b]$, by~\cite[Proposition~6.3]{S5} the 
restriction~$\k_\M=\k_\P|_{\J\cap\M}$
is a standard $\b$-extension of $\eta_\M=\eta_\P|_{\J^1\cap\M}$, which is
itself the unique irreducible representation of $\J^1\cap\M$
containing $\th_\M=\th|_{\H^1\cap\M}$; this means that
$\k_\M=\k_0\otimes\bigotimes_{j=1}^m\til\k_j$, where $\til\k_j$ is a
$\b_j$-extension containing $\tth_j$ and $\k_0$ is a standard
$\b_0$-extension containing $\th_0$. 

Since $\tau$ contains $\th_\M$, it also contains $\eta_\M$, and hence
some representation of $\J^\so\cap\M$ of the form
$\l^\so_\M=\k_\M\otimes\rho^\so_\M$, with $\rho^\so_\M$ the inflation to
$\J^\so\cap\M$ of an irreducible representation of the connected
reductive group $\P^\so(\La_{\oe})/\P_1(\La_{\oe})$. Moreover, by
Propositions~\ref{prop:cuspGL}~and~\ref{prop:cuspcusp}, the 
representation $\rho^\so_\M$ is necessarily cuspidal. We write
$\rho^\so_\M=\rho^\so_0\otimes\bigotimes_{j=1}^m\til\rho_j$, where
$\til\rho_j$ is a cuspidal representation of
$\til\P(\La_{j,\fo_{\E_j}})/\til\P_1(\La_{j,\fo_{\E_j}})$, for $j\ge 1$, and
$\rho^\so_0$ is a cuspidal representation of
$\P^\so(\La_{0,\fo_{\E_0}})/\P_1(\La_{0,\fo_{\E_0}})$.

Now we have an isomorphism
$\J^\so_\P/\J^1_\P\simeq(\J^\so\cap\M)/(\J^1\cap\M)$ so we can also
regard $\rho^\so_\M$ as a representation of $\J^\so_\P$ by inflation. Thus
we can form the representation $\l^\so_\P=\k_\P\otimes\rho^\so_\M$ of
$\J_\P^\so$. The main result is then:

\begin{theo}\label{thm:cover}
The pair $(\J_\P^\so,\l^\so_\P)$ is a cover of $(\J_\P^\so\cap\M,\l^\so_\M)$.
\end{theo}

\begin{rema}
Certainly, the pair $(\J_\P^\so,\l^\so_\P)$ is a decomposed pair above $(\J_\P^\so\cap\M,\l^\so_\M)$, in the sense of~\cite[Definition~6.1]{BK1}. Moreover, putting~$\l^\so=\Ind_{\J_\P^\so}^{\J^\so}\l_\P^\so=\k\otimes\l_\M^\so$ (with~$\l_\M^\so$ regarded as a representation of~$\J^\so$ trivial on~$\J^1$), we have a support-preserving isomorphism
\[
\Hh(\G,\l_\P^\so)\simeq\Hh(\G,\l^\so),
\]
as in~\cite[Lemma~6.1]{S5}. In particular, the condition on the Hecke 
algebra which needs to be checked to prove that~$(\J_\P^\so,\l^\so_\P)$ 
is a cover is independent of the choice of parabolic subgroup~$\P$ with 
Levi component~$\M$. Thus we can, and will, change our choice of~$\P$ 
where necessary.
\end{rema}

The proof of Theorem~\ref{thm:cover} will occupy the next few subsections. 
Let us see how this implies Theorem~\ref{thm:main} of the introduction. 

\begin{proof}[Proof of Theorem~\ref{thm:main}]
Since~$\tau$ contains~$\l^\so_\M$, it contains some irreducible
representation~$\l_\M$ of~$\J\cap\M=\J_\P\cap\M$ which contains~$\l^\so_\M$; 
more precisely, we can write~$\l_\M=\k_\M\otimes\rho_\M$, with~$\rho_\M$ 
the inflation of an irreducible representation 
of~$\P(\La_{\oe})/\P_1(\La_{\oe})$ which contains~$\rho_\M^\so$. 
Thus~$(\J_\P\cap\M,\l_\M)$ is a cuspidal type in~$\M$, which is 
an~$[\M,\tau]_\M$-type.

We put $\l_\P=\k_\P\otimes\rho_\M$, so that
$\l_\P|_{\J_\P\cap\M}=\l_\M$. Then certainly $(\J_\P,\l_\P)$ is a 
decomposed pair above $(\J_\P\cap\M,\l_\M)$, while $(\J_\P^\so,\l^\so_\P)$ 
is a cover of $(\J_\P^\so\cap\M,\l^\so_\M)$, by Theorem~\ref{thm:cover}. 
Thus, by~\cite[Lemma~3.9]{M3}, $(\J_\P,\l_\P)$ is also a cover of 
$(\J_\P\cap\M,\l_\M)$. Since $(\J_\P\cap\M,\l_\M)$ is an 
$[\M,\tau]_\M$-type, 
we conclude from~\cite[Theorem~8.3]{BK1} that $(\J_\P,\l_\P)$ is an 
$[\M,\tau]_\G$-type. Since the pair $(\M,\tau)$ was arbitrary, we 
have a type for every Bernstein component.
\end{proof}

%%%%%%%%%%%%%%%%%%%%%%%%%%%%%%%%%%%%%%%%%%%%%%%%%%%%%%%%%%%%%%%%%%%%%%%%
\subsection{} 
The proof of Theorem~\ref{thm:cover} proceeds by 
transitivity of covers~\cite[Proposition~8.5]{BK1}. Putting 
$\l_\L^\so=\l^\so_\P|_{\J_\P^\so\cap\L}$, the first step is to show:

\begin{lemm}\label{lem:LneG}
The pair $(\J_\P^\so,\l^\so_\P)$ is a cover of $(\J_\P^\so\cap\L,\l^\so_\L)$. 
Moreover, there is a support-preserving Hecke algebra isomorphism
$$
\Hh(\G,\l^\so_\P) \simeq \Hh(\L,\l^\so_\L).
$$
\end{lemm}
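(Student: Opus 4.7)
The plan is to obtain both the cover property and the Hecke algebra isomorphism from one key input: that the intertwining of~$\l_\P^\so$ is controlled by~$\L$, which in turn follows from~$\G_\E\subseteq\L$.

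First, I would verify that $(\J_\P^\so,\l_\P^\so)$ is a decomposed pair above $(\J_\P^\so\cap\L,\l_\L^\so)$ with respect to a suitable parabolic~$\Q$ of~$\G$ with Levi component~$\L$. Since~$\M\subseteq\L$, I can choose~$\Q=\L\U_\Q$ so that~$\U_\Q\subseteq\U$; regrouping the Iwahori decomposition of~$\J_\P^\so$ with respect to~$(\M,\P)$ then yields an Iwahori decomposition with respect to~$(\L,\Q)$. Because~$\l_\P^\so$ is trivial on~$\J^1\cap\U$, it is trivial on~$\J^1\cap\U_\Q$ (and likewise on the opposite), so the decomposed pair property follows.

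Second, I would compute the intertwining. Any~$g\in\G$ intertwining~$\l_\P^\so$ also intertwines~$\eta_\P$, so by Lemma~\ref{lem:thetaP}(iii), $g\in \J_\P^1\G_\E^+\J_\P^1$. Since~$\J_\P^1\subseteq\G$, we can absorb the compact factors to deduce that the middle term lies in~$\G_\E^+\cap\G=\G_\E$. Moreover, each component~$\V^i$ of the self-dual decomposition attached to~$\b$ is stable under~$\G_\E$, so~$\G_\E\subseteq\L$. Consequently,~$\I_\G(\l_\P^\so)\subseteq \J_\P^1\G_\E\J_\P^1\subseteq \J_\P^\so\L\J_\P^\so$.

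Third, I would deduce the support-preserving Hecke algebra isomorphism. The combination of the decomposed pair structure with the intertwining containment is the standard input (cf.~\cite[(7.9)--(7.12)]{BK}) guaranteeing that restriction of functions defines a support-preserving algebra isomorphism $\Hh(\G,\l_\P^\so)\to\Hh(\L,\l_\L^\so)$: a function in $\Hh(\G,\l_\P^\so)$ is supported in $\J_\P^\so\L\J_\P^\so$, and its values on $\L$ determine it by the Iwahori decomposition. The cover condition then follows easily: for $z$ strongly positive in the centre of $\L$ with respect to~$\Q$, the $\L$-side Hecke algebra~$\Hh(\L,\l_\L^\so)$ contains an invertible element supported on $(\J_\P^\so\cap\L)z(\J_\P^\so\cap\L)$ (immediate, since $z$ is central in~$\L$ and acts by a scalar on the irreducible representation~$\l_\L^\so$), and transferring through the isomorphism gives the required invertible element of~$\Hh(\G,\l_\P^\so)$ supported on $\J_\P^\so z\J_\P^\so$.

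The main obstacle is the verification of the support-preserving isomorphism in the third step: one must confirm that convolution in~$\Hh(\G,\l_\P^\so)$ of two functions supported in $\J_\P^\so\L\J_\P^\so$ receives no contribution from elements outside~$\L$. This is a coset-counting argument in the unipotent radicals, using the Iwahori decomposition together with the triviality of~$\l_\P^\so$ on $\J^1\cap\U_\Q^\pm$ — routine in the Bushnell--Kutzko framework but delicate enough to warrant care.
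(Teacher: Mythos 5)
Your proposal is correct and follows essentially the same route as the paper: bound the intertwining of $\l_\P^\so$ via Lemma~\ref{lem:thetaP} and use $\G_\E\subseteq\L$ to land inside $\J_\P^\so\L\J_\P^\so$, then invoke the decomposed-pair structure. The paper dispatches the rest by citing \cite[Theorem~7.2]{BK1}, which packages precisely your third step (the support-preserving isomorphism from the Iwahori decomposition and the intertwining bound, from which the cover condition is immediate), and gets the decomposed pair over $\L$ for free from the already-established decomposed pair over $\M\subseteq\L$; your direct verifications of these points are valid but reprove known general facts.
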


\begin{proof} 
Since $(\J_\P^\so,\l^\so_\P)$ is a decomposed pair above 
$(\J_\P^\so\cap\M,\l^\so_\M)$ and $\M\subseteq\L\subseteq\G$, it is 
certainly also a decomposed pair above $(\J_\P^\so\cap\L,\l^\so_\L)$.

Now the support of the Hecke algebra~$\Hh(\G,\l^\so_\P)$ is the 
intertwining of~$\l^\so_\L$, which is contained in the intertwining 
of~$\th_\P$. By Lemma~\ref{lem:thetaP}(i), this intertwining 
is~$\J_\P\G_\E\J_\P \subseteq \J^\so_\P\L\J^\so_\P$. The result now 
follows from~\cite[Theorem~7.2]{BK1}.
\end{proof}

%%%%%%%%%%%%%%%%%%%%%%%%%%%%%%%%%%%%%%%%%%%%%%%%%%%%%%%%%%%%%%%%%%%%%%%%
\subsection{} Lemma~\ref{lem:LneG} reduces us to proving 
that~$(\J_\P^\so\cap\L,\l_\L^\so)$ is a cover 
of~$(\J_\P^\so\cap\M,\l_\M^\so)$.
By Proposition~\ref{prop:kappaL}, we 
have~$\l_\L^\so=\k'_{\P\cap\L}\otimes\rho_\M^\so$, 
where~$\k'=\k_\Q|_{\J^\so\cap\L}$ is a standard~$\b$-extension 
of~$\th|_{\H^1\cap\L}$, and we think of~$\rho_\M^\so$ as a representation 
of~$\left(\J_\P^\so\cap\L\right)/\left(\J^1_\P\cap\L\right)\simeq 
\left(\J^\so\cap\M\right)/\left(\J^1\cap\M\right)$. The first step is 
to describe~$\k'_{\P\cap\L}$, whence~$\l^\so_\L$, more carefully.

Recall that~$V=\bigoplus_{i=-l}^l\V^i$ is the self-dual decomposition 
associated to the semisimple stratum~$[\La,n,0,\b]$, and that, for 
each~$i$, we have~$\V^i=\bigoplus_{j\in J_i}\W_j$, for some subset~$J_i$ 
of~$\{-m,\ldots,m\}$. Writing~$\ee^i$ for the projection onto~$\V^i$ as 
usual, and~$\La^i=\La\cap\V^i$, the stratum~$[\La^i,n_i,0,\ee^i\b\ee^i]$ 
is 
\begin{itemize}
\item[$\bullet$] skew semisimple, for~$i=0$,
\item[$\bullet$] simple, for~$i\ne 0$,
\end{itemize}
where~$n_i=-\v_\La(\ee^i\b\ee^i)=-\v_{\La^i}(\ee^i\b\ee^i)$.

We write~$\L=\G^0\times\prod_{i=1}^l\tG^i$, where~$\tG^i=\Aut_\F(\V^i)$. 
We have~$\th|_{\H^1\cap\L}=\th'_0\otimes\bigotimes_{i=1}^l\tth'_i$, 
where~$\th'_0$ is a skew semisimple character 
in~$\Cc_-(\La^0,0,\ee^0\b\ee^0)$, and~$\tth'_i$ is a simple character 
in~$\Cc(\La^i,0,2\ee^i\b\ee^i)$. Then the 
standard~$\b$-extension~$\k'$ takes the 
form~$\k'=\k'_0\otimes\bigotimes_{i=1}^l\til\k'_i$, for~$\k'_0$ a 
standard~$\ee^0\b\ee^0$-extension of~$\th'_0$, and~$\til\k'_i$ 
a~$2\ee^i\b\ee^i$-extension of~$\tth'_i$.

Since~$\M\subseteq\L$, we have~$\P\cap\L=\P^0\times\prod_{i=1}^l\til\P^i$, 
with~$\P^i$ a parabolic subgroup of~$\G^i$. We 
put~$\rho'_0=\rho^\so_0\otimes\bigotimes_{j\in J_0,\,j>0}\til\rho_j$, 
and~$\til\rho'_i=\bigotimes_{j\in J_i}\til\rho_j$, for~$i>0$. Then we 
put~$\l^\so_0=\k'_0\otimes\rho'_0$, 
and~$\til\l'_i=\til\k'_i\otimes\til\rho'_i$, for~$i>0$.

Now~$\J_\P^\so\cap\L=\J^\so_{\P^0}\times\prod_{i=1}^l\til\J_{\til\P^i}$ 
(with the obvious notation) and~$\k'_{\P\cap\L}\simeq
\k'_{0,\P^0}\otimes\bigotimes_{i=1}^l\til\k'_{i,\til\P^i}$. In particular, 
we also get~$\l^\so_\L\simeq\l^\so_{0,\P^0}\otimes
\bigotimes_{i=1}^l\til\l'_{i,\til\P^i}$.

Finally, we write~$\M^0=\M\cap\G^0$ and~$\til\M^i=\M\cap\tG^i$, for~$i>0$, 
so that~$\M=\M^0\times\prod_{i=1}^l\til\M^i$. Then, in order to prove 
that~$(\J_\P^\so\cap\L,\l_\L^\so)$ is a cover 
of~$(\J^\so\cap\M,\l_\M^\so)$ we need to show: 
\begin{itemize}
\item[$\bullet$] $(\J^\so_{\P^0},\l^\so_{0,\P^0})$ is a cover 
of~$(\J^\so_{\P^0}\cap\M^0,\l^\so_{0,\P^0}|_{\J_{\P^0}\cap\M^0})$; and
\item[$\bullet$] $(\til\J_{\til\P^i},\til\l'_{i,\til\P^i})$ is a cover 
of~$(\til\J_{\til\P^i}\cap\til\M^i, \til\l'_{i,\til\P^i}
|_{\til\J_{\til\P^i}\cap\til\M^i})$, for~$i>0$.
\end{itemize}
The latter is given by~\cite[Proposition~8.1]{SS6}: since the 
underlying stratum is simple, it is a~\emph{homogeneous semisimple type}, 
in the sense of~\cite{BKsemi,SS6}. On the other hand, the former is the case 
of a~\emph{skew} semisimple stratum; that is, we have reduced the proof of
Theorem~\ref{thm:cover} to the case~$\L=\G$, and we are in the situation 
of~\cite[\S7]{S5}. Indeed it is possible to extract the proof that we get 
a cover here from the results in \emph{loc.\ cit.}, which we will do 
in the following subsections.

%%%%%%%%%%%%%%%%%%%%%%%%%%%%%%%%%%%%%%%%%%%%%%%%%%%%%%%%%%%%%%%%%%%%%%%%
\subsection{}\label{SS.LeqGbutsplit}
We are now in the situation of Theorem~\ref{thm:cover} in the special 
case~$\L=\G$, so that~$[\La,n,0,\b]$ is a skew semisimple stratum. 
In~\cite[\S6.3]{S5}, an involution~$\s_j$ is defined on~$\tG_j$, 
for~$j>0$, coming from the composition of the involution~$\s$ on~$\G$ 
and a Weyl group element which exchanges~$\W_j$ with~$\W_{-j}$. 
By~\cite[Lemma~6.9, Corollary~6.10]{S5}, the 
group~$\til\J(\b_j,\La_j)$ is stable under 
this involution, and~$\til\k_j\simeq\til\k_j\circ\s_j$. 

Recall that we 
have~$\rho^\so_\M=\rho^\so_0\otimes\bigotimes_{j=1}^m\til\rho_j$. 
For~$j>0$ we put~$\til\rho_{-j}=\til\rho_j\circ\s_j$.

We suppose first that there is an index~$k>0$ such 
that~$\til\rho_k\not\simeq\til\rho_{-k}$. We put 
\[
J_1=\{-m\le j\le m \mid \til\rho_j\simeq\til\rho_k\},\quad 
J_0=\{j\mid\pm j\not\in J_1\},\quad J_{-1}=\{-j\mid j\in J_1\},
\]
and set~$\Y^i=\bigoplus_{j\in J_i}\W_j$, for~$i=-1,0,1$. 
Since~$\til\rho_k\not\simeq\til\rho_{-k}$ we 
have~$\V=\Y^{-1}\oplus\Y^0\oplus\Y^1$. Let~$\M'$ be the Levi subgroup 
of~$\G$ stabilizing this decomposition and let~$\P'=\M'\U'$ be a parabolic 
subgroup containing~$\P$. (Note that one may need to change the choice of 
the parabolic subgroup~$\P$ in order to achieve this.) We 
have~$\M'=\G^0\times\tG^1$, where~$\G^0=\Aut_\F\Y^0\cap\G$ 
and~$\tG^1=\Aut_\F\Y^1$, and write~$\M=\M^0\times\til\M^1$ also.

By~\cite[Proposition~7.10]{S5} and its proof we have:

\begin{lemm}\label{lem:LeqGbutsplit}
The pair $(\J_\P^\so,\l_\P^\so)$ is a 
cover of~$(\J_\P^\so\cap\M',\l_\P^\so|_{\J_\P^\so\cap\M'})$ and 
there is a support-preserving isomorphism of Hecke 
algebras~$\Hh(\G,\l_\P^\so)\simeq\Hh(\M',\l_\P^\so|_{\J_\P^\so\cap\M'})$.
\end{lemm}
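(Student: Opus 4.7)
The plan is to prove both statements at once by applying~\cite[Theorem~7.2]{BK1}: it is enough to check that $(\J_\P^\so,\l_\P^\so)$ is decomposed over $(\J_\P^\so\cap\M', \l_\P^\so|_{\J_\P^\so\cap\M'})$ and that the $\G$-intertwining of $\l_\P^\so$ is contained in $\J_\P^\so\M'\J_\P^\so$. The decomposition property is essentially formal: having chosen the parabolic $\P$ so that $\P\subseteq\P'$, the Iwahori decomposition of $\J_\P^\so$ with respect to $(\M,\P)$ forces the Iwahori decomposition with respect to $(\M',\P')$, and the restriction statement for $\l_\P^\so$ is immediate from the fact that $\l_\P^\so$ is already trivial on $\J^1\cap\U$.

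The substance lies in the intertwining computation. Since $\l_\P^\so$ contains $\th_\P$, Lemma~\ref{lem:thetaP}(i) gives $\I_\G(\l_\P^\so)\subseteq\I_\G(\th_\P)=\J_\P^1\G_\E\J_\P^1$, so it suffices to decide which $g\in\G_\E$ intertwine $\l_\P^\so$. By Lemma~\ref{lem:thetaP}(iii), every such $g$ already intertwines $\eta_\P$ with one-dimensional intertwining space, so the remaining condition is that $g$ intertwine the cuspidal factor $\rho_\M^\so$, viewed as a representation of $\J_\P^\so/\J_\P^1\simeq(\J^\so\cap\M)/(\J^1\cap\M)$.

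Now the image of $g$ in the finite reductive quotient lies in a Weyl-type group which permutes the tensor factors $\til\rho_j$ indexed by $\{-m,\ldots,m\}$, with the twist by $\s_{j'}$ appearing precisely when $j$ is sent to $-j'$ with $j'>0$ (see the discussion in~\S\ref{SS.LeqGbutsplit} and~\cite[\S6.3]{S5}). Intertwining $\rho_\M^\so=\rho_0^\so\otimes\bigotimes_{j=1}^m\til\rho_j$ therefore requires the permutation to match $\til\rho_j$ with $\til\rho_{j'}$, or in a crossing case with $\til\rho_{j'}\circ\s_{j'}=\til\rho_{-j'}$. By the definitions of $J_1,J_0,J_{-1}$ and the standing hypothesis $\til\rho_k\not\simeq\til\rho_{-k}$, the permutation must preserve the partition $\{J_{-1},J_0,J_1\}$, so $g$ stabilizes the decomposition $\Y^{-1}\oplus\Y^0\oplus\Y^1$; in other words $g\in\M'$. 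This yields $\I_\G(\l_\P^\so)\subseteq\J_\P^\so\M'\J_\P^\so$, and~\cite[Theorem~7.2]{BK1} delivers both the cover property and the support-preserving Hecke algebra isomorphism.

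The main obstacle is the bookkeeping of the involutions $\s_{j'}$ in the last step, ensuring that the inequivalence $\til\rho_k\not\simeq\til\rho_{-k}=\til\rho_k\circ\s_k$ genuinely rules out any crossing between $\Y^1$ and $\Y^{-1}$; this is precisely the machinery assembled in~\cite[\S\S6.3,~7]{S5} and used in the proof of~\cite[Proposition~7.10]{S5}, which we can import essentially verbatim.
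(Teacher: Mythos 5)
Your proof follows the same route as the paper, which disposes of the lemma in one line by citing~\cite[Proposition~7.10]{S5} and its proof; you have essentially unpacked that citation. The overall shape — reduce the intertwining of $\l_\P^\so$ to elements of $\G_\E$ via Lemma~\ref{lem:thetaP}, use the hypothesis $\til\rho_k\not\simeq\til\rho_{-k}$ to force those elements into $\M'$, then invoke~\cite[Theorem~7.2]{BK1} — is exactly right.

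One step is stated more loosely than it should be: from ``$g$ intertwines $\eta_\P$ with one-dimensional intertwining space'' you cannot immediately conclude that ``the remaining condition is that $g$ intertwine $\rho_\M^\so$.'' The unique intertwining operator for $\eta_\P$ need not intertwine the extension $\k_\P$; it does so only up to a character twist of the finite quotient, and disentangling this is precisely the content of~\cite[Corollaries~6.13 and~6.16]{S5}. The correct intermediate statement is $\I_\G(\l_\P^\so)\subseteq\J_\P^\so N_\La(\rho_\M^\so)\J_\P^\so$ (suitable normalizer, possibly after a character twist). Fortunately this does not alter your conclusion: the twist lives on the $\G^0$-block of the reductive quotient and does not affect the $\GL$-type factors $\til\rho_j$ for $j\ne 0$, so the partition $\{J_{-1},J_0,J_1\}$ and hence the inclusion in $\M'$ is genuinely determined by $\til\rho_k\not\simeq\til\rho_{-k}$ alone. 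Since you explicitly defer to the machinery of~\cite[\S\S6.3,~7]{S5}, I would count this as a presentational shortcut rather than a gap, but the intermediate appeal to~\cite[Corollary~6.16]{S5} should be made explicit.
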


Now we have~$\J_\P^\so\cap\M'=(\J_\P^\so\cap\G^0)\times(\J_\P^\so\cap\tG^1)$ 
and, as in the previous section, we need to prove:
\begin{itemize}
\item[$\bullet$] $(\J^\so_{\P}\cap\G^0,\l^\so_{\P}|_{\J^\so_{\P}\cap\G^0})$ 
is a cover of~$(\J^\so_{\P}\cap\M^0,\l^\so_{\P}|_{\J^\so_{\P}\cap\M^0})$; and
\item[$\bullet$] $(\J^\so_{\P}\cap\tG^1,\l^\so_{\P}|_{\J^\so_{\P}\cap\tG^1})$ 
is a cover of~$(\J^\so_{\P}\cap\til\M^1,\l^\so_{\P}|_{\J^\so_{\P}\cap\til\M^1})$.
\end{itemize}
Again as in the previous section, the latter is a cover 
by~\cite[Proposition~6.7]{SS6}; it is a simple type. The former is again 
the case of a skew semisimple stratum, but with fewer indices~$j$ such 
that~$\til\rho_j\not\simeq\til\rho_{-j}$. In particular, by repeating 
the process in this paragraph, we can reduce to the case 
where~$\til\rho_j\simeq\til\rho_{-j}$, for all~$j$.

%%%%%%%%%%%%%%%%%%%%%%%%%%%%%%%%%%%%%%%%%%%%%%%%%%%%%%%%%%%%%%%%%%%%%%%%
\subsection{}\label{SS.SO11}
We suppose now that~$\G_\E$ does not have compact centre. This implies 
that~$\G$ is a special orthogonal group, that~$\b_k=0$ for a unique~$k>0$, 
and that~$\dim_\F \W_k=1$. In this case set~$\Y^1=\W_k$, $\Y^{-1}=\W_{-k}$, 
and~$\Y^0=\bigoplus_{j\ne\pm k}\W_j$, let~$\M'$ be the Levi subgroup 
stabilizing the decomposition~$\V=\Y^{-1}\oplus\Y^0\oplus\Y^{1}$, and 
let~$\P'=\M'\U'$ be a parabolic subgroup containing~$\P$. (Again, this may require the choice of~$\P$ to be changed.) We have~$\G_\E\subseteq\M'$ and, 
by~\cite[Corollary~6.16]{S5}, 
$\I_\G(\l^\so_\P)\subseteq\J^\so_\P\M'\J^\so_\P$. In particular we get:

\begin{lemm}\label{lem:LeqGSO11}
The pair $(\J_\P^\so,\l_\P^\so)$ is a 
cover of~$(\J_\P^\so\cap\M',\l_\P^\so|_{\J_\P^\so\cap\M'})$ and 
there is a support-preserving isomorphism of Hecke 
algebras~$\Hh(\G,\l_\P^\so)\simeq\Hh(\M',\l_\P^\so|_{\J_\P^\so\cap\M'})$.
\end{lemm}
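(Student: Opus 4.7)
The plan is to mirror the two-step argument used in Lemma~\ref{lem:LneG}: verify that the pair is decomposed over its restriction to~$\M'$, establish an intertwining bound, and then invoke~\cite[Theorem~7.2]{BK1} to obtain simultaneously the cover property and the support-preserving Hecke algebra isomorphism.

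For the decomposed-pair condition, recall that~$\P'=\M'\U'$ was chosen with~$\P\subseteq\P'$ and~$\M\subseteq\M'$. Since~$(\J_\P^\so,\l_\P^\so)$ is already a decomposed pair over~$(\J_\P^\so\cap\M,\l_\M^\so)$, the Iwahori decomposition of~$\J_\P^\so$ with respect to~$(\M,\P)$ automatically gives one with respect to~$(\M',\P')$; moreover, since~$\J_\P^\so\cap\U'\subseteq\J_\P^\so\cap\U$ and~$\l_\P^\so$ restricts to a multiple of the trivial representation on the latter, it also does so on~$\J_\P^\so\cap\U'$. The restriction of~$\l_\P^\so$ to~$\J_\P^\so\cap\M'$ is, by definition, the representation that we are covering.

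For the intertwining bound, the key input is precisely the containment $\I_\G(\l^\so_\P)\subseteq\J^\so_\P\M'\J^\so_\P$ recalled just before the statement of the lemma, which is furnished by~\cite[Corollary~6.16]{S5} together with the fact that~$\G_\E\subseteq\M'$. With this in hand, \cite[Theorem~7.2]{BK1} applies directly and yields both the cover property and the Hecke algebra isomorphism $\Hh(\G,\l^\so_\P)\simeq\Hh(\M',\l^\so_\P|_{\J_\P^\so\cap\M'})$ in one stroke.

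The only potential obstacle is technical: the parabolic~$\P$ fixed at the outset need not be contained in the parabolic~$\P'=\M'\U'$. However, by the remark following Theorem~\ref{thm:cover}, the verification of the cover condition is independent of the choice of parabolic subgroup with Levi component~$\M$, so we are free to replace~$\P$ with a parabolic contained in~$\P'$ before running the above argument. No genuinely new computation arises; the heavy lifting has already been done in the intertwining estimate of~\cite[Corollary~6.16]{S5}.
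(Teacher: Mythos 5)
Your proposal is correct and mirrors the paper's argument exactly: the decomposed-pair property over~$\M'$ follows from that over~$\M\subseteq\M'$, the intertwining containment~$\I_\G(\l^\so_\P)\subseteq\J^\so_\P\M'\J^\so_\P$ from~\cite[Corollary~6.16]{S5} (using~$\G_\E\subseteq\M'$) is the key input, and~\cite[Theorem~7.2]{BK1} then yields both conclusions at once, just as in Lemma~\ref{lem:LneG}. Your remark about possibly having to replace~$\P$ so that~$\P\subseteq\P'$ is also the paper's own caveat.
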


As in previous sections, this reduces us to the case where~$\G_\E$ has 
compact centre.

%%%%%%%%%%%%%%%%%%%%%%%%%%%%%%%%%%%%%%%%%%%%%%%%%%%%%%%%%%%%%%%%%%%%%%%%
\subsection{}\label{SS.nonsplit}
We have finally reduced to the case 
where~$\til\rho_j\simeq\til\rho_{-j}$, for all~$j$ and~$\G_\E$ has 
compact centre; this is exactly the situation
of~\cite[\S7.2.2]{S5}. Moreover, by changing~$\P$ if necessary, we may
assume the parabolic subgroup is the same one as in~\emph{loc.\ cit.}
In~\cite[\S7.2.2]{S5}, two auxiliary~$\oe$-lattice sequences~$\MM_t$,
$t=0,1$, are defined, along with Weyl group
elements~$s_t\in\P(\MM_{t,\oe})$, which we describe below, along with
some auxiliary elements. We
have~$\G_\E=\prod_{i\in\I_0}\G_{\E_i}$ and we will
write~$\I_0=\{1,\ldots,l\}$, to match the notation
of~\cite[\S7.2.2]{S5}; then~$\W^{(m)}\subset\V^\lprime$, with~$1\le \lprime\le l$ 
maximal such that~$\V^\lprime$ contains some~$\W^{(j)}$, and~$\b_i\ne 0$ for~$i>1$.

We put~$\W^{(\lprime,0)}=\V^\lprime\cap\W_0$ and denote
by~$\La^{(\lprime,0)}$ the~$\fo_{\E_\lprime}$-lattice sequence~$\La\cap\W^{(\lprime,0)}$.
Let~$p_\La\in\P(\La^{(\lprime,0)}_{\fo_{\E_\lprime}})$ be an element of order at
most~$2$ such that the
quotient~$\P(\La^{(\lprime,0)}_{\fo_{\E_\lprime}})/\P^\so(\La^{(\lprime,0)}_{\fo_{\E_\lprime}})$
(which has order~$1$ or~$2$) is generated by the image
of~$p_\La$. Then also~$\P(\La^{\lprime}_{\fo_{\E_\lprime}})/\P^\so(\La^{\lprime}_{\fo_{\E_\lprime}})$
is generated by the image of~$p_\La$. We split into cases.
\begin{enumerate}
\item Suppose either that~$\G_{\E_\lprime}$ is~\emph{not} an orthogonal
group, or that~$\dim_{\E_\lprime}\W_m$ is even. Then~$s_0,s_1$ are the
elements denoted~$s_m,s_m^\varpi$ respectively
in~\emph{loc.\ cit.}. Note that~$p_\La$ commutes with both~$s_0$
and~$s_1$.

In this situation, it is straightforward to check, using the
definitions of the elements in~\cite[\S6.2]{S5},
that~$s_t\in\P^\so(\MM_{t,\oe})$ unless~$\E_\lprime/\E_{\lprime,\so}$ is
ramified,~$m$ is odd (in which case~$m=1$) and~$\e= (-1)^t$. Moreover,
if~$s_t\not\in\P^\so(\MM_{t,\oe})$ then either~$p_\La
s_t\in\P^\so(\MM_{t,\oe})$ or else~$\P^\so(\MM_{t,\oe})=\P^\so(\La_{\oe})$,
in which case~$\P(\MM^\lprime_{t,\oe})/\P_1(\MM^\lprime_{t,\oe})$ has the
form~$\O(1,1)(k_{\E_\lprime})\times\Gg$, for~$\Gg$ some product of
connected finite reductive groups,
while~$\P(\La^\lprime_{\oe})/\P_1(\La^\lprime_{\oe})$ has the
form~$\SO(1,1)(k_{\E_\lprime})\times\Gg$.
\item If~$\G_{\E_\lprime}$ is (special) orthogonal (so that~$\E_\lprime=\F$
and~$\e=+1$) and~$\dim_{\F}\W_m$ is odd, the choice of~$\P$ and the
property that~$\til\rho_j\simeq\til\rho_{-j}$, for all~$j$, mean
that~$\lprime=1$ and~$\dim_{\F}\W_k=1$, for all~$k>0$, and there are two cases.
\begin{enumerate}
\item If~$\G^+_{\E_1}\cap\Aut_{\F}(\W_0)\ne 1$, then we
have~$\P^\so(\La_{\oe})/\P_1(\La_{\oe})\simeq\GL_1(k_{\F})\times\Gg_0^\so\times\Gg_1^\so\times\Gg^\so$,
where each~$\Gg_t^\so$ is a special orthogonal group over~$k_{\F}$
(one of which may be trivial) and~$\Gg^\so$ is some product of
connected finite reductive groups. If~$\Gg_t^\so$ is non-trivial, there is an
element~$p_t\in\P^+(\La^{(1,0)})\setminus\P(\La^{(1,0)})$ such
that~$p_t^2=1$, which commutes with both~$s_m,s_m^\varpi$, and whose image
in~$\P^+(\La_{\oe})/\P_1(\La_{\oe})\simeq\GL_1(k_{\F})\times\Gg_0\times\Gg_1\times\Gg$
lies in the orthogonal group~$\Gg_t$ (whose connected component
is~$\Gg_t^\so$). If~$\Gg_t^\so$ is trivial, we put~$p_t=p_{1-t}$; in
any case,~$p_0,p_1$ commute. Moreover, we can assume that~$p_\La=p_0p_1$.

If exactly one of~$p_0,p_1$ normalizes the representation~$\rho_\M^\so$,
viewed as a representation of~$\P^\so(\La_{\oe})$ trivial
on~$\P_1(\La_{\oe})$, then we set~$p$ to be this element; if both or neither
normalize, then we arbitrarily choose~$p$ to be one of
them. Then~$s_0,s_1$ are the elements~$ps_m,ps_m^\varpi$ respectively,
which lie in~$\G_\E$. 

Note that~$s_t\in\P^\so(\MM_{t,\oe})$ precisely when~$\Gg_t^\so$ is
non-trivial and~$p=p_t$. If~$\Gg_t^\so$ is trivial, then~$\P^\so(\MM_{t,\oe})/\P_1(\MM_{t,\oe})\simeq\SO(1,1)(k_{\F})\times\Gg_{1-t}^\so\times\Gg^\so$.
\item Otherwise,~$s_0,s_1$ are the elements
denoted~$s_ms_{m-1},s_m^\varpi s_{m-1}^\varpi$ respectively
in~\cite[\S7.2.2]{S5}. Note that in this
case~$m\ge 2$, since~$\G_{\E_1}$ has compact centre so we cannot
have~$\G_{\E_1}\simeq\SO(1,1)(\F)$.
\end{enumerate}
\end{enumerate}

In all cases but case (ii)(b), we set~$\Y^1=\W_m$,~$\Y^{-1}=\W_{-m}$ and $\Y^0=\sum_{j\ne\pm m}\W_j$; in the exceptional case we set~$\Y^1=\W_m\oplus\W_{m-1}$,~$\Y^{-1}=\W_{-m}\oplus\W_{1-m}$ and~$\Y^0=\sum_{j\ne\pm m,m-1}\W_j$. Denote by~$\M'$ the Levi subgroup stabilizing the decomposition~$\V=\Y^{-1}\oplus\Y^0\oplus\Y^{1}$, and let~$\P'=\M'\U'$ be a parabolic subgroup containing~$\P$. We deal with an easy case first.

\begin{lemm}\label{lem:iiia}
Suppose we are in case (ii)(a) and neither~$p_0$ nor~$p_1$
normalizes~$\rho_\M^\so$. Then the pair
$(\J_\P^\so,\l_\P^\so)$ is a cover
of~$(\J_\P^\so\cap\M',\l_\P^\so|_{\J_\P^\so\cap\M'})$ and there is a
support-preserving isomorphism of Hecke
algebras~$\Hh(\G,\l_\P^\so)\simeq\Hh(\M',\l_\P^\so|_{\J_\P^\so\cap\M'})$.
\end{lemm}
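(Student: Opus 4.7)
The plan is to invoke the standard cover criterion of~\cite[Theorem~7.2]{BK1}, which will simultaneously yield the cover property and the claimed support-preserving isomorphism of Hecke algebras. First I would verify the decomposed-pair condition: since~$\M\subseteq\M'$ one can choose~$\P\subseteq\P'$, so that~$\U'\subseteq\U$ and hence~$\l_\P^\so$, being trivial on~$\J_\P^\so\cap\U$, is a fortiori trivial on~$\J_\P^\so\cap\U'$; together with the Iwahori decompositions already noted this shows that~$(\J_\P^\so,\l_\P^\so)$ is decomposed above~$(\J_\P^\so\cap\M',\l_\P^\so|_{\J_\P^\so\cap\M'})$. It will then remain only to check that the $\G$-intertwining of~$\l_\P^\so$ is contained in~$\J_\P^\so\M'\J_\P^\so$.

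For this, I would first use Lemma~\ref{lem:thetaP}(i): the $\G$-intertwining of~$\th_\P$ is~$\J_\P^1\G_\E\J_\P^1$, so since~$\l_\P^\so$ contains~$\th_\P$, the task reduces to identifying which~$g\in\G_\E$ intertwine~$\l_\P^\so$. As in the proof of~\cite[Corollary~6.16]{S5}, this is controlled by the normalizer of~$\rho_\M^\so$ in the finite reductive quotient of~$\P(\La_\oe)\cap\G_\E$: any double coset~$\J_\P^\so g\J_\P^\so$ with~$g\in\G_\E$ not already lying in~$\J_\P^\so(\G_\E\cap\M')\J_\P^\so$ must be represented by a Weyl element of~$\G_\E$ (outside the relative Weyl group of~$\G_\E\cap\M'$) whose image normalizes~$\rho_\M^\so$.

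Next I would enumerate the candidate Weyl representatives. By the reduction of~\S\ref{SS.LeqGbutsplit} we have~$\til\rho_j\simeq\til\rho_{-j}$ for all~$j$, so the only representatives outside~$\G_\E\cap\M'$ are (products of) the reflections~$s_m,s_m^\varpi$ in~$\G_{\E_1}$ swapping~$\W_m$ with~$\W_{-m}$. In case~(ii)(a), $\G_{\E_1}$ is a special orthogonal group and these reflections have determinant~$-1$, so to produce genuine elements of~$\G_\E$ one must compose with one of the determinant-correcting elements~$p_0,p_1$. Since~$p_0,p_1$ commute with~$s_m$ and~$s_m^\varpi$, and since~$s_m,s_m^\varpi$ act trivially on the~$\W_j$-blocks of~$\rho_\M^\so$ for~$j\ne\pm m$ while acting on the~$\W_m$-block via the identification~$\til\rho_m\simeq\til\rho_{-m}$, one sees that~$p_ts_m$, and likewise~$p_ts_m^\varpi$, normalizes~$\rho_\M^\so$ if and only if~$p_t$ does. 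By the hypothesis that neither~$p_0$ nor~$p_1$ normalizes~$\rho_\M^\so$, no such Weyl element contributes, and hence~$\I_\G(\l_\P^\so)\subseteq\J_\P^\so(\G_\E\cap\M')\J_\P^\so\subseteq\J_\P^\so\M'\J_\P^\so$, which completes the input to~\cite[Theorem~7.2]{BK1}.

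The main obstacle will be the rigorous passage from the heuristic ``Weyl normalizer of~$\rho_\M^\so$'' to an actual intertwining statement for~$\l_\P^\so$ on the pro-$p$-by-finite group~$\J_\P^\so$; this requires combining the Iwahori-decomposed machinery of~\S\ref{S.characters} (in particular the description of~$\k_\P$ as the natural representation on $(\J\cap\U)$-fixed vectors) with an argument along the lines of~\cite[Corollary~6.16]{S5}. A further subtlety is that~$\P(\La_\oe)/\P_1(\La_\oe)$ is non-connected in case~(ii)(a), so one must carefully track the full orthogonal factors~$\Gg_0,\Gg_1$ and their interaction with the Levi decomposition relative to~$\M$ in order to confirm that no additional double coset representatives are hidden in the disconnected components.
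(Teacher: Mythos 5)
Your proposal is correct and follows essentially the same route as the paper: the paper's proof simply cites~\cite[Corollary~6.16]{S5} for the containment~$\I_\G(\l_\P^\so)\subseteq\J_\P^\so\M'\J_\P^\so$ and then concludes ``as in Lemma~\ref{lem:LneG}'' via~\cite[Theorem~7.2]{BK1}, exactly the two ingredients you use. Your discussion of why the hypothesis on~$p_0,p_1$ forces the normalizer of~$\rho_\M^\so$ (hence the relevant Weyl representatives in the connected group~$\G_\E$) into~$\M'$ is precisely the content that makes that corollary applicable here.
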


\begin{proof} 
By~\cite[Corollary~6.16]{S5}, we
have~$\I_\G(\l^\so_\P)\subseteq\J^\so_\P\M'\J^\so_\P$, and the result
follows as usual, as in Lemma~\ref{lem:LneG}.
\end{proof}

Now suppose we are not in the case of Lemma~\ref{lem:iiia}.
For~$t=0,1$, denote by~$\k_t$ a $\b$-extension of~$\eta$ compatible with some
standard~$\b$-extension of~$\J^\so(\b,\MM_t)$. By~\cite[Corollary~6.13]{S5}, we have
\[
\k_t\simeq \Ind_{\J^\so_\P}^{\J^\so} \k_\P\otimes\chi_t,
\]
for some self-dual character~$\chi_t$. We write~$\rho^\so_t=\rho^\so_M\otimes\chi_t^{-1}$, which is still a self-dual cuspidal representation. Moreover, by~\cite[(7.3)]{S5}, there is a support-preserving injective algebra map
\begin{equation}\label{eqn:Heckeinj}
\Hh(\P(\MM_{t,\oe}),\rho_t^\so)\hookrightarrow \Hh(\G,\l_\P^\so),
\end{equation}
where~$\rho_t^\so$ being regarded as a cuspidal representation of~$\P(\La_{\oe})$. By~\cite[Theorem~7.12]{M1}, there is an invertible element in~$\Hh(\P(\MM_{t,\oe}),\rho_t^\so)$ with support~$\P(\La_{\oe})s_t\P(\La_{\oe})$, and we denote by~$T_t$ its image in~$\Hh(\G,\l_\P^\so)$. 

By~\cite[Lemmas~7.11,7.12]{S5}, for a suitable integer~$e$, the element~$(T_0T_1)^e$ is an invertible element of~$\Hh(\G,\l_\P^\so)$ supported on the double coset of a strongly $(\P,\J_\P^\so)$-positive element of the centre of~$\M'$. Indeed, we have:

\begin{prop}[{\cite[Proposition~7.13]{S5}}]
The pair~$(\J_\P^\so,\l_\P^\so)$ is a 
cover of~$(\J_\P^\so\cap\M',\l_\P^\so|_{\J_\P^\so\cap\M'})$.
\end{prop}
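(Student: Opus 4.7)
The plan is to verify the Bushnell--Kutzko criterion for covers~\cite[\S7--8]{BK1}: namely, that $(\J_\P^\so, \l_\P^\so)$ is a decomposed pair above $(\J_\P^\so \cap \M', \l_\P^\so|_{\J_\P^\so \cap \M'})$ with respect to $(\M', \P')$, and that $\Hh(\G, \l_\P^\so)$ contains an invertible element supported on a single double coset $\J_\P^\so z \J_\P^\so$ for some strongly $(\P', \J_\P^\so)$-positive element $z$ of the centre of $\M'$.

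The decomposed pair property is a straightforward consequence of the containment $\P \subseteq \P'$. Indeed, $\U' \subseteq \U$ implies $\J_\P^\so \cap \U' \subseteq \J_\P^\so \cap \U$, so triviality of $\l_\P^\so$ on the latter forces triviality on the former, and the Iwahori decomposition of $\J_\P^\so$ with respect to $(\M, \P)$ rearranges in the usual way into an Iwahori decomposition with respect to $(\M', \P')$.

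For the second condition, the element $(T_0 T_1)^e \in \Hh(\G, \l_\P^\so)$ already supplied by the preceding lemmas is exactly what is needed: by~\cite[Lemmas~7.11,~7.12]{S5}, for a suitable integer $e$, it is invertible and supported on $\J_\P^\so z \J_\P^\so$ for $z$ a strongly $(\P, \J_\P^\so)$-positive element of the centre of $\M'$. Since $\P \subseteq \P'$ and the positive roots with respect to $\P'$ form a subset of those with respect to $\P$, strong positivity with respect to $\P$ implies strong positivity with respect to $\P'$, so the same $z$ witnesses the required positivity, and the cover property follows.

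The main obstacle in this whole chain lies not in the final assembly but in the earlier construction and invertibility analysis of $(T_0 T_1)^e$; in particular, the delicate case~(ii)(a), where the choice of $p \in \{p_0, p_1\}$ normalizing $\rho_\M^\so$ controls whether $s_t$ lies in $\P^\so(\MM_{t,\oe})$, and hence whether the Hecke algebra embedding~\eqref{eqn:Heckeinj} delivers a generator $T_t$ with support on $\P(\La_{\oe}) s_t \P(\La_{\oe})$. Once the two generators $T_0, T_1$ have been produced, the combinatorics of the infinite dihedral group generated by $s_0, s_1$, applied within the affine root system for $\M'$ in $\G_\b$, ensures that a suitable power of $T_0 T_1$ translates by a strongly positive central element, completing the argument.
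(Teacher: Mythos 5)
The paper does not give a standalone proof here: it cites \cite[Proposition~7.13]{S5} outright, having just noted (via \cite[Lemmas~7.11,~7.12]{S5}) that $(T_0T_1)^e$ is an invertible element of $\Hh(\G,\l_\P^\so)$ supported on a strongly positive central double coset. Your argument is exactly the natural unwinding of this citation into the Bushnell--Kutzko cover criterion, using the same invertible element from the same lemmas, plus the (correct) routine observation that strong $(\P,\J_\P^\so)$-positivity of a central element of $\M'$ passes to strong $(\P',\J_\P^\so)$-positivity because $\U'\subseteq\U$ and such an element normalizes $\U'$; so this is essentially the paper's approach.
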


As in previous sections, we have~$\M'=\G^0\times\tG^1$, where~$\G^0=\Aut_\F\Y^0\cap\G$ and~$\tG^1=\Aut_\F\Y^1$, and we write~$\M=\M^0\times\til\M^1$.
Then~$\J_\P^\so\cap\M'=(\J_\P^\so\cap\G^0)\times(\J_\P^\so\cap\tG^1)$ 
and we need to prove:
\begin{itemize}
\item[$\bullet$] $(\J^\so_{\P}\cap\G^0,\l^\so_{\P}|_{\J^\so_{\P}\cap\G^0})$ 
is a cover of~$(\J^\so_{\P}\cap\M^0,\l^\so_{\P}|_{\J^\so_{\P}\cap\M^0})$; and
\item[$\bullet$] $(\J^\so_{\P}\cap\tG^1,\l^\so_{\P}|_{\J^\so_{\P}\cap\tG^1})$ 
is a cover of~$(\J^\so_{\P}\cap\til\M^1,\l^\so_{\P}|_{\J^\so_{\P}\cap\til\M^1})$.
\end{itemize}
Again as previously, the latter is a cover 
by~\cite[Proposition~6.7]{SS6}; it is a simple type. (In fact, except in case~(ii)(b) above, we have~$\til\M^1=\tG^1$.)

The former is again the case of a skew semisimple stratum, but with smaller~$m$. In particular, by repeating the process in this paragraph, we reduce to the case~$m=0$, in which case~$\M^0=\G^0$ and there is nothing left to do.

%%%%%%%%%%%%%%%%%%%%%%%%%%%%%%%%%%%%%%%%%%%%%%%%%%%%%%%%%%%%%%%%%%%%%%%%
%%%%%%%%%%%%%%%%%%%%%%%%%%%%%%%%%%%%%%%%%%%%%%%%%%%%%%%%%%%%%%%%%%%%%%%%
%%%%%%%%%%%%%%%%%%%%%%%%%%%%%%%%%%%%%%%%%%%%%%%%%%%%%%%%%%%%%%%%%%%%%%%%
%%%%%%%%%%%%%%%%%%%%%%%%%%%%%%%%%%%%%%%%%%%%%%%%%%%%%%%%%%%%%%%%%%%%%%%%

\section{Hecke algebras}\label{S.hecke}

In this section we prove Theorem~\ref{thm:maximal} of the introduction:
that is, we describe the Hecke algebra of a cover in the case that
$\tau$ is a cuspidal irreducible representation of a \emph{maximal}
proper Levi subgroup $\M$ of $\G$ (so~$\M$ is the stabilizer of a 
self-dual decomposition~$\V=\W_{-1}\oplus\W_0\oplus\W_1$), up to the 
computation of some parameters. We will also explain how, in principle, 
these parameters can be computed. 

As in the introduction, we 
write~$\ss=[\M,\tau]_\G$ and~$\ss_\M=[\M,\tau]_\M$ and put
\[
\N_\G(\ss_\M) = \{g\in\N_\G(\M) : 
{}^g\tau\hbox{ is inertially equivalent to }\tau\}.
\]
We also put~$\WW_\ss=\N_\G(\ss_\M)/\M$, a subgroup of the  
group~$\N_\G(\M)/\M$ of order~$2$.

We denote by~$(\J_\P,\l_\P)$ the~$\ss$-type constructed in the previous 
section, and put~$\J_\M=\J_\P\cap\M$ and~$\l_\M=\l_\P|_{\J_\P\cap\M}$, so 
that~$(\J_\P,\l_\P)$ is a cover of the~$\ss_\M$-type~$(\J_\M,\l_\M)$.

%%%%%%%%%%%%%%%%%%%%%%%%%%%%%%%%%%%%%%%%%%%%%%%%%%%%%%%%%%%%%%%%%%%%%%%%
\subsection{} Suppose first that~$\N_\G(\ss_\M)=\M$, so 
that~$\WW_\ss$ is trivial. In this case, by~\cite[Theorem~1.5]{BKcover}, 
we have an isomorphism
$$
\Hh(\M,\l_\M) \to \Hh(\G,\l_\P).
$$
Moreover~$\Hh(\M,\l_\M)$ is abelian, isomorphic to~$\CC[X^{\pm 1}]$ and 
the result follows.

%%%%%%%%%%%%%%%%%%%%%%%%%%%%%%%%%%%%%%%%%%%%%%%%%%%%%%%%%%%%%%%%%%%%%%%%
\subsection{}\label{SS.nonsplithecke} 
Now suppose that~$\N_\G(\ss_\M)\ne\M$, so 
that~$\WW_\ss$ has order~$2$. We note first that, in this situation, 
we cannot have a support-preserving 
isomorphism~$\Hh(\M,\l_\M) \to \Hh(\G,\l_\P)$ since 
the induced representation~$\Ind_\P^\G\tau\otimes\chi$ reduces for 
some unramified character~$\chi$ of~$\M$. This implies that we also do not have a support-preserving isomorphism~$\Hh(\M,\l_\M^\so) \to \Hh(\G,\l_\P^\so)$.

We now proceed through the construction of~\S\ref{S.covers} and we use 
all the notation from there. Note that we must have~$\L=\G$, or else we 
would have~$\L=\M$ and Lemma~\ref{lem:LneG} would give us an 
isomorphism~$\Hh(\M,\l_\M^\so) \to \Hh(\G,\l_\P^\so)$. Similarly, we cannot 
be in the situation of~\S\ref{SS.LeqGbutsplit} or~\S\ref{SS.SO11}, by 
Lemmas~\ref{lem:LeqGbutsplit},~\ref{lem:LeqGSO11}. 

Thus we are in the situation of~\S\ref{SS.nonsplit}, whose notation we adopt. Further, we are not in the exceptional case~(ii)(b), since~$\M$ is a maximal Levi subgroup, nor in the case of Lemma~\ref{lem:iiia} since we do not have an isomorphism~$\Hh(\M,\l_\M^\so) \to \Hh(\G,\l_\P^\so)$. The lattice sequence~$\MM_1$ is just the standard lattice sequence~$\MM_{\La}$ used to define the standard~$\b$-extension~$\k$. In particular, the character~$\chi_1$ is trivial. Moreover, as in~\cite[\S2.3]{GKS}, by changing~$\k_0$ if necessary, we may assume that~$\chi_0$ is a quadratic character. 

Recall the element~$p_\La\in\P(\La^{(l,0)}_{\fo_{\E_\lprime}})$ defined
in~\S\ref{SS.nonsplit}: its image generates the
quotient~$\P(\La^{l}_{\fo_{\E_\lprime}})/\P^\so(\La^{l}_{\fo_{\E_\lprime}})$. We
define~$\J_\P^*=\P(\La^{l}_{\fo_{\E_\lprime}})\J_\P^\so$, which
contains~$\J_\P^\so$ with index at most~$2$. We fix~$t\in\{0,1\}$ and split according to 
the cases of~\S\ref{SS.nonsplit}, which we further subdivide.

\begin{enumerate}
\item Suppose either that~$\G_{\E_\lprime}$ is~\emph{not} an orthogonal
group, or that~$\dim_{\E_\lprime}\W_1$ is even.
\end{enumerate}

(a) Assume first that~$s_t\in\P^\so(\MM_{t,\oe})$. We denote by~$\Gg_t$ the connected finite reductive group~$\P^\so(\MM_{t,\oe})/\P_1(\MM_{t,\oe})$, and regard the representation~$\rho_{\M}^\so\chi_t$ as the inflation to the parabolic subgroup~$\Pp_t=\P^\so(\La_{\oe})/\P_1(\MM_{t,\oe})$ of a cuspidal representation of the Levi subgroup~$\P^\so(\La_{\oe})/\P_1(\La_{\oe})$. From~\eqref{eqn:Heckeinj}, we get an injection of Hecke algebras
\[
\Hh(\Gg_t,\rho_{\M}^\so\chi_t)\hookrightarrow \Hh(\G,\l_\P^\so).
\]
The element denoted~$T_t$ in~\S\ref{SS.nonsplit} is the image of an invertible element~$\bar T_t\in\Hh(\Gg_t,\rho_{\M}^\so\chi_t)$ which satisfies a quadratic relation. This quadratic relation is given explicitly (in principle) by~\cite[Theorem~8.6]{L}. By scaling~$T_t$ if necessary, we may assume that the relation takes the form
$$
(T_t-q_t)(T_t+1)=0,
$$
and then, by~\cite[Theorem~8.6]{L}, $q_t$ is a power of~$q_\so$; indeed, by~\cite[Theorem~4.14]{HL}, it can also be described as the quotient of the dimensions of the two irreducible components of~$\Ind_{\Pp_t}^{\Gg_t}\rho_{\M}^\so\chi_t$. 

Now we induce to~$\J_\P^*$. Let~$\l_\P^*$ be an irreducible component
of~$\Ind_{\J_\P^\so}^{\J_\P^*}\l_\P^\so$ contained
in~$\l_\P$. If~$\l_\P^*|_{\J_{\P}^\so}$ is reducible (equivalently,
if~$p_\La$ does not normalize~$\l_\M^\so$)
then~$\l_\P^*\simeq\Ind_{\J_\P^\so}^{\J_\P^*}\l_\P^\so$. Then,
by~\cite[(4.1.3)]{BK}, we have a support-preserving isomorphism 
\[
\Hh(\G,\l_\P^\so)\simeq \Hh(\G,\l_\P^*),
\]
and we denote by~$T_t^*$ the image of~$T_t$ under this isomorphism,
which satisfies the same quadratic relation.

Otherwise,~$\l_\P^*|_{\J_{\P}^\so}$ is irreducible,~$p_\La$
normalizes~$\l_\M^\so$, and~$\Ind_{\J_\P^\so}^{\J_\P^*}\l_\P^\so$ has
two inequivalent irreducible components~$\l_\P^*$ and~$\l_\P'$. We can
identify~$\Hh(G,\l_\P^*)$ and~$\Hh(G,\l'_\P)$ as subalgebras
of~$\Hh(\G,\Ind_{\J_\P^\so}^{\J_\P^*}\l_\P^\so)$, canonically
since~$\Ind_{\J_\P^\so}^{\J_\P^*}\l_\P^\so$ is multiplicity free. 
Note also that~$(\J_\P^*,\l_\P^*)$ is a cover
of~$(\J_\P^*\cap\M,\l_\P^*|_{\J_\P^*\cap\M})$, by~\cite[Lemma~3.9]{M3}, 
and the same applies to~$\l'_\P$. 
Finally, since~$s_t$ normalizes both restrictions~$\l_\P^*|_{\J_\P^*\cap\M}$ 
and~$\l_\P'|_{\J_\P^*\cap\M}$, the image of~$T_t$ under the support-preserving 
isomorphism
\[
\Hh(\G,\l_\P^\so)\simeq \Hh(\G,\Ind_{\J_\P^\so}^{\J_\P^*}\l_\P^\so)
\]
decomposes as~$T_t^*+T'_t$, with~$T_t^*\in\Hh(G,\l_\P^*)$
and~$T'_t\in\Hh(G,\l'_t)$ each satisfying the same relation as~$T_t$.

In either case, when~$s_t\in\P^\so(\MM_{t,\oe})$, we end with an invertible
element~$T_t^*\in\Hh(G,\l_\P^*)$ supported on~$\J_\P^*s_t\J_\P^*$ and satisfying 
a quadratic relation of the required form, with computable parameter~$q_t$.

(b) Now suppose that~$s_t\not\in\P^\so(\MM_{t,\oe})$. If~$p_\La$
normalizes~$\l_\M^\so$ and~$p_\La s_t\in\P^\so(\MM_{t,\oe})$, we can
replace~$s_t$ by~$p_\La s_t$ and argue exactly as in the previous
case to get an element~$T_t^*\in\Hh(G,\l_\P^*)$ as required.

(c) Suppose now that~$p_\La s_t\in\P^\so(\MM_{t,\oe})$ but~$p_\La$ does
not normalize~$\l_\M^\so$. We write~$\P^*(\La_\oe)$ for the group
generated by~$p_\La$ and~$\P^\so(\La_\oe)$, so
that~$\J_\P^*=\P^*(\La_\oe)\J^1_\P$. Then, the quotient
group~$\P^*(\La_\oe)/\P_1(\La_\oe)$ has the
form~$\GL_1(k_{\E_\lprime})\times\Gg_t\times\Gg^\so$, for~$\Gg_t$ some 
orthogonal group over~$k_{\E_\lprime}$ and~$\Gg^\so$ a product of connected
finite reductive groups, while~$\P^\so(\La_\oe)/\P_1(\La_\oe)\simeq
\GL_1(k_{\E_\lprime})\times\Gg^\so_t\times\Gg^\so$, where~$\Gg^\so_t$ is the
connected component of~$\Gg_t$. 

We also set~$\P^*(\MM_{t,\oe})=\P^*(\La_\oe)\P^\so(\MM_{t,\oe})$. Then we 
have~$\P^*(\MM_{t,\oe})/\P_1(\MM_{t,\oe})\simeq
\Gg_{1,t}\times\Gg^\so$, where~$\Gg_{1,t}$ is an orthogonal group
over~$k_{\E_\lprime}$ with Levi subgroup~$\GL_1(k_{\E_\lprime})\times\Gg_t$,
and~$\P^\so(\MM_{t,\oe})/\P_1(\MM_{t,\oe})\simeq
\Gg^\so_{1,t}\times\Gg^\so$, where~$\Gg^\so_{1,t}$, the connected
component of~$\Gg_{1,t}$, is a special orthogonal group
over~$k_{\E_\lprime}$ with Levi subgroup~$\GL_1(k_{\E_\lprime})\times\Gg^\so_t$.

We write the image of~$\P^*(\La_\oe)$ in~$\P^*(\MM_{t,\oe})/\P_1(\MM_{t,\oe})$ 
as~$\Pp_t\times\Gg^\so$, where~$\Pp_t$ is a parabolic subgroup of~$\Gg_{1,t}$ 
with Levi component~$\GL_1(k_{\E_\lprime})\times\Gg_t$. Similarly, we write the image 
of~$\P^\so(\La_\oe)$ as~$\Pp^\so_t\times\Gg^\so$. We have the following picture:
\[
\xymatrix{\Pp^\so_t\times\Gg^\so \ar[r]^{\Ind}\ar[d]_{\Ind} & 
\Gg^\so_{1,t}\times\Gg^\so\ar[d]^{\Ind} \\
\Pp_t\times\Gg^\so \ar[r]_{\Ind} & \Gg_{1,t}\times\Gg^\so
}
\]
Since (the image of)~$p_\La$ does not normalize~$\l_\M^\so$, but does
normalize the~$\b$-extension~$\k_t$, it also does not
normalize~$\rho_\M^\so\chi_t$ and
hence~$\rho_\M^*:=\Ind_{\Pp^\so_t\times\Gg^\so}^{\Pp_t\times\Gg^\so}\rho_\M^\so\chi_t$ 
is irreducible. Similarly, since~$s_t\not\in\P^\so(\MM_{t,\oe})$, the
induced
representation~$\Ind_{\Pp^\so_t\times\Gg^\so}^{\Gg^\so_{1,t}\times\Gg^\so}\rho_\M^\so\chi_t$
is also irreducible. On the other hand, since~$s_t$
intertwines~$\rho_\M^\so\chi_t$, the induced
representation~$\Ind_{\Pp_t\times\Gg^\so}^{\Gg_{1,t}\times\Gg^\so}\rho_\M^*$
is reducible. By restricting back to~$\Gg^\so_{1,t}\times\Gg^\so$, we see
that it must reduce as a direct sum of two inequivalent irreducible
representations of the same dimension. Thus there is an element~$\bar
T_t^*\in\Hh(\Gg_{1,t}\times\Gg^\so,\rho_\M^*)$ satisfying~$(\bar
T_t^*)^2=1$. Finally, by~\cite[(7.3)]{S5}, there is again a 
support-preserving injective algebra map
\[
\Hh(\P(\MM_{t,\oe}),\rho_\M^*)\hookrightarrow \Hh(\G,\l_\P^*),
\]
and we find an invertible element~$T_t^*\in\Hh(G,\l_\P^*)$ satisfying a 
quadratic relation 
$$
(T^*_t-1)(T^*_t+1)=0,
$$
and~$q_t=q_{\so}^0=1$.

(d) Finally, suppose that~$s_t\not\in\P^\so(\MM_{t,\oe})$ but~$p_\La=1$,
so that~$\J_\P^*=\J_\P^\so$. The argument here is very similar. In this case we
have~$\P(\MM_{t,\oe})/\P_1(\MM_{t,\oe})\simeq\O(1,1)(k_{\E_\lprime})\times\Gg$,
for some product of (possibly non-connected) finite reductive groups,
and the image of~$s_t$ lies in~$\O(1,1)(k_{\E_\lprime})$. We denote
by~$\Gg_t$ the non-connected group~$\O(1,1)(k_{\E_\lprime})\times\Gg^\so$,
where~$\Gg^\so$ is the connected component of~$\Gg$. The image~$\Pp_t$
of~$\P^\so(\La_{\oe})$ in~$\Gg_t$
is~$\SO(1,1)(k_{\E_\lprime})\times\Gg^\so$, which is normalized by the image
of~$s_t$. Since image of~$s_t$ normalizes~$\rho_{\M}^\so\chi_t$, the
induced representation~$\Ind_{\Pp_t}^{\Gg_t}\rho_{\M}^\so\chi_t$
decomposes into two pieces of equal dimension and the argument is
exactly as in previous cases, with~$\bar T_t^2=1$. Thus,
letting~$T_t^*=T_t$ be the image of~$\bar T_t$, again we have an invertible
element~$T_t^*\in\Hh(G,\l_\P^*)$ satisfying a quadratic relation
$$
(T^*_t-1)(T^*_t+1)=0,
$$
and~$q_t=q_{\so}^0=1$.

This ends the first case, so we move on to the second.
\begin{enumerate}
\setcounter{enumi}{1}
\item Suppose that~$\G_{\E_\lprime}$ is an orthogonal group and~$\dim_{\E_\lprime}\W_1=1$.
\end{enumerate}
As in case~(i) 
above, there are four possible situations. The details are almost identical to 
those in case~(i) so we omit them.

(a) Suppose first that~$\Gg_t^\so$ is non-trivial and that~$p=p_t$ 
normalizes~$\rho_\M^\so$. In this case~$s_t\in\P^\so(\MM_{t,\oe})$ and the 
argument proceeds exactly as in case~(i)(a) to give~$T_t^*\in\Hh(G,\l_\P^*)$ 
as required. 

(b) Similarly, if~$\Gg_t^\so$ is non-trivial and~$p_t\ne p$ 
normalizes~$\rho_\M^\so$, we can replace~$s_t$ by~$p_\La s_t$ to get the 
same conclusion.

(c) Now suppose~$\Gg_t^\so$ is non-trivial and~$p_t$ does not normalize~$\rho_\M^\so$ 
(in which case~$p_t\ne p$, since~$p$ normalizes~$\l_\M^\so$). In this 
case,~$p_\La=pp_t$ does not normalize~$\rho_\M^\so\chi_t$, 
and we can copy the argument in case~(i)(c) to obtain~$T_t^*\in\Hh(G,\l_\P^*)$ such that~$(T_t^*)^2=1$. 

(d) Finally suppose~$\Gg_t^\so$ is trivial, in which 
case~$\P^\so(\MM_{t,\oe})/\P_1(\MM_{t,\oe})\simeq
\SO(1,1)(k_{\F})\times\Gg_{1-t}^\so\times\Gg^\so
\simeq\P^\so(\La_{\oe})/\P_1(\La_{\oe})$. The argument is now exactly as in 
case~(i)(d).

%%%%%%%%%%%%%%%%%%%%%%%%%%%%%%%%%%%%%%%%%%%%%%%%%%%%%%%%%%%%%%%%%%%%%%%%
\subsection{}%\label{SS.nonsplithecke2}

We continue in the situation of the previous section. In all cases, we have 
two elements~$T^*_t\in\Hh(\G,\l_\P^*)$, supported on~$\J_\P^*s_t\J_\P^*$, 
which satisfy quadratic relations of the required form. The same proof as 
that of~\cite[Th\'eor\`eme~1.11]{BB2} now shows that~$\Hh(\G,\l_\P^*)$ is 
a convolution algebra on~$(\mathsf W,\{s_0,s_1\})$, where~$\mathsf W$ is 
the infinite dihedral group generated by~$s_0,s_1$.

Finally, we must see that the Hecke algebra~$\Hh(\G,\l_\P)$ has the
same form. For this, we revisit the argument of~\cite[Lemma~3.9]{M3},
which was used in deducing that~$(\J_\P,\l_\P)$ is a cover. (In fact,
we will be repeating the argument in some of the cases above.) We note
that we are in a particularly simple situation here,
as~$\J_\P/\J^*_\P$ is a product of cyclic groups of order~$2$. 

Put~$\J^*_\M=\J^*_\P\cap\M$ and~$\l_\M^*=\l_\P^*|_{\J^*_\M}$. Then,
since the difference between~$\J^*_\M$ and~$\J_\M$ is only in the
blocks~$\V^i$ with~$i<l$, the element~$s_t$ normalizes each
irreducible constituent of~$\Ind_{\J_\M^*}^{\J_\M}\l_\M^*$.  

We choose a chain of normal subgroups
\[
\J^*_\P=\K_0\subset\K_1\subset\cdots\subset\K_r=\J_\P,
\]
such that each quotient~$\K_i/\K_{i-1}$ is cyclic of order~$2$. We will 
prove, inductively on~$i$, that, for each irreducible constituent~$\l_i$ 
of~$\Ind_{\J_\P^*}^{\K_i}\l_\P^*$, there is a support-preserving 
Hecke algebra isomorphism
\[
\Hh(\G,\l_\P^*)\simeq \Hh(\G,\l_i).
\]
The case~$i=0$ is vacuous so suppose~$i\ge 1$. 

If~$\l_i|_{\K_{i-1}}$ is reducible
then~$\l_i\simeq\Ind_{\K_{i-1}}^{\K_i}\l_{i-1}$, for some irreducible
constituent~$\l_{i-1}$ of~$\Ind_{\J_\P^*}^{\K_{i-1}}\l_\P^*$. Then,
by~\cite[(4.1.3)]{BK}, we have a support-preserving isomorphism
\[
\Hh(\G,\l_{i-1})\simeq \Hh(\G,\l_i),
\]
and the claim follows by the inductive hypothesis. 

Otherwise,~$\l_{i-1}:=\l_i|_{\K_{i-1}}$ is irreducible
and~$\Ind_{\K_{i-1}}^{\K_i}\l_{i-1}$ has two irreducible
components~$\l_i=\l_i^{(1)}$ and~$\l_i^{(2)}$, which are not equivalent. 
Note that~$(\K_i,\l_i^{(j)})$ is a cover
of~$(\K_i\cap\M,\l_i^{(j)}|_{\K_i\cap\M})$, for~$j=1,2$,
by~\cite[Lemma~3.9]{M3}. We denote by~$T_t^i$ the image of~$T_t^*$
under the support-preserving isomorphism 
\[
\Hh(\G,\l_\P^*)\simeq \Hh(\G,\l_{i-1}) \simeq \Hh(\G,\Ind_{\K_{i-1}}^{\K_i}\l_{i-1})
\]
given by the inductive hypothesis and~\cite[(4.1.3)]{BK}.  We can also
identify each~$\Hh(G,\l_i^{(j)})$ as a subalgebra
of~$\Hh(\G,\Ind_{\K_{i-1}}^{\K_i}\l_{i-1})$, canonically
since~$\Ind_{\K_{i-1}}^{\K_i}\l_{i-1}$ is multiplicity free. Then,
since~$s_t$ normalizes the restrictions~$\l_i^{(j)}|_{\K_i\cap\M}$,
it follows that~$T_t^i=T^{(1)}_t+T^{(2)}_t$,
with~$T^{(j)}_t\in\Hh(G,\l^{(j)}_i)$ satisfying the same relation
as~$T^*_t$. Thus we get a support-preserving
isomorphism~$\Hh(\G,\l_\P^*)\simeq\Hh(\G,\l^{(1)}_i)=\Hh(\G,\l_i)$.
 
In particular, taking~$\l_r=\l_\P$, we deduce that~$\Hh(G,\l_\P)$,
isomorphic to~$\Hh(\G,\l_\P^*)$, as required.

%%%%%%%%%%%%%%%%%%%%%%%%%%%%%%%%%%%%%%%%%%%%%%%%%%%%%%%%%%%%%%%%%%%%%%%%
\subsection{}
This completes the proof of Theorem~\ref{thm:maximal}. Note also that 
the computation of the parameters~$q_i$ then comes down to computing 
the quadratic character~$\chi_0$ of~\S\ref{SS.nonsplithecke} and the 
parameters in the two finite Hecke 
algebras~$\Hh(\Gg_0,\rho_\M^\so\chi_0)$ and~$\Hh(\Gg_1,\rho_\M^\so)$. As mentioned above, these 
parameters can be computed using work of Lusztig~\cite{L}. Examples can 
be found in the work of Kutzko--Morris~\cite{KM} on level zero types for 
the Siegel Levi%, and in~\cite{LS} on level zero types in general
; note 
that, for level zero representations, the~$\b$-extensions are just 
trivial representations so the character~$\chi_0$ is trivial. For positive 
level representations the situation is much more subtle: see for example 
the work of Blondel~\cite{Bl}, which completely describes what happens 
when~$\L=\G$ and~$\V^i=\W_{-1}\oplus\W_1$, for some~$i\in\I_0$%; 
%and~\cite{BHS} which explores the situation when~$\G$ is a symplectic 
%group and $\M\simeq\GL_n(F)\times\Sp_4(\F)$
.

%%%%%%%%%%%%%%%%%%%%%%%%%%%%%%%%%%%%%%%%%%%%%%%%%%%%%%%%%%%%%%%%%%%%%%%%
\subsection{} 
We finish with some remarks on the Hecke algebra of a cover in the general case of a non-maximal Levi subgroup. Firstly, one interesting case is now resolved: if~$\M\simeq\GL_r(\F)^s\times\Sp_{2N}(\F)$ is a Levi subgroup of~$\Sp_{2(N+rs)}(\F)$ and~$\l_\M$ takes the form~$\til\l^{\otimes s}\otimes\l_0$, with~$\til\l$ self-dual cuspidal, then Blondel~\cite{Bl0} has given a description of the Hecke algebra, contingent on a suitable description of the Hecke algebra in the case~$s=1$ (which was already known when~$N=0$). Given Theorem~\ref{thm:maximal}, Blondel's result can now be used in full generality. 

It seems likely that the methods of~\cite{Bl0} could equally well be applied to other classical groups. However, it is not clear to the authors whether the methods used here and in~\cite{Bl0} could together be pushed to allow a description of the Hecke algebra in a completely general case.

%\bibliography{biblioSS}
\def\cprime{$'$}
\providecommand{\bysame}{\leavevmode ---\ }
\providecommand{\og}{``}
\providecommand{\fg}{''}
\providecommand{\smfandname}{\&}
\providecommand{\smfedsname}{\'eds.}
\providecommand{\smfedname}{\'ed.}
\providecommand{\smfmastersthesisname}{M\'emoire}
\providecommand{\smfphdthesisname}{Th\`ese}

\end{document}